  \newcommand{\br}{\mathbb{R}}
  \newcommand{\bz}{\mathbb Z}
  \newcommand{\bn}{\mathbb N}
  \newcommand{\bq}{\mathbb Q}
  \newcommand{\cE}{\mathcal E}
  \newcommand{\vp}{\varphi}
  \newcommand{\ssm}{\smallsetminus}
  \DeclareMathOperator{\mcg}{MCG}
  \DeclareMathOperator{\pmcg}{PMCG}
  \DeclareMathOperator{\Ends}{\cE}
  \DeclareMathOperator{\Homeo}{Homeo}
  \renewcommand{\co}{\colon\thinspace}
  \newtheorem{Thm}{Theorem}[section]
  \newtheorem{Thm*}{Theorem}
  \newtheorem{Prop}[Thm]{Proposition}
  \newtheorem{Lem}[Thm]{Lemma}
  \newtheorem{Cor}[Thm]{Corollary}
  \newtheorem{Cor*}[Thm*]{Corollary}
  \newtheorem{Question}[Thm]{Question}
  \newtheorem*{MainThm}{Theorem~\ref{thm:main}}
  \newtheorem*{MainThm2}{Theorem~\ref{thm:main2}}
  \theoremstyle{definition}
  \newtheorem{Def}[Thm]{Definition}
  \newtheorem*{Ex*}{Examples}
  \newtheorem{Rem}[Thm]{Remark}
  \numberwithin{equation}{section}
  \title{Non-planar ends are continuously unforgettable}
  \author{Javier Aramayona}
  \address{Instituto de Ciencias Matematicas (ICMAT) \\  280149, Madrid, Spain}
  \email{javier.aramayona@icmat.es}
  \author{Rodrigo de Pool}
  \address{Instituto de Ciencias Matematicas (ICMAT) \\ 280149, Madrid, Spain}
  \email{rodrigo.depool@icmat.es}
  \author[Skipper]{Rachel Skipper}
  \address{Department of Mathematics \\ University of Utah \\ Salt Lake City, Utah 84112, USA}
  \email{rachel.skipper@utah.edu}
  \author[Tao]{Jing Tao}
  \address{Department of Mathematics \\ University of Oklahoma \\ Norman, OK 73019, USA}
  \email{jing@ou.edu}
  \author[Vlamis]{Nicholas G.~Vlamis}
  \address{Department of Mathematics \\ CUNY Graduate Center \\ New York, NY 10016, USA and \newline Department of Mathematics \\ CUNY Queens College \\ Flushing, NY 11367, USA}
  \email{nicholas.vlamis@qc.cuny.edu}
  \author[Wu]{Xiaolei Wu}
  \address{Shanghai Center for Mathematical Sciences \\ Jiangwan Campus \\ Fudan University \\ Shanghai, 200438, P.R.~China}
  \email{xiaoleiwu@fudan.edu.cn}
\begin{document}  
  
\begin{abstract} 
We show that continuous epimorphisms between a class of subgroups of mapping class groups of orientable infinite-genus 2-manifolds with no planar ends are always induced by homeomorphisms. 
This class of subgroups includes the pure mapping class group, the closure of the compactly supported mapping classes, and the full mapping class group in the case that the underlying manifold has a finite number of ends or is perfectly self-similar. 
As a corollary, these groups are Hopfian topological groups. 
\end{abstract}

\maketitle

\vspace{-30pt}


\section{Introduction}

A fundamental tool in studying mapping class groups of finite-type surfaces is the forgetful homomorphism: Given a non-compact finite-type surface \( S \) and a point \( p \in S \), there is a surjective homomorphism \( \pmcg(S\ssm\{p\}) \to \pmcg(S) \) obtained by ``forgetting the puncture \( p \)'', where \( \pmcg(S) \) is the pure mapping class group.
For the forgetful homomorphism, the surface in the codomain has one less end than the surface in the domain.
The motivating question of this article is to ask if the same phenomenon can occur in the setting of infinite-genus surfaces without planar ends, or in other words, 
\begin{quote}
\emph{Can you forget a non-planar end?}
\end{quote}
This question appears as Problem 4.45 of \cite{AIM}.
In our main theorem, we give a negative answer to this question under a continuity assumption.

Let \( M \) be an orientable 2-manifold\footnote{We use the term 2-manifold to refer to a surface with empty boundary.}.
The mapping class group of \( M \), denoted \( \mcg(M) \), is the group of isotopy classes of orientation-preserving homeomorphisms \( M \to M \); the pure mapping class group, denoted \( \pmcg(M) \), is the kernel of the action of \( \mcg(M) \) on the end space \( \Ends(M) \) of \( M \). 
We equip \( \mcg(M) \) with the compact-open topology, that is, the quotient topology coming from the compact-open topology on \( \Homeo^+(M) \).  
With this topology, \( \mcg(M) \) is a topological group. 
Throughout the article, subgroups of \( \mcg(M) \) are assumed to be equipped with the subspace topology. 
We let \( \Gamma_c(M) \) denote the subgroup of \( \mcg(M) \) consisting of compactly supported mapping classes (i.e., the mapping classes with a representative that restricts to the identity outside of a compact set), and we let \( \Gamma(M) \) denote the closure of \( \Gamma_c(M) \). 

\begin{Def}
Let \( M \) be an infinite-type 2-manifold, and let \( H \) be a subgroup of \( \mcg(M) \).
\begin{itemize}
\item \( H \) is \emph{large} if it contains \( \Gamma(M) \). 
\item \( H \) is \emph{mostly pure} if \( H \cap \pmcg(M) \) has countable index in \( H \).
\end{itemize}
\end{Def}

For example, \( \pmcg(M) \) and \( \Gamma(M) \) are both large and mostly pure. 
On the other hand, \( \mcg(M) \) is large, but need not be mostly pure; in particular, \( \mcg(M) \) is mostly pure if and only if \( M \) has finitely many ends. 

Let \( M' \) be another orientable 2-manifold, and let \( G \) and \( G' \) be subgroups of \( \mcg(M) \) and \( \mcg(M') \), respectively. 
A homomorphism \( \varphi \co G \to G' \) is \emph{induced by a homeomorphism} if there exists a homeomorphism \( f \co M \to M' \) such that, for any homeomorphism \( g \co M \to M \) representing an element of G,  \( \varphi([g]) = [f \circ g \circ f^{-1}] \), where \( [g] \) denotes the isotopy class of \( g \). 

\begin{MainThm}
\label{thm:main}
Let \( M \) and \( M' \) be orientable 2-manifolds, and suppose \( M \) has infinite genus and has no planar ends.
If \( G \) is a mostly pure large subgroup of \( \mcg(M) \) and \( G' \) is a large subgroup of \( \mcg(M') \), then every continuous epimorphism \( G \to G' \) is induced by a homeomorphism.
\end{MainThm}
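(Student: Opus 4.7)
The plan is to reconstruct a homeomorphism $f \co M \to M'$ from any continuous epimorphism $\varphi \co G \to G'$ and then verify that $\varphi$ agrees with the induced conjugation map. The strategy follows the general template of Ivanov-type rigidity theorems: characterize Dehn twists algebraically inside $G$, transport them via $\varphi$, read off a simplicial isomorphism of appropriate complexes of curves or finite-type subsurfaces, and invoke a rigidity theorem for infinite-type 2-manifolds to upgrade this combinatorial isomorphism to a homeomorphism.

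For the first step, I would give a group-theoretic characterization of Dehn twists about simple closed curves in $G$—for instance, via the structure of their centralizers inside $\Gamma(M) \subseteq G$—and then use continuity of $\varphi$ to force such Dehn twists to be sent to Dehn twists in $G'$. The mostly-pure hypothesis enters by allowing me to restrict first to $G \cap \pmcg(M)$, which has countable index in $G$ and on which no end is permuted, so that the induced curve-to-curve assignment is easier to detect; the general case then follows by a coset argument over the countably many end-permutation classes. Once Dehn twists go to Dehn twists, surjectivity of $\varphi$ together with the large hypothesis on $G'$ yields a simplicial isomorphism between appropriate complexes of curves (or finite-type subsurfaces) of $M$ and $M'$, and an Ivanov-style rigidity theorem for infinite-type 2-manifolds without planar ends then produces a homeomorphism $f \co M \to M'$ realizing this isomorphism. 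Finally, $\varphi$ and conjugation by $f$ agree on $\Gamma_c(M)$ by construction—compactly supported mapping classes are detected by their action on curves in a compact subsurface—and continuity pushes the agreement to $\Gamma(M) = \overline{\Gamma_c(M)}$, from which one extends to all of $G$ using the richness of $\Gamma(M)$ as a normal subgroup of $\mcg(M)$ together with the surjectivity of $\varphi$.

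The main obstacle, and the step that leverages the hypotheses most heavily, is forcing Dehn twists to map to Dehn twists rather than to, say, the identity or to a mapping class that ``forgets'' or ``permutes'' structure near an end. In the finite-type setting the forgetful homomorphism $\pmcg(S \ssm \{p\}) \to \pmcg(S)$ is a continuous epimorphism that kills the Dehn twist about any curve surrounding $p$—precisely because $p$ is a planar end—so ruling out an analog when every end of $M$ is non-planar is the heart of the theorem. I expect this argument to exploit the fact that, in the compact-open topology, Dehn twists supported arbitrarily deep in the end structure of $M$ converge to the identity, so their images in $G'$ must also be small; combining this with a local analysis near a hypothetical ``forgotten'' end should then yield a contradiction, using crucially that the end in question carries infinite genus rather than only being a puncture.
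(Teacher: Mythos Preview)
Your high-level template---Dehn twists to Dehn twists, then a curve map, then a homeomorphism, then check agreement---matches the paper's architecture, and your closing paragraph correctly flags the crux. But several of the mechanisms you propose are not the ones that actually work, and you have misread where ``mostly pure'' enters.

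The paper uses mostly pure exactly once, and not to reduce to $G\cap\pmcg(M)$ for the curve-to-curve step. Its sole job is to rule out $\Gamma_c(M)\subset\ker\varphi$: if that inclusion held, $\varphi$ would factor through $G/\Gamma(M)$; since $\pmcg(M)/\Gamma(M)$ is abelian (Aramayona--Patel--Vlamis), the image of $G\cap\pmcg(M)$ would be a normal abelian subgroup of $G'$, hence trivial, and then countability of $G/(G\cap\pmcg(M))$ forces $\varphi(G)$ to be countable---contradicting surjectivity onto the uncountable $G'$. After this, mostly pure is never used again.

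The Dehn-twist-to-Dehn-twist step is not done by an intrinsic centralizer characterization, and this is the real gap in your sketch. The paper first gets $\varphi(\Gamma_c(M))\subset\Gamma_c(M')$ via the Bavard--Dowdall--Rafi criterion (compactly supported $\Leftrightarrow$ countable conjugacy class, preserved by epimorphisms). Then Bridson's theorem, applied on finite-type subsurfaces, shows $\varphi(t_a)$ is a root of a multitwist; a chain of lemmas exploiting continuity of infinite products plus the lantern relation upgrades this to an honest multitwist, and de~Pool's classification of braided multitwists forces it down to a single non-separating Dehn twist. Finally, no infinite-type curve-complex rigidity is invoked: the homeomorphism is built by hand on a compact exhaustion, applying the finite-type Aramayona--Souto theorem on each piece (using an explicit Alexander chain whose twists generate $\Gamma_c(M)$) and taking a direct limit. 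Your proposed route through curve-complex automorphisms might be made to work, but it is not what the paper does, and it would still need the twist-to-twist step as input.
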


Here, it is worth stressing that, to the best of our knowledge, the analog of Theorem \ref{thm:main} is not known in the case of finite-type surfaces. In light of the forgetful and capping homomorphisms, a possible version of this problem is: 

\begin{Question}
    Let $M$ and $M'$ be finite-type
     surfaces of genus $g$ and $g'$, respectively, where $g\ge 3$ and $g'\ne g$.
   Are there any epimorphisms \( \pmcg(M) \to \pmcg(M') \)?
\end{Question}

Next, recall that an object \( A \) in a category is \emph{Hopfian} if every epimorphism \( A \to A \) is an automorphism. 
It is a standard fact that every finitely generated residually finite group is Hopfian (in the category of abstract groups).
As the mapping class group of a finite-type surface is residually finite (see \cite[\S6.4]{Primer}) and finitely generated (see \cite[\S4.3]{Primer}),  every finitely generated subgroup of the mapping class group of a finite-type orientable 2-manifold is Hopfian.  
One of the motivating problems for this article is to understand when \( \mcg(M) \) and \( \pmcg(M) \) are Hopfian groups. 
As a corollary of Theorem~\ref{thm:main}, we establish a class of big mapping class groups that are Hopfian in the category of topological groups, see Problem 4.5 of \cite{AIM}. 

\begin{Cor}
If \( M \) is an orientable infinite-genus 2-manifold with no planar ends, then every mostly pure large subgroup of \( \mcg(M) \) is a Hopfian topological group;
in particular, \( \pmcg(M) \) is a Hopfian topological group, and if \( M \) has finitely many ends, then so is \( \mcg(M) \).
\qed
\end{Cor}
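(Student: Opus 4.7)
The plan is to deduce the corollary immediately from Theorem~\ref{thm:main} by applying it with $M' = M$ and $G' = G$. For this to be legal, I first check that all hypotheses carry over: $G$ is mostly pure large by assumption, and $G$ is also large when viewed as the codomain $G'$, so the theorem applies. A continuous epimorphism $\varphi \co G \to G$ therefore must be induced by a homeomorphism $f \co M \to M$.

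Once $\varphi$ is known to have the form $\varphi([g]) = [f \circ g \circ f^{-1}]$, verifying that it is an automorphism of topological groups is essentially formal. For injectivity, if $\varphi([g]) = [\mathrm{id}]$, then $f \circ g \circ f^{-1}$ is isotopic to the identity, and pre- and post-composing the isotopy by $f^{-1}$ and $f$ shows that $g$ is as well. For the inverse being continuous, I note that $\varphi^{-1}$ is conjugation by $f^{-1}$, which is continuous because conjugation on $\Homeo^+(M)$ with the compact-open topology is continuous and descends to $\mcg(M)$.

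Finally, for the ``in particular'' clauses, I record that $\pmcg(M)$ is mostly pure (it coincides with its own intersection with $\pmcg(M)$) and large (since $\Gamma(M) \subseteq \pmcg(M)$, as observed in the introduction). If $M$ has finitely many ends, then the paper already notes that $\mcg(M)$ is both large and mostly pure, so the general statement covers the $\mcg(M)$ case in this setting as well.

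The proof presents no real obstacle; the entire content is in Theorem~\ref{thm:main}, and the corollary is just the tautology that a continuous endomorphism realized by conjugation by a self-homeomorphism of $M$ is automatically a bi-continuous bijection.
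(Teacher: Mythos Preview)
Your proposal is correct and matches the paper's approach: the corollary is marked \qed\ in the paper with no further proof, so the authors also regard it as an immediate consequence of Theorem~\ref{thm:main} applied with $M'=M$ and $G'=G$. Your extra verification that conjugation by a self-homeomorphism yields a bi-continuous bijection just spells out what the paper leaves implicit.
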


Our proof of the main theorem relies crucially on continuity in several locations. 
In the final section of this article, we give an accounting of how continuity is used and an a priori weaker condition that suffices for our arguments. 
This weaker condition is motivated by the definition of \emph{infinitely multiplicative homomorphisms} given by Cannon--Conner \cite{CannonCombinatorial}.
We leave as a question whether continuity can be removed all together from the hypotheses in Theorem~\ref{thm:main}. 

\begin{Question}
If \( M \) is a non-compact orientable 2-manifold with no planar ends, is every mostly pure large subgroup of \( \mcg(M) \) a Hopfian  group?
\end{Question}

When discussing forgetting ends, it is natural to work in the setting of pure mapping class groups, but we expect Theorem~\ref{thm:main} to hold more generally for large subgroups. 
In the specific case of generalizing Theorem~\ref{thm:main} to \( \mcg(M) \), we can reduce the problem to asking if there exists an epimorphism \( \Homeo(\Ends(M)) \to G' \), where \( \Ends(M) \) denotes the end space of \( M \). 
We have an approach, based on the work of Afton--Calegari--Chen--Lyman \cite{AftonNielsen}, for answering this question in the negative, but it is not clear if it can be applied to all cases.
In the next theorem, we provide a class of 2-manifolds for which this approach is successful. 

A 2-manifold \( M \) is \emph{perfectly self-similar} if \( M \# M \) is homeomorphic to \( M \) and for every proper compact subset \( K \) of \( M \) there exists \( f \in \Homeo(M) \) such that \( f(K) \cap K = \varnothing \). 

\begin{MainThm2}
Let \( M \) and \( M' \) be orientable 2-manifolds, and suppose \( M \) is infinite genus with no planar ends. 
If \( M \) is perfectly self-similar and \( G' \) is a large subgroup of \( \mcg(M') \), then every continuous epimorphism from \( \mcg(M) \) to \( G' \) is induced by a homeomorphism. 
\end{MainThm2}

In Section~\ref{sec:full}, in addition to proving Theorem~\ref{thm:main2}, we extend the theorem to a larger subclass of self-similar 2-manifolds using \cite{BhatAlgebraic}, as well as give an example of a 2-manifold for which the proof techniques do not apply. 

As a corollary to Theorem~\ref{thm:main2}, we have the Hopfian property. 

\begin{Cor}
If \( M \) is a perfectly self-similar infinite-genus 2-manifold with no planar ends, then \( \mcg(M) \) is a Hopfian topological group. 
\qed
\end{Cor}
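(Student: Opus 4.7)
The plan is to deduce the corollary directly from Theorem~\ref{thm:main2}. Given a continuous epimorphism $\varphi \co \mcg(M) \to \mcg(M)$, I would apply Theorem~\ref{thm:main2} with $M' = M$ and $G' = \mcg(M)$; since $\mcg(M)$ contains $\Gamma(M)$ by definition, it qualifies as a large subgroup of itself in the sense of the paper. The theorem then produces a homeomorphism $f \co M \to M$ with $\varphi([g]) = [f \circ g \circ f^{-1}]$ for every homeomorphism $g$ representing an element of $\mcg(M)$; in other words, $\varphi$ is conjugation by $f$.

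It then remains to check the essentially categorical fact that conjugation by a homeomorphism is a topological automorphism of $\mcg(M)$. I would first verify that the map $g \mapsto f g f^{-1}$ on $\Homeo^+(M)$ is continuous in the compact-open topology: since $M$ is a 2-manifold and hence locally compact Hausdorff, composition is jointly continuous in the compact-open topology, and pre- and post-composition by the fixed maps $f, f^{-1}$ are therefore continuous. This descends to a continuous map on mapping classes, which by the previous paragraph is precisely $\varphi$. The same reasoning applied to $f^{-1}$ gives a continuous inverse, so $\varphi$ is a homeomorphic group isomorphism. Note that the case where $f$ is orientation-reversing causes no issue: the formula $g \mapsto f g f^{-1}$ still carries orientation-preserving homeomorphisms to orientation-preserving homeomorphisms, so conjugation by $f$ defines an automorphism of $\mcg(M)$ regardless of the orientation behavior of $f$.

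Granting Theorem~\ref{thm:main2}, I do not foresee any substantive obstacle: the entire difficulty of the corollary has been absorbed into that theorem, and what remains is the short observation that a continuous endomorphism which coincides with conjugation by a homeomorphism is automatically invertible with continuous inverse. The only point worth double-checking is that $\mcg(M)$ really does count as a large subgroup of itself so that Theorem~\ref{thm:main2} applies in the form needed, and this is immediate from the definition of largeness.
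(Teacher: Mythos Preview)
Your proposal is correct and matches the paper's approach: the paper marks this corollary with \qed\ immediately after stating it, treating it as a direct consequence of Theorem~\ref{thm:main2}, and your argument spells out exactly that deduction (including the routine check that conjugation by a homeomorphism gives a topological automorphism).
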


In the special case that  \( M \) is as in the corollary and the end space of \( M \) is a Cantor set (i.e., \( M \) is the blooming Cantor tree surface), the fifth author \cite{VlamisHomeomorphism} showed that \( \mcg(M) \) has the automatic continuity property, which implies that every endomorphism of \( M \) is continuous.
Therefore,  we can upgrade the corollary from the topological category to the abstract category.

\begin{Cor}
The mapping class group of the blooming Cantor tree surface is Hopfian. 
\qed
\end{Cor}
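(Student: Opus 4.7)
The plan is to combine the preceding corollary with the automatic continuity result of \cite{VlamisHomeomorphism}. The blooming Cantor tree surface \( M \) falls within the hypotheses of that preceding corollary: it is orientable, of infinite genus, has no planar ends, and is perfectly self-similar (its end space is a Cantor set, and any proper compact subsurface can be pushed off of itself by a homeomorphism of \( M \)). Hence, every \emph{continuous} epimorphism \( \mcg(M) \to \mcg(M) \) is already known to be an automorphism, and the task reduces to removing the continuity hypothesis.

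For this, I would first note that, since \( M \) is second countable, \( \Homeo^+(M) \) with the compact-open topology is Polish, and therefore so is the quotient \( \mcg(M) \); in particular, \( \mcg(M) \) is separable. By the main theorem of \cite{VlamisHomeomorphism}, \( \mcg(M) \) has the automatic continuity property, meaning every abstract group homomorphism from \( \mcg(M) \) into a separable topological group is continuous. Applied to an arbitrary abstract epimorphism \( \varphi \co \mcg(M) \to \mcg(M) \), this forces \( \varphi \) to be continuous.

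The preceding corollary then applies to the now-continuous epimorphism \( \varphi \) and produces a homeomorphism of \( M \) inducing it; in particular, \( \varphi \) is an automorphism. This proves that \( \mcg(M) \) is Hopfian in the category of abstract groups. The argument uses no substantive new input beyond assembling the two cited results, and the only point requiring care is verifying separability of the target so that automatic continuity can be invoked; this is immediate from second countability of \( M \). The genuine difficulty, had it not already been handled in \cite{VlamisHomeomorphism}, would be the proof of automatic continuity itself.
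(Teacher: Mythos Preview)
Your proposal is correct and follows essentially the same route as the paper: invoke the preceding corollary to handle continuous epimorphisms, then apply the automatic continuity result from \cite{VlamisHomeomorphism} to drop the continuity hypothesis. Your explicit check that \( \mcg(M) \) is separable (so that automatic continuity applies to endomorphisms) is a detail the paper leaves implicit, but otherwise the arguments coincide.
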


\subsubsection*{Motivation and context}

Motivated by Margulis's Superrigidity for lattices in Lie groups, Ivanov \cite{IvanovAutomorphisms} proved that automorphisms of finite-type mapping class groups are induced by surface homeomorphisms. Since then, there has been a significant amount of work dedicated to classifying homomorphisms between mapping class groups; we refer the reader to the survey article \cite{AS-survey} and the references therein for an overview.

In light of this, a natural question to ask is whether the same types of phenomena occur in the context of big mapping class groups. In this direction, Bavard--Dowdall--Rafi \cite{BavardIsomorphisms} proved the analog of Ivanov's theorem for big mapping class groups, showing that any isomorphism between finite-index subgroups of big mapping class groups is induced by a homeomorphism (we remark that previous partial results in this direction were shown by Patel and the fifth author in \cite{PatelAlgebraic}). On the other hand, Leininger, McLeay and the first author \cite{ALM-cohopf} constructed infinite families of (pure) big mapping class groups that are not {\em co-Hopfian} in the category of topological groups, that is, they admit continuous injective homomorphisms that are not surjective. On the other hand, subject to certain topological restrictions on the surfaces, the same authors proved that if a continuous injective homomorphism sends Dehn twists to Dehn twists, then it is induced by a proper surface embedding \cite[Theorem 3]{ALM-cohopf}.

For more general types of homomorphisms, the situation is even wilder.   
For instance, if \( M \) is an orientable infinite-genus 2-manifold with no planar ends, then \( \pmcg(M) \) admits non-trivial endomorphisms:
If \( M \) has at least two ends, then by a result of the first author, Patel, and the fifth author \cite{AramayonaFirst}, \( \pmcg(M) \) surjects onto \( \bz \), and hence, composing this surjection with a homomorphism \( \bz \to \pmcg(M) \) gives a nontrivial endomorphism.
In the case \( M \) has one end, i.e., \( M \) is the Loch Ness monster surface,  Domat--Dickmann \cite{DomatBig} showed that \( \mcg(M) \) surjects onto \( \bq \). 
Aougab, Patel, and the fifth author \cite{AougabIsometry} showed that \( \mcg(M) \) contains a copy of every countable group; in particular, it contains a copy of \( \bq \), and hence taking the composition \( \mcg(M) \to \bq \to \mcg(M) \) yields a nontrivial endomorphism. 

Finally, it is also not difficult to see that the non-planar hypothesis is necessary.  
If \( M \) is a 2-manifold with infinitely many isolated planar ends, then forgetting a single isolated planar end yields a continuous epimorphism \( \pmcg(M) \to \pmcg(M) \) that is not an isomorphism.  
For the full mapping class group, consider the 2-manifold \( M \) obtained by removing a copy of the ordinal space \( \omega^\omega+1 \) from the 2-sphere. 
The derived set of \( \omega^\omega+ 1 \)  is homeomorphic to \( \omega^\omega +1 \); therefore, forgetting all the isolated ends of \( M \) yields a continuous  epimorphism \( \mcg(M) \to  \mcg(M) \) that is not an isomorphism. 
These examples show that \( \mcg(M) \) and \( \pmcg(M) \) need not be Hopfian when \( M \) has planar ends.
However, despite these examples failing to be induced by a homeomorphism, they do arise from geometric constructions, i.e., by embedding the manifold into itself, which leads to the natural question:

\begin{Question}
Does every (continuous) epimorphism between (pure) mapping class groups come from a geometric construction?
\end{Question}

\subsubsection*{Acknowledgements} 

The authors thank Greg Conner for a helpful discussion about infinitely multiplicative homomorphisms, and we thank the anonymous referee for their careful reading and helpful suggestions, which undoubtedly improved the exposition. 

This article stems from discussions during the {\it Thematic Program on Geometric Group Theory and Low-Dimensional Topology} held at ICMAT during May--July 2024, which was part of ICMAT's {\it Agol Laboratory}. The authors thank ICMAT for its hospitality and acknowledge the support
from the Spanish Ministry of Science, Innovation, and Universities, through the Severo
Ochoa Programme for Centres of Excellence in R\&D (CEX2019-000904-S and CEX2023-
001347-S).

Aramayona was funded by grant  PGC2018-101179-B-I00 from the Spanish Ministry for Science, Innovation and Universities. 
De Pool was partially supported by the grant CEX2019-000904-S funded by MCIN/AEI/ 10.13039/501100011033 and from the grant PGC2018-101179-B-I00. 
Skipper was partially funded by NSF DMS-2005297 and the European Research Council (ERC) under the European Union’s Horizon 2020 research and innovation program (grant agreement No.725773).
Tao was partially funded by NSF DMS-1651963. 
Vlamis was partially supported by NSF  DMS-2212922 and PSC-CUNY awards 65331-00 53, 67380-00 54, and 67380-00 55. 
Wu was partially supported by NSFC No.12326601.


\section{Preliminaries} 
\label{sec:prelim}

\subsection{Surfaces and curves}

All \emph{surfaces} are assumed to be connected, Hausdorff, and second countable, and we will reserve the term \emph{2-manifold} to refer to a surface without boundary. 
A surface is \emph{planar} if it can be embedded in \( \br^2 \). 
A subsurface of a surface $S$ is a closed subset of $S$ that is a surface with respect to the subspace topology. We will usually consider subsurfaces up to isotopy. 
The \emph{space of ends} of $S$ is \[ \Ends(S) =  \lim_{\leftarrow} \pi_0(S\smallsetminus K), \] where the inverse limit is taken over the collection of all compact subsets $K \subset S$. 
Equipping $\pi_0(S \smallsetminus K)$ with the discrete topology, the limit topology on $\Ends(S)$ is totally disconnected, separable, and metrizable. 
An element $e \in \Ends(S)$ is called an \emph{end} of $S$.
An end is \emph{planar} if it is the end of a planar subsurface of \( S \).

A 2-manifold is of \emph{finite type} if it is homeomorphic to the interior of a compact surface (equivalently, its fundamental group is finitely generated); otherwise, it is of \emph{infinite type}. 
The classification of surfaces (see \cite{RichardsClassification}) says that the homeomorphism type of an orientable 2-manifold is determined by the following data: its genus (possibly infinite) and, up to homeomorphism, its space of ends together with the closed subset of its set of non-planar ends.

A simple closed curve on a surface $S$ is \emph{essential} if it does not bound a disk or a once-punctured disk and it is not homotopic to a boundary component.
We will use the term \emph{curve} to refer to the isotopy class of an essential simple closed curve. 
Given two curves $a$ and $b$, their \emph{geometric intersection number}, denoted \( i(a,b) \), is defined to be \( \min_{\alpha,\beta} |\alpha \cap \beta| \), where $\alpha$ and $\beta$ are representatives of $a$ and $b$, respectively. Two curves are \emph{disjoint} if $i(a,b)=0$. 

A \emph{multicurve} is a set of pairwise-disjoint curves. 
Two multicurves \( A \) and \( B \) are \emph{disjoint} if \( A \cap B = \varnothing \) and \( i(a,b) = 0 \) for all \( a \in A \) and for all \( b \in B \). 
A set of curves \( A \) is \emph{locally finite} if, given a curve \( b \), all but finitely many curves in \( A \) are disjoint from \( b \).

A \emph{pants decomposition} of a surface \( S \) is a locally finite multicurve \( P \) such that each component of $S \ssm P$ is homeomorphic to the thrice-punctured sphere (i.e., a \emph{pair of pants}). 
A surface admits a pants decomposition if and only if its fundamental group is non-abelian and every boundary component (if there are any) is compact. 

On several occasions, we will have to consider arcs in addition to curves.
We will only discuss arcs in the context of 2-manifolds, as opposed to general surfaces.
A \emph{simple proper arc} in a 2-manifold \( M \) refers to the image of a proper embedding \( \br \to M \); it is \emph{essential} if it does not bound an open disk in \( M \). 
We will use the term \emph{arc} to refer to the isotopy class of an essential simple proper arc. 
The definition of the geometric intersection number extends to the setting of arcs and curves. 

\subsection{Mapping class groups and their topology}

Given a surface $S$, the \emph{mapping class group}  of $S$ is the group $\mcg(S,\partial S)$ of isotopy classes of orientation-preserving homeomorphisms of $S$ fixing $\partial S$ pointwise, with isotopies also fixing $\partial S$ pointwise. 
If $\partial S = \varnothing$, we will shorten the notation to $\mcg(S)$.
The group $\mcg(S,\partial S)$ naturally inherits the quotient topology coming from the compact-open topology on the homeomorphism group of $S$. 

It will be useful to have a more workable description of this topology; this involves using a corollary to the Alexander method (see \cite[\S2.3]{Primer} for finite-type surfaces and see \cite{HernandezAlexander} or \cite{ShapiroAlexander} for infinite-type surfaces). 

\begin{Prop}
\label{prop:alexander-method}
Let \( M \) be a 2-manifold of genus at least three, and let \( f \in \mcg(M) \).
If  \( f(c) = c \) for every curve \( c \) in \( M \), then \( f \) is the identity. 
\qed
\end{Prop}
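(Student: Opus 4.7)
The plan is to reduce to the classical finite-type Alexander method (the corollary stated in \cite[\S2.3]{Primer}) via an exhaustion argument; this is precisely the idea behind the infinite-type extensions attributed to \cite{HernandezAlexander} and \cite{ShapiroAlexander}.

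First I would fix a representative $\tilde f \in \Homeo^+(M)$ of $f$, together with an exhaustion $\Sigma_1 \subset \Sigma_2 \subset \cdots$ of $M$ by compact connected subsurfaces such that each $\Sigma_n$ has genus at least three and every component of $\partial\Sigma_n$ is essential in $M$. Such an exhaustion exists because $M$ has genus at least three. The hypothesis that $f$ fixes the isotopy class of every curve in $M$ applies, in particular, to the finitely many disjoint, pairwise non-isotopic boundary curves of $\Sigma_n$. A standard change-of-coordinates argument realizing a finite disjoint multicurve simultaneously by an ambient homeomorphism then lets us isotope $\tilde f$ to a homeomorphism $\tilde f_n$ satisfying $\tilde f_n(\Sigma_n)=\Sigma_n$.

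Next I would apply the finite-type Alexander method to $\tilde f_n|_{\Sigma_n}$. Choose a finite filling collection $\mathcal C_n$ of essential simple closed curves on $\Sigma_n$ that remain essential in $M$ and that satisfy the combinatorial hypotheses (pairwise minimal intersection, no triple points, filling) needed to invoke \cite[\S2.3]{Primer}. Each curve in $\mathcal C_n$ is a curve of $M$, hence fixed by $f$; the bigon criterion applied inside $\Sigma_n$ upgrades this to the statement that $\tilde f_n|_{\Sigma_n}$ fixes every element of $\mathcal C_n$ up to isotopy in $\Sigma_n$. The finite-type Alexander method then forces $\tilde f_n|_{\Sigma_n}$ to be isotopic to $\mathrm{id}_{\Sigma_n}$ relative to $\partial\Sigma_n$.

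The main obstacle is assembling these local isotopies into a single global isotopy of $\tilde f$ to $\mathrm{id}_M$. I would proceed inductively: having constructed the isotopy on $\Sigma_n$, I would choose $\Sigma_{n+1}$ so that $\Sigma_n$ lies in the interior of $\Sigma_{n+1}$ with a collar neighborhood of $\partial\Sigma_n$ entirely inside $\Sigma_{n+1}\ssm\Sigma_n$, and then run the Alexander method on $\Sigma_{n+1}$ relative to the isotopy already fixed on a smaller collar. Concatenating the telescoping family produces a continuous path from $\tilde f$ to $\mathrm{id}_M$ in $\Homeo^+(M)$ with the compact-open topology, since on every compact subset of $M$ the path is eventually constant. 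The delicate point here, and the technical heart of the infinite-type Alexander method, is verifying that the compatibility of the nested isotopies can be arranged so that the limit is a well-defined homeomorphism at each time and depends continuously on the time parameter, even near the ends of $M$.
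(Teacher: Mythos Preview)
The paper does not give a proof of this proposition; it records it as a known corollary of the Alexander method, citing \cite[\S2.3]{Primer} for the finite-type case and \cite{HernandezAlexander}, \cite{ShapiroAlexander} for the infinite-type extension, and closes with \qed. So there is no argument in the paper to compare against beyond those references.

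Your outline follows the right overall strategy (exhaust and invoke the finite-type statement), but there is a genuine gap at the step where you assert that ``the finite-type Alexander method then forces $\tilde f_n|_{\Sigma_n}$ to be isotopic to $\mathrm{id}_{\Sigma_n}$ relative to $\partial\Sigma_n$.'' On a compact surface with nonempty boundary, simple closed curves alone never fill: the complement of any curve system contains peripheral annuli, and a Dehn twist about a component of $\partial\Sigma_n$ fixes every isotopy class of curve in $\Sigma_n$ while being nontrivial in $\mcg(\Sigma_n,\partial\Sigma_n)$. The Alexander method in \cite[\S2.3]{Primer} for bounded surfaces uses arcs in addition to curves precisely to detect such twists. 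From your hypotheses you can only conclude that $\tilde f_n|_{\Sigma_n}$ is a product of boundary Dehn twists, not that it is trivial rel $\partial\Sigma_n$; your inductive gluing in the last paragraph then has nothing to glue.

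The repair is not hard but is missing from your write-up: choose the exhaustion so that every component of $\partial\Sigma_n$ is interior and essential in $\Sigma_{n+1}$, and use curves of $M$ lying in $\Sigma_{n+1}$ that cross $\partial\Sigma_n$ (which $f$ also fixes) to kill the boundary-twist ambiguity on $\Sigma_n$. Only after that extra step does the telescoping isotopy argument go through; this is exactly the sort of care taken in the cited infinite-type references.
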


The genus restriction above is to rule out the finite number of 2-manifolds for which the statement fails.
This is exactly when the mapping class group has nontrivial center, which is also why it is stated for 2-manifolds rather than surfaces. 

The Alexander method implies that \( \mcg(M) \) acts faithfully on the set of curves in \( M \), and therefore, a mapping class is completely determined by the permutation it induces on the set of curves. 
Given a set of curves \( A \), set \[ U_A = \{ f \in \mcg(M) : f(a) = a \text{ for all } a \in A \}. \]
Then the \( U_A \) form a neighborhood basis for identity in the compact-open topology.
It follows that the sets of the form \( f \, U_A = \{ g \in \mcg(M) : g(a) = f(a) \text{ for all } a \in A \} \) form a basis for the compact-open topology (see \cite[\S4]{AramayonaBig}). 
A similar statement can be made if \( \partial S \neq \varnothing \) by allowing the set \( A \) to include arcs.

\subsection{Twists}

We will usually denote the (left) Dehn twist about a curve $a$ by $t_a$. 
If \( a \) is non-separating, then we say \( t_a \) is a non-separating Dehn twist. 
We record two standard facts about Dehn twists. 
Their proofs in \cite{Primer}  are in the setting of finite-type surfaces but readily extend to general surfaces. 

\begin{Lem}[\!\!{\cite[Fact~3.7]{Primer}}]
\label{lem:fact37}
Let \( a \) be a curve on a surface \( S \).
If \( f \in \mcg(S, \partial S) \), then \( f \, t_a \, f^{-1} = t_{f(a)} \). 
\qed
\end{Lem}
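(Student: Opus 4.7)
The plan is to prove this by direct manipulation of representative homeomorphisms, verifying that conjugating a twist by $f$ produces a homeomorphism whose support is an annular neighborhood of $f(a)$ and whose restriction to that annulus is a twist. The key observation is that a Dehn twist about a curve $c$ is characterized (up to isotopy) as any mapping class admitting a representative supported in a regular annular neighborhood $N$ of $c$ whose restriction to $N$ is orientation-preservingly conjugate to the standard twist on $S^1 \times [0,1]$.

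First, I would fix a representative homeomorphism $\varphi$ of $f$ and a representative $T_a$ of $t_a$ with $\supp(T_a) \subset N$, where $N$ is a closed regular annular neighborhood of a representative of $a$. Since $\varphi$ is a homeomorphism of $S$ fixing $\partial S$ pointwise, the image $\varphi(N)$ is a closed regular annular neighborhood of $\varphi(a)$, a representative of $f(a)$. A short computation shows that $\varphi T_a \varphi^{-1}$ is the identity outside $\varphi(N)$, since for any $x \notin \varphi(N)$ we have $\varphi^{-1}(x) \notin N$, hence $T_a \varphi^{-1}(x) = \varphi^{-1}(x)$, and thus $\varphi T_a \varphi^{-1}(x) = x$.

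Next, I would check that the restriction of $\varphi T_a \varphi^{-1}$ to $\varphi(N)$ has the correct ``shape'' to be a left Dehn twist. If $\psi \co S^1 \times [0,1] \to N$ is an orientation-preserving parametrization under which $T_a|_N$ corresponds to the standard left twist $(\theta,s) \mapsto (\theta + 2\pi s, s)$, then $\varphi \circ \psi \co S^1 \times [0,1] \to \varphi(N)$ is an orientation-preserving parametrization of $\varphi(N)$ under which $\varphi T_a \varphi^{-1}|_{\varphi(N)}$ corresponds to the same standard twist. Here I would use that $\varphi$ is orientation-preserving, so that the sense of twisting is preserved. Therefore $\varphi T_a \varphi^{-1}$ is a representative of $t_{f(a)}$, yielding the claimed equality in $\mcg(S,\partial S)$.

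The main potential obstacle is purely bookkeeping: one must verify that the characterization of a Dehn twist in terms of an annular neighborhood and a twist parametrization is well-defined on isotopy classes, independent of the choice of representative of the curve and of the annulus. This is standard (a different annular neighborhood of the same curve is isotopic to the original through annular neighborhoods, and any two orientation-preserving twist parametrizations of an annulus yield isotopic twist maps), so the argument goes through without any real difficulty. The orientation-preserving hypothesis on $f$ is essential, since an orientation-reversing homeomorphism would conjugate a left twist to a right twist.
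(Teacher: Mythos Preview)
Your argument is correct and is the standard proof of this fact. Note, however, that the paper does not actually prove this lemma: it is stated with a \qed and attributed to \cite[Fact~3.7]{Primer}, so there is no proof in the paper to compare against. Your direct verification via representative homeomorphisms---showing that $\varphi T_a \varphi^{-1}$ is supported in $\varphi(N)$ and that its restriction there is orientation-preservingly conjugate to the standard twist---is exactly the argument one finds in the reference, and your remarks about the orientation-preserving hypothesis and the well-definedness on isotopy classes are on point.
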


\begin{Lem}[\!{\cite[Fact~3.9]{Primer}}]
Two curves \( a \) and \( b \) in an orientable surface \( S \) are disjoint if and only if \( t_a(b) = b \). 
\qed
\end{Lem}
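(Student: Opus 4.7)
The plan is to prove both implications directly, reducing the infinite-type statement to the classical finite-type one: any two curves $a$ and $b$ are contained in a compact finite-type subsurface $S_0 \subset S$, and the Dehn twist $t_a$ admits a representative supported in an annular neighborhood of $a$, so whether $t_a(b) = b$ can be decided inside $S_0$. Throughout I use that $i$ and $t_a$ are isotopy invariants and that curves are taken up to isotopy.

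For the ``only if'' direction, I choose representatives $\alpha \in a$ and $\beta \in b$ with $\alpha \cap \beta = \varnothing$, then take a regular annular neighborhood $N$ of $\alpha$ small enough to be disjoint from $\beta$. The standard model of $t_a$ is supported in $N$, so this representative restricts to the identity on $\beta$; hence $t_a(b) = [\beta] = b$.

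For the ``if'' direction, I argue the contrapositive: assume $i(a,b) \ge 1$ and show $t_a(b) \ne b$. Put $\alpha$ and $\beta$ in minimal position and let $N \cong S^1 \times [0,1]$ be an annular neighborhood of $\alpha$. Then $\beta \cap N$ is a disjoint union of $n = i(a,b)$ arcs, each (up to isotopy in $N$) of the form $\{x_i\} \times [0,1]$. Realizing $t_a$ as the standard twist on $N$ and the identity outside, each such arc is replaced by an arc that wraps once around the $S^1$-factor before reaching the other boundary component of $N$. A direct count in $N$ shows that $t_a(\beta)$ meets $\beta$ in exactly $n^2$ points. The key step is to verify that this configuration contains no bigons with $\beta$, so by the bigon criterion the count realizes $i(t_a(b), b) = n^2 \ge 1$; in particular $t_a(b) \ne b$.

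The main obstacle is the no-bigon verification in the final step. The idea is that an innermost bigon between $t_a(\beta)$ and $\beta$ would, after pulling back through $t_a^{-1}$ on $N$, produce a bigon between $\alpha$ and $\beta$, contradicting the minimal position assumption. Once this is in hand, everything else is a local computation inside the annulus $N$, and the reduction to a finite-type subsurface ensures no subtleties arise from the infinite-type setting.
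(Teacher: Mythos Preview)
The paper does not give its own proof of this lemma: it is stated with a citation to \cite[Fact~3.9]{Primer} and an immediate \qed. Your proposal therefore supplies an argument where the paper supplies none, and the route you take is essentially the one in the cited source --- the forward direction is immediate from a supported representative of $t_a$, and the reverse direction computes $i(t_a(b),b)=i(a,b)^2$ via the bigon criterion.

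One small point: your sketch of the no-bigon step (``pulling back through $t_a^{-1}$ on $N$ would produce a bigon between $\alpha$ and $\beta$'') is not quite how the verification goes, and as written it does not obviously work --- applying $t_a^{-1}$ to a bigon between $t_a(\beta)$ and $\beta$ just gives a bigon between $\beta$ and $t_a^{-1}(\beta)$, which is the same situation by symmetry. The standard argument instead builds an explicit representative of $t_a(\beta)$ from $n$ arcs parallel to $\alpha$ together with a surgered copy of $\beta$, and then checks by hand that none of the $n^2$ intersection points with $\beta$ bounds a bigon: outside $N$ the two curves coincide, and inside $N$ the local model is a family of parallel arcs crossing a family of spiraling arcs, where bigons are ruled out directly. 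This is a local correction to your plan rather than a genuine gap in the overall strategy.
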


It is an exercise in the compact-open topology to show that if \( A \) is a locally finite multicurve containing infinitely many curves, \( \{ k_a\}_{a\in A} \) is a sequence of integers, and \( \{a_n\}_{n\in\bn} \) is an enumeration of the elements of \( A \), then \[ \lim_{n \to \infty} \prod_{j=1}^n t_{a_n}^{k_{a_n}} \] exists and does not depend on the choice of enumeration;  we write this limit as  \( \prod_{a\in A} t_a^{k_a} \). 
For a locally finite multicurve $A$, a \emph{multitwist} along $A$ is a mapping class of the form \( \prod_{a\in A} t_a^{k_a} \), where \( k_a \in \bz \). 
For simplicity, we will usually write $T_A$ for a multitwist along $A$.
We say a mapping class $h\in \mcg(\Sigma)$ is a \emph{root of a multitwist} if $h^m$ is a multitwist for some integer $m$.

\begin{Lem} 
\label{lem infinite twists}
Let \( A = \{ a_n\}_{n\in\bn} \) be a multicurve on a surface \( S \), and let \( \{ k_n \}_{n\in \bn} \) be a sequence of non-zero integers. 
Then \( \lim_{n\to\infty} \prod_{j=1}^n t_{a_n}^{k_n} \) exists if and only if $A$ is locally finite.
\end{Lem}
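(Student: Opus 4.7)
\emph{Proof plan.} The forward direction (locally finite $\Rightarrow$ convergence) is precisely the exercise stated in the paragraph preceding the lemma, so the task is to establish the converse: if $A$ is not locally finite, then the sequence $f_n := \prod_{j=1}^n t_{a_j}^{k_j}$ fails to converge in $\mcg(S)$. Recall from the description of the compact-open topology that a basic neighborhood of $g \in \mcg(S)$ has the form $g\,U_B$ with $B$ a finite set of curves; consequently, $f_n \to f$ in $\mcg(S)$ only if, for every curve $c$, the value $f_n(c)$ is eventually equal to $f(c)$. It therefore suffices to exhibit a single curve $b$ for which $\{f_n(b)\}_{n\in\bn}$ is not eventually constant.

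Assume $A$ is not locally finite and pick a curve $b$ with $i(a_n, b) > 0$ for infinitely many $n$; let $n_1 < n_2 < \cdots$ enumerate those indices. Since $f_{n+1} = f_n \cdot t_{a_{n+1}}^{k_{n+1}}$, one has
\[
f_{n+1}(b) \;=\; f_n\bigl(t_{a_{n+1}}^{k_{n+1}}(b)\bigr).
\]
When $i(a_{n+1}, b) = 0$, the disjointness criterion $t_a(b) = b \iff i(a,b) = 0$ recorded above gives $t_{a_{n+1}}^{k_{n+1}}(b) = b$, and hence $f_{n+1}(b) = f_n(b)$. When $n+1 = n_k$, I will argue that $t_{a_{n_k}}^{k_{n_k}}(b) \neq b$; then, since $f_{n_k-1}$ is a homeomorphism, $f_{n_k}(b) \neq f_{n_k - 1}(b)$. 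Thus $\{f_n(b)\}$ changes value precisely at the indices $n_k$, so it is not eventually constant and the limit fails to exist.

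The only input beyond the material already recorded is the claim that $t_a^k(b) \neq b$ whenever $i(a,b) > 0$ and $k \neq 0$. This follows from the standard twist-intersection formula $i(t_a^k(b), b) = |k| \cdot i(a,b)^2$ (valid for $k \neq 0$ when $i(a,b) > 0$), which forces $i(t_a^k(b), b) > 0$ and hence $t_a^k(b) \neq b$. This is really the only nontrivial ingredient; the remainder of the argument is bookkeeping in the compact-open topology, and is what I expect to be routine once the twist-intersection inequality is in hand.
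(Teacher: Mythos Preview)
Your argument is correct and follows the same overall strategy as the paper's proof: assume $A$ is not locally finite, pick a curve that infinitely many $a_n$ meet, and show that the partial products do not stabilize on that curve. The execution differs slightly. The paper passes to the subsequence $T_m = \prod_{j=1}^{n_m} t_{a_j}^{k_j}$, forms the increments $F_m = T_m^{-1}T_{m+1}$, and shows $F_m(c)\neq c$ by introducing an auxiliary curve $b_m$ with $i(c,b_m)=0$ and $i(a_{n_{m+1}},b_m)>0$ and invoking the lower bound of \cite[Proposition~3.4]{Primer}. You instead track $f_n(b)$ along the full sequence and use the exact formula $i(t_a^k(b),b)=|k|\,i(a,b)^2$ from \cite[Proposition~3.2]{Primer} to see that $f_{n_k}(b)\neq f_{n_k-1}(b)$ at each index where $a_{n_k}$ meets $b$. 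Your route is a bit more economical, avoiding both the auxiliary curves and the passage to increments; the paper's route has the minor advantage of packaging the non-convergence as ``$F_m\not\to\mathrm{id}$'', which connects cleanly to continuity of the group operations. Either way the substance is the same.
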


\begin{proof}
We already established the backwards direction above, so we only need to show the forwards direction. 
We will argue the contrapositive: let \( A \) fail to be locally finite. 
Then there exists a curve \( c \) and a subsequence of \( \{ a_{n_j} \} \) such that \( i(c, a_{n_j}) \neq 0 \) for all \( j \in \bn \). 
Let \( T_m =  \prod_{j=1}^{n_m} t_{a_{j}}^{k_{j}} \), and let \( F_m = T_m^{-1} T_{m+1} \). 
Note that if \( \lim_{n\to\infty} \prod_{j=1}^n t_{a_n}^{k_n} \) exists, then the sequence \( \{ T_m \}_{m\in \bn} \) converges, and hence \( \{ F_m \}_{m\in \bn} \) converges to the identity. 
We will show that the sequence \( \{ F_m \} \) does not converge to the identity, which will finish the proof. 

Observe that \( F_m(c) = t_{a_{n_{m+1}}}^{k_{n_{m+1}}}(c) \).
Then, by \cite[Proposition~3.2]{Primer}, 
\[
i(F_m(c), c) = i\left(t_{a_{n_{m+1}}}^{k_{n_{m+1}}}(c), c\right) = |k_{n_{m+1}}| i(a_{n_{m+1}}, c)^2
\]
and hence \( i(F_m(c), c) > 0 \).
It follows that  \( F_m(c) \neq c \) for all \( m \in \bn \), and in particular, there exists a neighborhood  of the identity in \( \mcg(S) \) that the sequence \( \{ F_m \}_{m\in \bn} \) never enters, namely \( \{ f \in \mcg(S) : f(c) = c \} \). 
Therefore,  the sequence \( \{ F_m\} \) does not converge to the identity. 
\end{proof}

From the above lemmas, we can readily deduce the following, using a limiting argument in the case of infinite multitwists: 

\begin{Lem} \label{lem:disjoint-commute}
Let \( S \) be an orientable surface, and let $T_A, T_B\in \mcg(S,\partial S)$ be non-trivial multitwists along the locally finite multicurves $A$ and $B$, respectively. Then $T_A$ and $T_B$ commute if and only if $i(a,b)=0$ for every $a\in A$ and $b\in B$. 
\qed
\end{Lem}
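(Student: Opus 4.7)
My plan is to treat the two directions separately. For the backward direction, I would assume $i(a,b)=0$ for every $a\in A$ and $b\in B$. Then by the preceding lemma $t_b(a)=a$ for every such pair, and Lemma~\ref{lem:fact37} gives $t_a t_b t_a^{-1}=t_{t_a(b)}=t_b$, so $t_a$ and $t_b$ commute. Consequently every finite partial product of $T_A$ commutes with every finite partial product of $T_B$; passing to the limit via continuity of multiplication in the topological group $\mcg(S,\partial S)$ then yields $T_AT_B=T_BT_A$.

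For the forward direction, I would argue by contradiction, assuming $T_AT_B=T_BT_A$ and $i(a_0,b_0)>0$ for some $a_0\in A$ and $b_0\in B$. Since $B$ is a multicurve containing $b_0$, continuity gives $T_B(b_0)=b_0$. By local finiteness of $A$, the set $A_{b_0}:=\{a'\in A:i(a',b_0)>0\}$ is finite, contains $a_0$, and consists of pairwise disjoint curves, so $T_A(b_0)=G(b_0)$ for the finite multitwist $G:=\prod_{a'\in A_{b_0}} t_{a'}^{k_{a'}}$. Since $G$ fixes each curve in $A_{b_0}$, iterating yields $T_A^m(b_0)=G^m(b_0)$, and applying $T_A^m T_B=T_BT_A^m$ to $b_0$ gives $T_B(y_m)=y_m$ for every $m\in\bn$, where $y_m:=G^m(b_0)$.

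The crucial ingredient is the classical fact that a non-trivial finite multitwist along pairwise disjoint curves with non-zero exponents fixes no curve meeting any of its twist curves. Reducing $T_B$ to the finite multitwist $\prod_{b'\in B_{y_m}} t_{b'}^{\ell_{b'}}$ along the finite set $B_{y_m}:=\{b'\in B:i(b',y_m)>0\}$, this forces $B_{y_m}=\varnothing$, so each $y_m$ is disjoint from every curve in $B$; in particular $i(y_m,b_0)=0$. The same fact applied to $G^k$ shows $G^k(b_0)\neq b_0$ for every non-zero $k$, so the $y_m$ are pairwise distinct. Since elements of $\mcg(S,\partial S)$ preserve intersection numbers, $i(y_m,y_n)=i(G^{m-n}(b_0),b_0)=0$ for $m\neq n$, giving an infinite family $\{y_m\}_{m\in\bn}$ of pairwise disjoint, pairwise non-isotopic curves. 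But $G$ is supported in a regular neighborhood of $A_{b_0}$, so every $y_m$ lies in a fixed compact finite-type subsurface of $S$; this contradicts the fact that such a subsurface admits only finitely many pairwise disjoint non-isotopic essential curves.

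The main obstacle I anticipate is the classical multitwist-fixing fact invoked above. I would prove it by iteratively applying the intersection bound $i(t_a^k(c),d)\ge|k|\,i(a,c)\,i(a,d)-i(c,d)$ (from Primer Proposition 3.4, cited in the proof of Lemma~\ref{lem infinite twists}) to powers of the multitwist, or alternatively via Teichm\"uller-theoretic length-function arguments using Fenchel--Nielsen coordinates.
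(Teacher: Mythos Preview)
Your argument is correct. The paper does not actually prove this lemma—it is stated with a \qed and the remark that it can be ``readily deduce[d]'' from the preceding lemmas ``using a limiting argument in the case of infinite multitwists''—so there is no detailed proof to compare against. Your backward direction is exactly that limiting argument. For the forward direction, your route (reduce $T_A$ to the finite multitwist $G$ along $A_{b_0}$, deduce $T_B(y_m)=y_m$ for $y_m=G^m(b_0)$, and manufacture infinitely many pairwise disjoint non-isotopic curves trapped in a finite-type subsurface) is sound, and you correctly isolate the one substantive ingredient the paper suppresses: that a nontrivial finite multitwist fixes no curve crossing one of its twist curves. The cleanest proof of that fact is the multitwist intersection inequality $i(T(c),c)\geq\sum_i|k_i|\,i(a_i,c)^2$, the natural extension of the single-twist bound from \cite[Proposition~3.4]{Primer} already invoked in Lemma~\ref{lem infinite twists}, which yields it at once; your proposed iteration of the single-twist bound also works but requires more bookkeeping.
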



\section{Generating compactly supported mapping classes}

A mapping class is \emph{compactly supported} if it has a representative that restricts to the identity outside a compact set. 
Given a 2-manifold \( M \), let \( \Gamma_c(M) \) be the subgroup of \( \mcg(M) \) consisting of compactly supported mapping classes.
In the proof of our main theorem, we will need to have a generating set for \( \Gamma_c(M) \) that is analogous to the Humphries generating set in the finite-type setting.
We separate this goal into its own section as it may be of independent interest. 

Let us begin by recalling the Humphries generating set (see \cite[\S4.4.3 and \S4.4.4]{Primer}). 
Humphries's theorem tells us that the Dehn twists about the curves shown in Figure~\ref{fig:humphries} generate \( \mcg(S, \partial S) \); in particular, for a compact surface with nonempty boundary, the Humphries generating set consists of \( 2g+b \) non-separating Dehn twists (the figure shows a genus three surface with three boundary components, but it should be clear how to generalize to higher genus and more boundary components).
We note that Humphries's theorem requires the surface to be orientable and nonplanar. 
We will build on Humphries's theorem to construct a generating set for \( \Gamma_c(M) \) when \( M \) is an orientable infinite-genus 2-manifold without planar ends.

\begin{figure}
\centering
\includegraphics[scale=0.75]{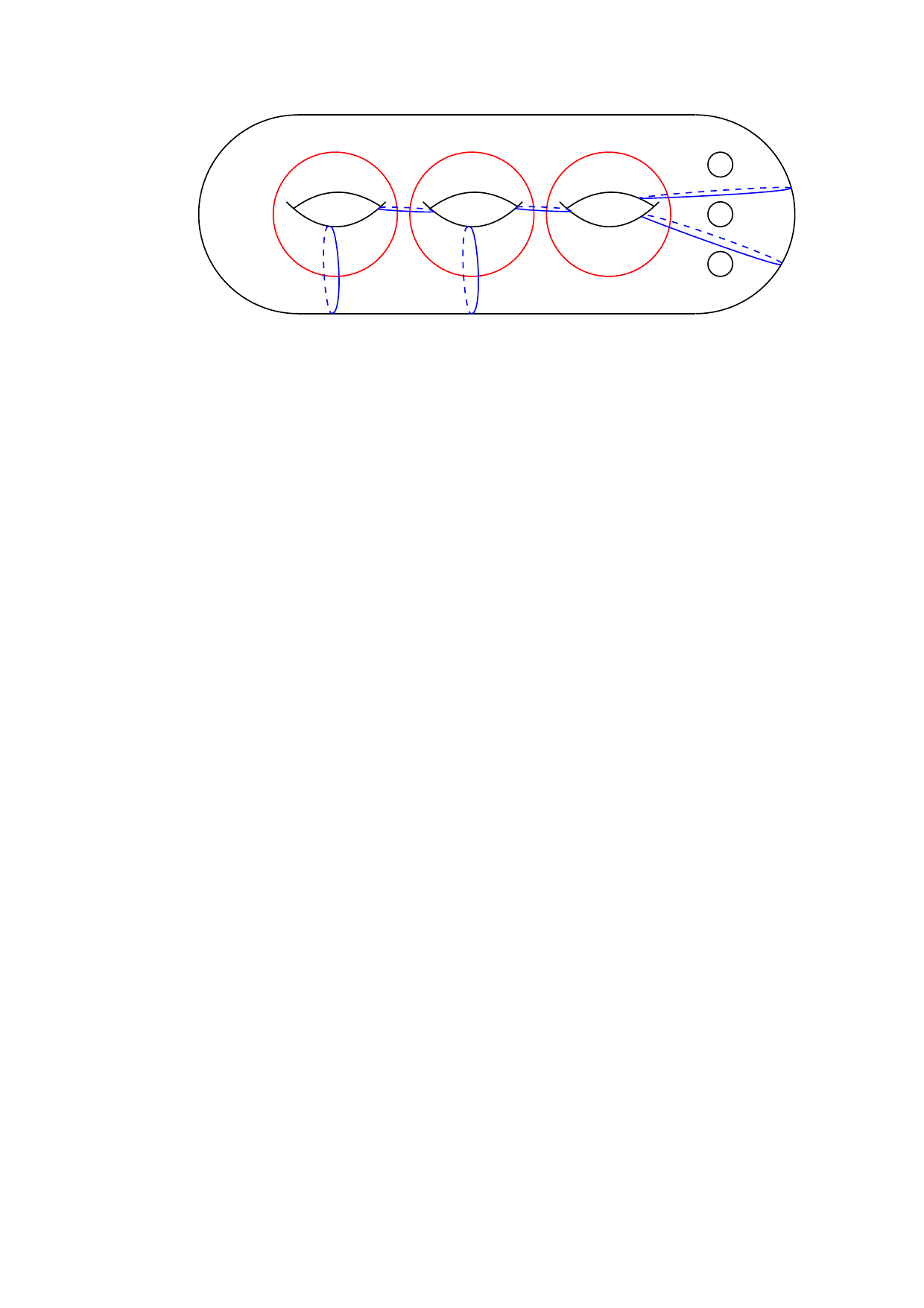}
\caption{The Dehn twists about the curves shown generate \( \pmcg(S) \), where \( S \) is the surface shown.  Here, \( S \) is a genus three surface with three boundary components.}
\label{fig:humphries}
\end{figure}

\begin{Lem}
\label{lem:exhaustion}
Let \( M \) be an orientable infinite-genus 2-manifold.
If \( M \) has no planar ends, then there exists a sequence of compact subsurfaces \( \{ \Sigma_n\}_{n\in\bn \cup\{0\}} \) in \( M \) such that 
\begin{enumerate}[(i)]
\item  \( \Sigma_0 \) is a genus one surface with one boundary component,
\item each component of \( \partial \Sigma_n \) is separating, 
\item \( \Sigma_n \subset \Sigma_{n+1} \), 
\item \( M = \bigcup_{n\in\bn} \Sigma_n \), and 
\item the closure of \( \Sigma_{n+1} \ssm \Sigma_n \) is a genus one surface with either  two or three boundary components.
\end{enumerate}
\end{Lem}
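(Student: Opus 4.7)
The plan is to build the exhaustion inductively: start with $\Sigma_0$ cut off as a single handle inside a large auxiliary compact subsurface, then at each subsequent step attach exactly one elementary genus-one piece of the required shape, using a diagonal scheme against an ambient exhaustion of $M$ to guarantee $\bigcup_n\Sigma_n=M$.

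For the base case, I would first fix an auxiliary compact exhaustion $\{L_k\}_{k\geq 0}$ of $M$ with $L_k\subset L_{k+1}^\circ$ and every boundary component of every $L_k$ separating in $M$ (arranged by standard methods). After reindexing, I may assume $L_0$ has genus at least two, so $L_0$ contains a separating simple closed curve $c_0$ cutting off a genus-one subsurface $\Sigma_0$ with $\partial\Sigma_0=c_0$. The curve $c_0$ is separating in $M$ because $M\ssm\Sigma_0=(L_0\ssm\Sigma_0^\circ)\cup(M\ssm L_0^\circ)$ is connected: the first piece is connected (it is the other component after $c_0$ cuts $L_0$), and every component of $M\ssm L_0^\circ$ meets it through $\partial L_0$.

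For the inductive step, suppose $\Sigma_n$ has been constructed with every boundary component separating in $M$. I would pick a boundary component $\beta$ of $\Sigma_n$ and let $R_\beta$ denote the closure of the component of $M\ssm\Sigma_n^\circ$ adjacent to $\beta$; since $M$ has no planar ends and $\beta$ separates, $R_\beta$ has infinite genus. I would then build $P\subset R_\beta$ as a regular neighborhood (taken inside $R_\beta$) of the union of an arc from $\beta$ to a basepoint $p\in R_\beta^\circ$ with two simple closed curves in $R_\beta^\circ$ meeting once at $p$, together with a collar of $\beta$. An Euler-characteristic count shows $P$ is a torus with two holes, one hole equal to $\beta$. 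For the three-boundary variant, I would enlarge the graph by an additional arc from $p$ ending on a simple closed curve in $R_\beta^\circ$ chosen to split the ends of $R_\beta$ according to a prescribed clopen partition; the analogous count then gives a torus with three holes. I would arrange the graph so that $R_\beta\ssm P^\circ$ is connected in the two-boundary case or decomposes into exactly two pieces with the desired end partition in the three-boundary case; the new boundary components of $P$ are then separating in $M$ by a direct repeat of the connectedness argument from the base case. Setting $\Sigma_{n+1}=\Sigma_n\cup P$ preserves properties (i)--(iii) and (v).

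To secure property (iv), I would run a diagonal against $\{L_k\}$: at each stage $n$, let $k$ be the smallest index with $L_k\not\subset\Sigma_n$, pick a boundary component $\beta$ of $\Sigma_n$ meeting $L_k$, and choose between the two- and three-boundary versions of $P$ so as to include a handle of $L_k\cap R_\beta$ (when one is available) and push the new boundary components of $P$ past $L_k\cap R_\beta$. Since each $L_k$ has finite genus and finitely many boundary components, only finitely many such extensions are required to absorb it. The main technical obstacle is the local construction of $P$: one must simultaneously produce a genus-one piece with exactly two or three boundary components, each separating in $M$ and aligned with the chosen partition of the ends of $R_\beta$, while also making verifiable progress on the ambient exhaustion. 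Everything else is routine bookkeeping, and the whole construction rests on the single key observation that every $R_\beta$ has infinite genus because $M$ has no planar ends, so handles and separating curves are always available where they are needed.
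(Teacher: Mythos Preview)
Your approach is essentially correct but genuinely different from the paper's. The paper's sketch works top-down via the classification of surfaces: it chooses a rooted tree $T$ whose end space is homeomorphic to $\Ends(M)$, with the root of valence one and all other vertices of valence two or three, and then manufactures a surface by gluing one genus-one building block $F_v$ per vertex according to the adjacency relation in $T$. The classification theorem certifies that the result is homeomorphic to $M$, and a breadth-first enumeration of the vertices gives the nested sequence $\Sigma_n=\bigcup_{i\le n}F_{v_i}$ with properties (i)--(v) read off directly from the tree structure (in particular, separating boundary comes for free since every edge of a tree disconnects it).

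Your construction works bottom-up inside a fixed copy of $M$: you inductively glue on a single genus-one piece $P$ at a chosen boundary circle, using the key fact that each complementary region $R_\beta$ has infinite genus (no planar ends) to guarantee the handle and the desired separating boundary curves exist. This is a perfectly valid alternative. What the paper's approach buys is that all of the bookkeeping you flag as ``the main technical obstacle''---making $R_\beta\ssm P^\circ$ have the right number of components, aligning the three-holed pieces with clopen partitions of the ends, and ensuring the diagonal against $\{L_k\}$ actually terminates on each $L_k$---is absorbed into the single invocation of the classification theorem. Your diagonal scheme is the part that would need the most care to make rigorous: a single attachment of a genus-one $P$ cannot swallow $L_k\cap R_\beta$ when the latter has genus at least two, so ``push the new boundary components of $P$ past $L_k\cap R_\beta$'' cannot be taken literally; you need to quantify progress (e.g., by the complexity of $L_k\ssm\Sigma_n^\circ$) and show it strictly decreases. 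This can be done, but the tree-and-classification route avoids it entirely.
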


\begin{proof}[Sketch of proof]
Let \( T \) be a rooted tree whose end space is homeomorphic to the end space of \( M \) and such that the valence at the root is one but at all other vertices is either two or three (such a tree can be obtained by appropriately pruning an infinite binary tree). 
For \( i \in \{2,3\} \), let \( F_i \) be a genus one surface with \( i \) boundary components.  
We can then  build a surface using \( T \) as a blueprint as follows: if \( v \) is the root of \( T \), let \( F_v \) be a genus one surface with one boundary component, and for every other vertex \( v \) in \( T \), let \( F_v \) be a copy of \( F_i \), where \( i \) is the valence of \( v \). 
Take the disjoint union of all the \( F_v \) and glue them along their boundaries according to the adjacency relation in \( T \). 
By the classification of surfaces, the resulting surface is homeomorphic to \( M \).
The vertices of a binary tree have a canonical enumeration (in base two), inducing an order on its vertices. 
As \( T \) is obtained from pruning a binary tree, its vertices inherit an enumeration \( \{ v_i \}_{i \in \bn} \), labeled by the order in which they appear.
The surfaces of the form \( \Sigma_n = \bigcup_{i=0}^n F_{v_i} \) yield the desired sequence. 
\end{proof}

A \emph{chain} of curves in a surface is a locally finite set of curves such that any two curves in the set have geometric intersection number at most one; a chain is \emph{filling}  if every curve and arc on the surface has nontrivial geometric intersection with at least one curve in the chain. 
Associated to any chain \( \mathcal C \), we can define a graph \( \mathcal T(\mathcal C) \) as follows: the vertices are the curves in the chain, and two vertices are adjacent if their geometric intersection is one. 
If \( \mathcal T(\mathcal C) \) is a tree, we say the chain is \emph{tree-like}. 
An \emph{Alexander chain} is a filling tree-like chain consisting of non-separating curves.
Observe that the curves in Figure~\ref{fig:humphries} form an Alexander  chain.

In what follows, we will require a generating set for the mapping class group of a four-holed sphere.  
Let \( R \) be a compact four-holed sphere.  
If \( a \) and \( b \) are curves with \( i(a,b) = 2 \), then \( t_a \) and \( t_b \) together with the Dehn twists along any three of the four boundary components of \( R \) generate \( \mcg(R, \partial R) \) (this is an exercise combining the computation of generators for the mapping class group of a pair of pants \cite[\S3.6.4]{Primer} and a four-punctured sphere \cite[\S4.2.4]{Primer}, the inclusion homomorphism \cite[Proposition~3.18]{Primer}, and the lantern relation \cite[Proposition~5.1]{Primer}). 

Given a compact subsurface \( \Sigma \) of a 2-manifold \( M \) such that each component of \( \partial \Sigma \) is separating and essential, the inclusion of \( \Sigma \) into \( M \) induces a monomorphism \( \mcg(\Sigma, \partial \Sigma) \to \mcg(M) \), see \cite[Theorem 3.18]{Primer}. 
Abusing notation, we will identify \( \mcg(\Sigma, \partial \Sigma) \) with its image. 

\begin{figure}[t]
\centering
\includegraphics[scale=0.85]{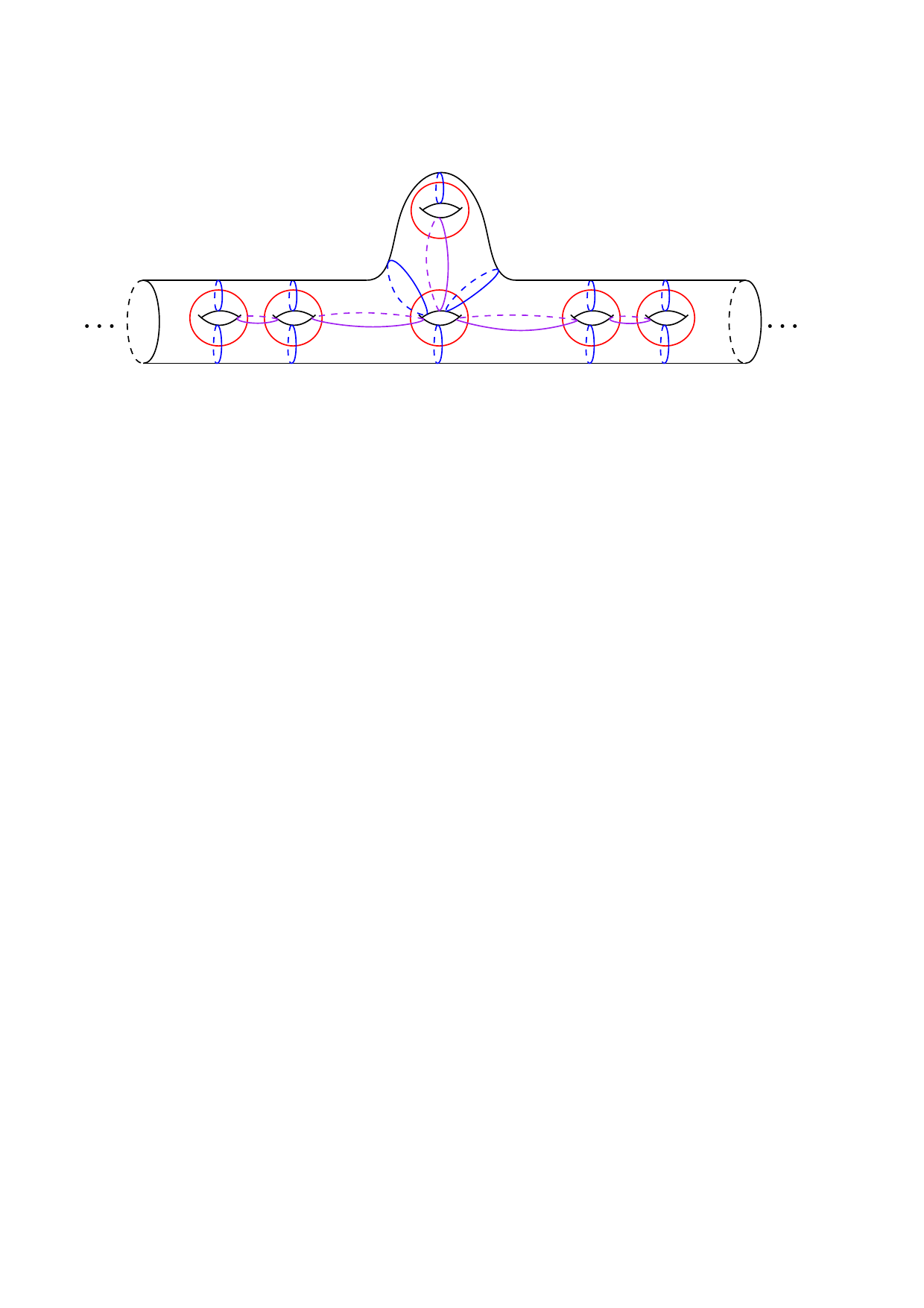}
\caption{The Alexander chain constructed in Theorem~\ref{thm:generators} on a 2-ended infinite-genus 2-manifold.}
\label{fig:alexander-chain}
\end{figure}

\begin{Thm}
\label{thm:generators}
Let \( M \) be an orientable infinite-genus 2-manifold.
If \( M \) has no planar ends, then there exists an Alexander chain \( \mathcal A \) in \( M \) such that the set of Dehn twists \( \{ t_a \}_{a\in \mathcal A} \) generates \( \Gamma_c(M) \).
Moreover, there exists an exhaustion of \( M \) by compact surfaces \( \{ \Sigma_n \}_{n\in\bn} \) such that each component of \( \partial \Sigma_n \) is separating and essential, such that \( \mathcal A_n = \{ a \in \mathcal A: a \subset \Sigma_n \} \) is a an Alexander chain in \( \Sigma_n \), and such that the set \( \{ t_a : a \in \mathcal A_n \} \) generates \( \mcg(\Sigma_n, \partial \Sigma_n) \). 
\end{Thm}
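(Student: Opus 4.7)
The plan is to build the chain $\mathcal{A}$ and the exhaustion $\{\Sigma_n\}$ simultaneously, using the rooted tree $T$ from the proof of Lemma~\ref{lem:exhaustion} as a blueprint. Recall that $M$ is assembled from genus-one pieces $F_v$, indexed by the vertices $v$ of $T$ and having one, two, or three boundary components, glued to adjacent pieces along curves corresponding to edges of $T$. For each vertex $v$, I will fix a pair of non-separating curves $\alpha_v, \beta_v$ in the genus of $F_v$ with $i(\alpha_v, \beta_v) = 1$. For each edge $e = vw$ of $T$, with corresponding shared boundary $c_e$, I will fix a non-separating curve $\eta_e \subset F_v \cup F_w$ that crosses $c_e$ twice, meets $\alpha_v$ and $\alpha_w$ once each, and is disjoint from every other chosen curve; concretely, $\eta_e$ can be built from an arc in $F_v$ joining two points of $c_e$ and crossing $\alpha_v$ once, an analogous arc in $F_w$, and two short arcs in a neighborhood of $c_e$. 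Set $\mathcal{A} = \{\alpha_v, \beta_v\}_v \cup \{\eta_e\}_e$ and $\mathcal{A}_n = \{a \in \mathcal{A} : a \subset \Sigma_n\}$.

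Verifying that each $\mathcal{A}_n$ is an Alexander chain in $\Sigma_n$ is then routine. Local finiteness follows because each curve is supported in one or two adjacent pieces and $T$ is locally finite. The intersection graph $\mathcal{T}(\mathcal{A}_n)$ is a tree, obtained from the subtree of $T$ cut out by the pieces inside $\Sigma_n$ by inserting $\eta_e$ in the interior of each edge and attaching a leaf $\beta_v$ at each vertex. The filling property holds because any essential arc or curve on $\Sigma_n$ enters some $F_v$ and meets $\alpha_v \cup \beta_v$. Finally, each $\eta_e$ is non-separating by the standard observation that a simple closed curve meeting a non-separating curve exactly once must be non-separating.

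The substantive step is an induction on $n$ showing that $\langle t_a : a \in \mathcal{A}_n \rangle = \mcg(\Sigma_n, \partial \Sigma_n)$. The base case $n = 0$ is Humphries's theorem applied to the one-holed torus $\Sigma_0$. For the inductive step, write $\Sigma_{n+1} = \Sigma_n \cup F$, where $F = F_w$ is the newly added piece glued along the curve $c = c_{vw}$, and use the inclusion-induced monomorphisms from \cite[Theorem 3.18]{Primer}. Since $c$ separates $\Sigma_{n+1}$ into two topologically distinct subsurfaces, a standard cutting argument shows that every element of $\mcg(\Sigma_{n+1}, \partial \Sigma_{n+1})$ equals, after multiplication by a power of $t_c$, a product of an element of $\mcg(\Sigma_n, \partial \Sigma_n)$ and an element of $\mcg(F, \partial F)$. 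As $t_c$ is a boundary twist of $\Sigma_n$, it already lies in $\langle \mathcal{A}_n \rangle$ by the inductive hypothesis; hence it suffices to prove that the new twists (those of curves in $\mathcal{A}_{n+1} \setminus \mathcal{A}_n$), together with twists available from $\mathcal{A}_n$, generate $\mcg(F, \partial F)$.

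I expect this last step to be the main obstacle. For the valence-two case $F \cong S_{1,2}$, the handle twists $t_{\alpha_w}, t_{\beta_w}$ together with $t_c$ generate $\mcg(F, \partial F)$ via the standard chain relation in a one-holed torus neighborhood of $\alpha_w \cup \beta_w$, which is used to recover the remaining boundary twist. For the valence-three case $F \cong S_{1,3}$ (arising at branching vertices of $T$), the argument is more intricate: I will invoke the four-holed sphere generating set recalled before the theorem together with the lantern relation, positioning a lantern inside $F$ so that three of its four boundary twists are expressible in $\langle \mathcal{A}_n \rangle \cup \{t_{\alpha_w}, t_{\beta_w}, t_{\eta_{vw}}\}$, and thereby expressing the twists along the two new boundary components of $\Sigma_{n+1}$ as products of available chain twists. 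Once this is in place, the theorem follows: $\mathcal{A} = \bigcup_n \mathcal{A}_n$ generates $\Gamma_c(M)$ because every compactly supported mapping class is supported in some $\Sigma_n$ and hence lies in $\mcg(\Sigma_n, \partial \Sigma_n) = \langle t_a : a \in \mathcal{A}_n \rangle$.
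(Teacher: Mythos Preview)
Your construction has a genuine gap: the chain $\mathcal A_n$ is not filling in $\Sigma_n$ once a valence-three piece appears, and consequently the twists about $\mathcal A_n$ cannot generate $\mcg(\Sigma_n,\partial\Sigma_n)$. Concretely, suppose $\Sigma_{n+1}=\Sigma_n\cup F_w$ with $F_w$ a three-holed torus, glued along $c=c_{vw}$, and let $\partial_1,\partial_2$ be the two new boundary components of $\Sigma_{n+1}$. The curve $\gamma\subset F_w$ that bounds a pair of pants with $\partial_1$ and $\partial_2$ is essential in $\Sigma_{n+1}$ and disjoint from $\alpha_w,\beta_w,\eta_{vw}$, and from every curve of $\mathcal A_n$ (these live in $\Sigma_n$). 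So $\gamma$ misses all of $\mathcal A_{n+1}$; hence $\mathcal A_{n+1}$ is not an Alexander chain, and every element of $\langle t_a:a\in\mathcal A_{n+1}\rangle$ fixes $\gamma$, so this subgroup is proper. A curve count confirms this: already for $\Sigma_1$ with $v_1$ of valence three one has $(g,b)=(2,2)$, Humphries requires $2g+b=6$ twists, but your chain supplies only five.

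The ``standard cutting argument'' in your inductive step is also incorrect as stated. Any product $g_1g_2$ with $g_1\in\mcg(\Sigma_n,\partial\Sigma_n)$ and $g_2\in\mcg(F,\partial F)$ preserves the isotopy class of $c$, as does any power of $t_c$; hence the decomposition you describe would force every element of $\mcg(\Sigma_{n+1},\partial\Sigma_{n+1})$ to fix $c$, which is false. The element $t_{\eta_{vw}}$ does move $c$, but your argument never explains how this single twist, together with the $c$-stabilising pieces, suffices to reach all of $\mcg(\Sigma_{n+1},\partial\Sigma_{n+1})$; and by the previous paragraph it cannot, because everything you have fixes $\gamma$. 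The paper's proof avoids both issues by placing a richer chain $C_i$ in each piece $F_i$ (so that the blue curves give a pants decomposition of the piece) and then inserting one additional curve $a_n$ into each four-holed sphere complementary region; the inductive step then produces the Humphries curves $b_i$ crossing $c$ via the four-holed-sphere generating set and the lantern relation, rather than via a decomposition along $c$.
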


\begin{proof}
Let \( \{\Sigma_n\}_{n\in\bn} \) be the compact exhaustion of \( M \) given by Lemma~\ref{lem:exhaustion}. 
Observe that \[ \Gamma_c(M) = \bigcup_{n\in\bn} \mcg(\Sigma_n, \partial \Sigma_n). \]
We therefore need to give an Alexander chain \( \mathcal A \) such that the Dehn twists about the curves in \( \mathcal A \) contained in \( \Sigma_n \) generate \( \mcg(\Sigma_n, \partial \Sigma_n) \). 
Let us construct \( \mathcal A \) and then argue that it satisfies this property.

\begin{figure}
\centering
\includegraphics[scale=0.8]{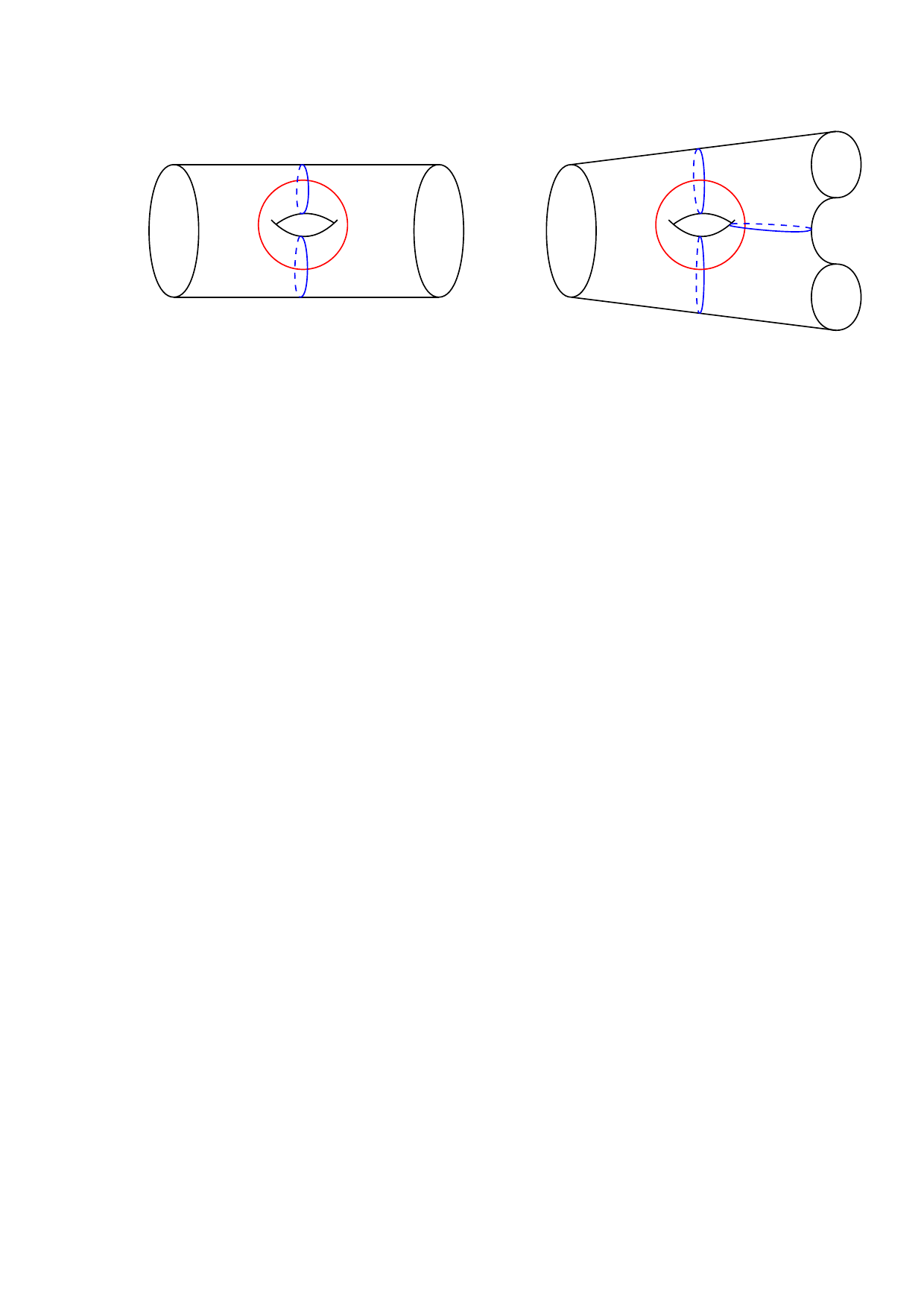}
\caption{The  surfaces \( F_2 \) (left) and \( F_3 \) (right) and their corresponding Alexander chains, \( C_2 \) and \( C_3 \).}
\label{fig:chain}
\end{figure}

Let \( S_0 = \Sigma_0 \), and for \( n \in \bn \), let \( S_n \) be the closure of \( \Sigma_{n} \ssm \Sigma_{n-1} \). 
Let \( \mathcal A_0 \) consist of any two simple closed curves with geometric intersection number one; hence, the curves in \( \mathcal A_0 \) generate \( \mcg(\Sigma_0, \partial \Sigma_0) \) (see \cite[\S3.6.4]{Primer}). 
For \( i \in \{2,3\} \), let \( F_i \) be the genus one compact surface with \( i \) boundary components.  
Let \( C_i \)  be the Alexander chain in \( F_i \) shown in Figure~\ref{fig:chain}. 
For \( n \in \bn \), let \( i(n) \in \{2,3\} \) such that \( S_n \) is homeomorphic to \( F_{i(n)} \), and fix a homeomorphism \( f_n \co F_{i(n)} \to S_n \). 
Let \( \mathcal A'\)  be the chain defined by \( \mathcal A' = \mathcal A_0 \cup \bigcup_{n = 1}^\infty f_n(C_{i(n)}) \).

We now add curves to the collection $\mathcal A'$ so that it forms an Alexander chain. 
Each of the curves in Figure~\ref{fig:chain} are colored either blue or red; each surface in the figure contains a single red curve that intersects each of the remaining curves---the blue curves---exactly once. 
Note that, in each surface, the blue curves give a pants decomposition.  
Designate one curve in \( \mathcal A_0 \) to be blue. 
Now, let \( \mathcal A'_b \) denote the subset of \( \mathcal A' \) containing all the blue curves. 
Each component of \( M \ssm \mathcal A'_b \) is a four-holed sphere. 
Enumerate the components as follows: define \( \Omega_n \) to be the unique component that is contained in \( \Sigma_n \) but not in \( \Sigma_{n-1} \).
Choose a non-separating curve \( a_n \) in \( M \) that is contained in \( \Omega_n \) and such that \( \mathcal A' \cup \{a_n\}  \) is a chain. 
Let \( \mathcal A = \mathcal A' \cup \{ a_n : n \in \bn\} \). 
By construction, \( \mathcal A \) is an Alexander chain. 
An example of the chain \( \mathcal A \) can be seen in Figure~\ref{fig:alexander-chain}.

For \( n \in \bn \cup\{0\} \), let \( \mathcal A_n \) be the subset of \( \mathcal A \) consisting of curves contained in \( \Sigma_n \). 
We claim that \( \{ t_a\}_{a\in \mathcal A_n} \) generates \( \mcg(\Sigma_n, \partial \Sigma_n) \). 
We proceed by induction. 
By construction, \( \{ t_a\}_{a \in \mathcal A_0} \) is a generating set for \( \mcg(\Sigma_0, \partial \Sigma_0) \). 
Now, assume that \( \{ t_a\}_{a \in \mathcal A_n} \) generates \( \mcg(\Sigma_n, \partial \Sigma_{n}) \). 
Recall that \( \Sigma_{n+1} = \Sigma_n \cup S_{n+1} \), where \( S_{n+1} \) is a compact genus one surface with either two or three boundary components.  

\begin{figure}[t]
\centering
\includegraphics{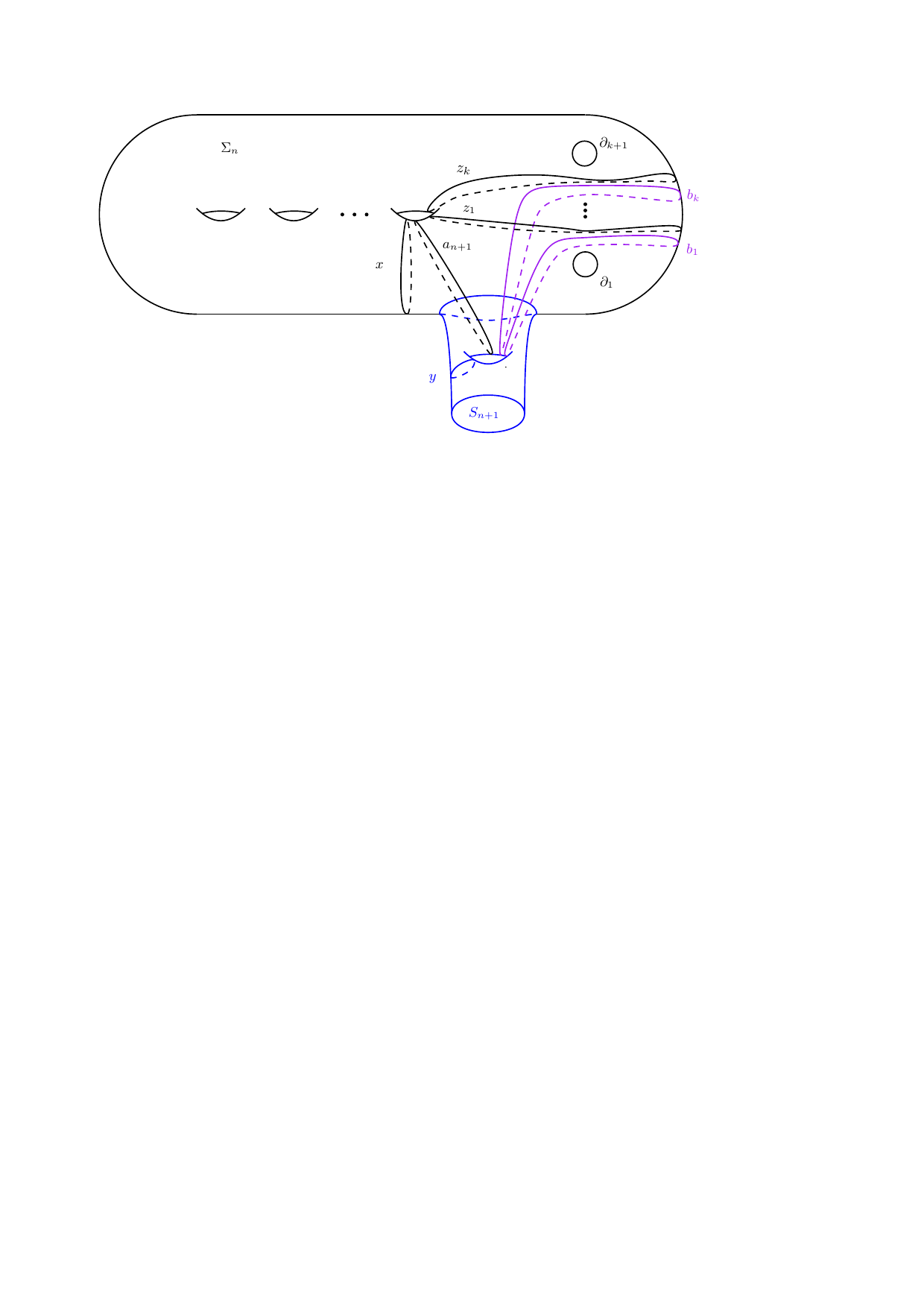}
\caption{The curves involved in the inductive step in the proof of Theorem~\ref{thm:generators}.}
\label{fig:induction}
\end{figure}

Let \( H \) be the subgroup generated by \( \{ t_a \}_{a \in \mathcal A_{n+1}} \).
In order to show that \( H = \mcg(\Sigma_{n+1}, \partial \Sigma_{n+1}) \), it is enough to show that \( t_{b_i} \in H \) for \( i \in \{1, \ldots, k \} \), where \( b_i \) is as shown in Figure~\ref{fig:induction}. 
As the set \( \{ t_a \}_{a \in \mathcal A_{n+1}} \cup \{ t_{b_i} \}_{i=1}^k \) contains a Humphries generating set for \( \mcg(\Sigma_{n+1}, \partial \Sigma_{n+1})\).  For the rest of the argument, we will use the curves as labelled in Figure~\ref{fig:induction}. 
The figure is drawn with \( S_{n+1} \) having two boundary components, but the same labeling and the following argument directly apply to the case where \( S_{n+1} \) has three boundary components.

For \( i \in \{1, \ldots, k\} \), consider the four-holed sphere \( R_i \) embedded in \( \Sigma_{n+1} \)  with boundary components \( x \), \( y \), \( b_i \), and \( z_i \). 
Observe that there exists an essential simple closed curve \( w_i \) in \( R_i \) that is contained in \( \Sigma_n \) and satisfies \( i(a_{n+1}, w_i) = 2 \). 
Then \( \{ t_{a_{n+1}}, t_{w_i}, t_x, t_y, t_{z_i} \} \) generates \( \mcg(R_i, \partial R_i) \).
Therefore, as \( t_{b_i} \in \mcg(R_i, \partial R_i) \) and \( \{ t_{a_{n+1}}, t_{w_i}, t_x, t_y, t_{z_i} \} \subset H \), we have that \( t_{b_i} \in H \), as desired.
\end{proof}

\begin{Rem}
Above we describe the Humphries generating set for an orientable non-planar compact surface; however, the Humphries theorem holds more generally for orientable non-planar surfaces of finite type, with \( \mcg(S, \partial S) \) replaced by \( \pmcg(S, \partial S) \). 
As a consequence, Theorem~\ref{thm:generators} can be readily extended to orientable infinite-genus 2-manifolds in which every planar end is isolated. 
\end{Rem}


\section{Irreducible epimorphisms}

As \( \Gamma(M) \) is the smallest mostly pure large subgroup of \( \mcg(M) \), we will begin by working with  this group and later reduce the main theorem to this case.  

A homomorphism \( \varphi \co \Gamma(M) \to \Gamma(M') \) is \emph{compact}  if \( \vp(\Gamma_c(M)) \subset \Gamma_c(M') \); it is \emph{reducible} if there exists an arc or curve \( a \) in \( M' \) such that \( \varphi(g)(a) = a \) for all \( g \in \Gamma(M) \), and otherwise it is \emph{irreducible}. 
The goal of the section is to prove the following:

\begin{Thm}
\label{thm:compactly-supported}
Let \( M \) and \( M' \) be non-compact orientable 2-manifolds with no planar ends.
Every continuous compact irreducible homomorphism \( \Gamma(M) \to \Gamma(M') \) is induced by a homeomorphism. 
\end{Thm}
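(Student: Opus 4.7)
The plan is to use the Alexander chain generating set supplied by Theorem~\ref{thm:generators} to transfer the combinatorial structure of $\Gamma(M)$ to $\Gamma(M')$ via $\varphi$, and then to reconstruct the desired homeomorphism from this combinatorial data. Fix an Alexander chain $\mathcal A = \{a_i\}_{i\in\bn}$ in $M$ and an exhaustion $\{\Sigma_n\}_{n\in\bn}$ as in Theorem~\ref{thm:generators}, so $\{t_a\}_{a\in \mathcal A_n}$ generates the finitely generated group $\mcg(\Sigma_n, \partial\Sigma_n)$ and $\Gamma_c(M) = \bigcup_n \mcg(\Sigma_n,\partial\Sigma_n)$.

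First I would show that each $\varphi|_{\mcg(\Sigma_n,\partial\Sigma_n)}$ factors through a finite-type mapping class group of a compact subsurface of $M'$. This follows from the compactness hypothesis on $\varphi$ together with finite generation: the images of the (finitely many) generating Dehn twists each have support in a compact subsurface of $M'$, and the union of these supports is compact, hence contained in some compact subsurface $\Sigma'_n \subset M'$ (which one can arrange so that $\Sigma'_n \subset \Sigma'_{n+1}$). Next, I would apply an Ivanov-style rigidity theorem for homomorphisms between finite-type mapping class groups to conclude that, for $n$ large enough that $\Sigma_n$ has sufficient genus, the image $\varphi(t_{a_i})$ is a (possibly power of a) Dehn twist about a non-separating simple closed curve $a'_i$ in $M'$, and that the assignment $a_i \mapsto a'_i$ preserves disjointness, intersection one, and the tree structure of $\mathcal T(\mathcal A)$. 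The irreducibility hypothesis on $\varphi$ is crucial here to rule out degenerate situations where the images jointly fix a curve or arc and force the restrictions into reducible subgroups.

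Given such a chain $\mathcal A' = \{a'_i\}$ in $M'$ with the same combinatorial structure as $\mathcal A$, the classification of surfaces and the absence of planar ends in $M$ allow one to build a homeomorphism $f \co M \to M'$ sending $a_i$ to $a'_i$ for every $i$: one constructs $f$ inductively along the exhaustion $\{\Sigma_n\}$, extending across each successive pair-of-pants-type annular piece produced by Lemma~\ref{lem:exhaustion}, and verifies that the end spaces (and their non-planar substructures) must match because the chain $\mathcal A'$ inherits a filling property from $\mathcal A$ through $\varphi$. Once $f$ is in hand, both $\varphi$ and the conjugation homomorphism $[g]\mapsto [f \circ g \circ f^{-1}]$ agree on each generator $t_{a_i}$ (up to issues of powers, which the Alexander-chain structure forces to be $\pm 1$ by examining intersection patterns with adjacent chain elements), and Proposition~\ref{prop:alexander-method} together with Lemma~\ref{lem:fact37} shows they agree on all of $\Gamma_c(M)$. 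Continuity of $\varphi$ then extends the agreement from $\Gamma_c(M)$ to its closure $\Gamma(M)$.

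The main obstacle is the second step: pinning down $\varphi(t_{a_i})$ as a genuine Dehn twist (or $\pm 1$ power) rather than a more general multitwist or root of a multitwist, and showing that the supporting curves $a'_i$ form an Alexander chain in $M'$ rather than a reducible or separating collection. The irreducibility hypothesis is precisely what one needs to exclude such pathologies, but translating global irreducibility into effective statements about the restriction to each $\mcg(\Sigma_n,\partial\Sigma_n)$ requires care, since the individual restrictions need not be irreducible in the finite-type sense; one likely has to use the fact that the Alexander chain is filling, so any curve fixed by all $\varphi(t_{a_i})$ would have to be disjoint from every $a'_i$ and therefore fixed globally by $\varphi(\Gamma(M))$, contradicting irreducibility. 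A secondary obstacle is controlling the end-space combinatorics of $M'$ through $\varphi$, where the no-planar-ends hypothesis on $M$ is essential to ensure the Alexander chain in $M'$ can reconstruct all of $M'$ rather than an improper subsurface.
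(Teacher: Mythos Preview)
Your overall architecture matches the paper's: transport the Alexander chain through $\varphi$, obtain a chain isomorphism, promote it to a homeomorphism via finite-type rigidity on the exhaustion, and extend by continuity. The genuine gap is in your second step. You propose to apply an Ivanov/Aramayona--Souto rigidity theorem to $\varphi|_{\mcg(\Sigma_n,\partial\Sigma_n)}\co \mcg(\Sigma_n,\partial\Sigma_n)\to \mcg(\Sigma'_n,\partial\Sigma'_n)$ in order to \emph{deduce} that $\varphi(t_{a_i})$ is a Dehn twist. But those theorems carry a hypothesis bounding the genus of the target in terms of the genus of the source, and your $\Sigma'_n$ is merely ``some compact subsurface containing the supports of the images of the generators''---a priori its genus could be arbitrarily large relative to that of $\Sigma_n$, so the rigidity theorem does not apply. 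You correctly flag ``pinning down $\varphi(t_{a_i})$ as a genuine Dehn twist'' as the main obstacle, but your sketch offers no mechanism for it beyond the finite-type rigidity you are trying to invoke.

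The paper resolves this obstacle \emph{before} ever touching Aramayona--Souto, and the argument is essentially infinite-type rather than a consequence of finite-type rigidity. First Bridson's theorem gives that $\varphi(t_a)$ is a root of a multitwist; then a sequence of lemmas exploiting continuity on \emph{infinite} locally finite non-separating multicurves (Lemmas~\ref{lem:disjoint1}--\ref{lem:disjoint_supports}) shows that for a non-separating pair $\{a,b\}$ the images $\varphi(t_a),\varphi(t_b)$ have disjoint supports; the lantern relation then kills the root (Lemma~\ref{lem:multitwist}); and finally the second author's classification of braided multitwists (Theorem~\ref{thm:depool}) upgrades multitwist to a single non-separating Dehn twist and gives the chain isomorphism (Proposition~\ref{prop:twist2twist}). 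Only \emph{after} this does the paper define $\Sigma'_n$ as the subsurface filled by the image chain $\mathcal A'_n$, compute via a ``lower genus'' argument that $\Sigma_n$ and $\Sigma'_n$ have equal genus, and invoke \cite{AramayonaHomomorphisms}. So the logical order is reversed from your sketch: twist-to-twist is input to the finite-type rigidity, not output, and establishing it requires continuity arguments on infinite multicurves that do not reduce to the finite-type restrictions.
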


The key ingredient in the proof is to establish that such a homomorphism
sends Dehn twists to Dehn twists and preserves geometric intersection
numbers 0 and 1 between the associated curves.  From this, we can build the
desired homeomorphism. To accomplish this, we proceed with a sequence of
lemmas. As we proceed through the lemmas, we will put as few hypotheses on
each lemma as possible in hopes that they will be useful to others. 

Using our assumptions and \cite{BridsonSemisimple}, it is not hard to see
that $\varphi$ must map a Dehn twist $t_a$ about a curve $a$ to a
\emph{root} of a Dehn twist about a multicurve $\varphi_*(a)$
(Lemma~\ref{lem:Bridson}). We then eliminate roots via a sequence of steps,
taking advantage of the infinite-type setting. The first step
(Lemma~\ref{lem:disjoint1}) shows that if $a$ and $b$ are disjoint and
non-separating (individually and jointly) then $\varphi_*(a)$ and
$\varphi_*(b)$ are also \emph{disjoint} in a strong sense (see definition
below). This result relies on a pigeon-hole argument tailored to the
infinite-type setting, based on the existence of a locally finite,
\emph{non-separating}, infinite multicurve containing the curves $a$ and
$b$ (Lemma~\ref{lem:good_curves}). Next, a continuity argument using an
infinite-product trick shows that local finiteness of multicurves is
preserved under $\varphi_*$ (Lemma~\ref{lem:locally_finite}). This gives
rise to a second pigeon-hole argument that allows us to promote
disjointness to the supports of the roots themselves
(Lemma~\ref{lem:disjoint_supports}). Once the disjointness of supports is
established, we can use the lantern relation to eliminate roots
(Lemma~\ref{lem:multitwist}). Finally, we eliminate multiple components in
$\varphi_*(a)$ for a non-separating $a$
(Proposition~\ref{prop:twist2twist}), which also shows that intersection
number one is preserved under $\varphi$. This part of the argument relies
on the recent work of \cite{DePoolBraided} on the braid relation between
multitwists. From here, we move to the proof of
Theorem~\ref{thm:compactly-supported}, where we use the results of
\cite{AramayonaHomomorphisms} to obtain homeomorphisms between compact
exhaustions of \( M \) and \( M' \), allowing us to take a direct limit to
construct the desired homeomorphism \( M \to M' \). 

\begin{Lem}
\label{lem:no-torsion}
If \( M \) is an infinite-type orientable 2-manifold, then every nontrivial element of \( \Gamma_c(M) \) has infinite order.  
\end{Lem}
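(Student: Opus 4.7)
The plan is to reduce the infinite-type statement to the classical, finite-type fact that the mapping class group of a compact orientable surface with nonempty boundary (relative to the boundary) is torsion-free. Let $f\in\Gamma_c(M)$ be nontrivial, and choose a representative homeomorphism $\phi\co M\to M$ of $f$ that restricts to the identity outside some compact set $K$.

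First I would enlarge $K$ to a compact subsurface $\Sigma\subset M$ such that every component of $\partial\Sigma$ is both essential in $M$ and separates $M$. This is the only step that actually uses the infinite-type hypothesis: starting from a regular neighborhood of $K$, one can modify each boundary component by absorbing one of its complementary sides or by attaching handles drawn from the noncompact part of $M\ssm K$ until every boundary curve is essential and separating. Since $\phi$ is supported in the interior of $\Sigma$, it represents an element $f_\Sigma\in\mcg(\Sigma,\partial\Sigma)$. As each boundary component of $\Sigma$ is essential and separating, the inclusion-induced homomorphism $\mcg(\Sigma,\partial\Sigma)\to\mcg(M)$ is injective, so $f_\Sigma$ is nontrivial.

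Now I would invoke the classical fact that $\mcg(\Sigma,\partial\Sigma)$ is torsion-free for any compact orientable surface $\Sigma$ with $\partial\Sigma\neq\varnothing$. The sporadic low-complexity cases (disk, annulus, pair of pants) are immediate since their boundary-fixing mapping class groups are trivial or free abelian. In the generic case one equips $\Sigma$ with a hyperbolic metric having totally geodesic boundary and applies Nielsen realization for surfaces with boundary: any finite-order element of $\mcg(\Sigma,\partial\Sigma)$ is represented by an orientation-preserving isometry that fixes $\partial\Sigma$ pointwise, but such an isometry fixes a nontrivial geodesic arc and is therefore trivial. Consequently $f_\Sigma$, and hence $f$, has infinite order.

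The only subtle point is the topological step of enlarging $K$ to a $\Sigma$ with essential separating boundary; everything else is a direct appeal to well-known finite-type results. In the settings later used in the paper (where $M$ has no planar ends), such a $\Sigma$ can be produced very cleanly from the exhaustion constructed in Lemma~\ref{lem:exhaustion}, but the general infinite-type case is covered by the standard argument sketched above.
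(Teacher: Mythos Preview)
Your proof is correct and follows essentially the same route as the paper's: enlarge the compact support to a finite-type subsurface \( \Sigma \) whose boundary components are essential, use injectivity of the inclusion-induced map \( \mcg(\Sigma,\partial\Sigma)\to\mcg(M) \), and invoke torsion-freeness of \( \mcg(\Sigma,\partial\Sigma) \) (the paper simply cites \cite[Corollary~7.3]{Primer} for this last step). One minor remark: you require the boundary components of \( \Sigma \) to be separating, but this is not needed for the injectivity of the inclusion homomorphism---essential (and pairwise non-isotopic) suffices, which simplifies the topological enlargement step.
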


\begin{proof}
If \( g \in \Gamma_c(M) \) is nontrivial, then there exists a finite-type subsurface \( \Sigma \subset M \) containing the support of $g$ such that each boundary component of $\Sigma$ is an essential simple closed curve. 
In this case, the inclusion \( \Sigma \hookrightarrow M \) induces a monomorphism \( \mcg(\Sigma, \partial \Sigma) \to \Gamma_c(M) \) whose image contains \( g \). 
Therefore, we can view \( g \) as an element of \( \mcg(\Sigma, \partial \Sigma) \), which is torsion free (see \cite[Corollary~7.3]{Primer}). 
\end{proof}

Next, under the additional assumptions of non-triviality and continuity, we rule out the case that the image of a Dehn twist is trivial. 
As discussed above, pure mapping class groups of finite-type nonplanar surfaces are generated by Dehn twists about non-separating curves, and as all such Dehn twists are conjugate, these pure mapping class groups are normally generated by any such Dehn twist. 
It readily follows that \( \Gamma(M) \) is topologically normally generated by any non-separating Dehn twist. 

\begin{Lem}
\label{lem:infinite-order}
Let \( M \) be a non-planar orientable 2-manifold, let \( M' \) be an infinite-type orientable 2-manifold, and let \( t \in \Gamma(M) \) be a non-separating Dehn twist.
If \( \vp \co \Gamma(M) \to \Gamma(M') \) is continuous, compact, and nontrivial, then \( \vp(t) \) has infinite order. 
\end{Lem}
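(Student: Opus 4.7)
My plan is to combine two ingredients that the paper has just put in place: torsion-freeness of compactly supported mapping classes of an infinite-type 2-manifold (Lemma~\ref{lem:no-torsion}), and topological normal generation of $\Gamma(M)$ by a single non-separating Dehn twist (noted in the paragraph immediately preceding the lemma).

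First, I would use the compactness hypothesis to conclude $\vp(t)\in\Gamma_c(M')$, which holds because $t\in\Gamma_c(M)$. Since $M'$ is infinite-type and orientable, Lemma~\ref{lem:no-torsion} tells us that every nontrivial element of $\Gamma_c(M')$ has infinite order. Consequently, if $\vp(t)$ had finite order, it would necessarily equal the identity; so it suffices to rule out the possibility $\vp(t)=e$.

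Next, I would argue by contradiction: suppose $\vp(t)=e$. The kernel $\ker\vp$ is normal by definition, and also \emph{closed} in $\Gamma(M)$, because $\vp$ is continuous and $\Gamma(M')$ is a Hausdorff topological group (so $\{e\}\subset \Gamma(M')$ is closed). Since $\Gamma(M)$ is topologically normally generated by any non-separating Dehn twist, the smallest closed normal subgroup of $\Gamma(M)$ containing $t$ is all of $\Gamma(M)$. Hence $\ker\vp=\Gamma(M)$, making $\vp$ trivial and contradicting the nontriviality hypothesis.

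The proof is short, and I do not expect a meaningful obstacle: both key inputs are already on hand, and the only additional observation needed is the routine fact that kernels of continuous homomorphisms into Hausdorff topological groups are closed. If anything, the only thing worth double-checking is that the paper's topological normal generation statement genuinely covers the full range of non-planar orientable 2-manifolds $M$ allowed in the hypotheses (including finite-type and ends-with-planarity cases); the preceding discussion in the paper indicates it does, by reducing to the finite-type nonplanar case and taking closures along a compact exhaustion of $\Gamma_c(M)$.
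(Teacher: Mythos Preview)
Your proposal is correct and follows essentially the same approach as the paper's proof. The paper's argument is simply a more compressed version of yours: it observes via Lemma~\ref{lem:no-torsion} that $\vp(t)$ is either trivial or of infinite order (implicitly using compactness to place $\vp(t)$ in $\Gamma_c(M')$), and then rules out triviality by invoking that $t$ is a topological normal generator of $\Gamma(M)$ together with continuity of $\vp$---exactly the closed-kernel argument you spell out.
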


\begin{proof}
By Lemma~\ref{lem:no-torsion}, \( \vp(t) \) is either trivial or infinite order.
If \( \vp(t) \) is trivial, then \( \vp \) is trivial, as \( t \) is a topological normal generator of \( \Gamma(M) \) and \( \vp \) is continuous; hence, \( \vp(t) \) has infinite order.
\end{proof}

Next, we extend a result of Bridson \cite{BridsonSemisimple} to our setting. 

\begin{Lem}
\label{lem:Bridson}
Let \( M \) and \( M' \) be orientable 2-manifolds with the genus of \( M \) being at least three, and let \( t \in \mcg(M) \) be a Dehn twist.
If \( \vp \co \Gamma(M) \to \Gamma(M') \) is a compact homomorphism, then \( \vp(t) \) is the root of a multitwist. 
\end{Lem}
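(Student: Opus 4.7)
The plan is to follow Bridson's strategy for finite-type surfaces \cite{BridsonSemisimple}, using the compactness hypothesis to localize $\vp(t)$ in a finite-type subsurface and then ruling out pseudo-Anosov components via commutation constraints coming from the rich centralizer of $t$ inside $\Gamma(M)$.

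The first step is localization. Since $\vp$ is compact and the Dehn twist $t$ is compactly supported, $\vp(t) \in \Gamma_c(M')$. Choose a compact, connected, finite-type subsurface $\Sigma' \subseteq M'$ each of whose boundary components is essential in $M'$ and whose interior contains a representative of the support of $\vp(t)$. Then $\vp(t)$ lies in the image of the canonical embedding $\mcg(\Sigma', \partial \Sigma') \hookrightarrow \mcg(M')$, placing us in a finite-type setting where the Nielsen--Thurston classification applies. Accordingly, there is a canonical reduction multicurve $\mathcal{R}$ on $\Sigma'$ and an integer $N \geq 1$ such that $\vp(t)^N$ preserves every component of $\Sigma' \smallsetminus \mathcal{R}$ setwise and restricts on each to either the identity or a pseudo-Anosov. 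The goal is equivalent to showing that no pseudo-Anosov component appears: then $\vp(t)^N$ is a multitwist along a sub-multicurve of $\mathcal{R}$, so $\vp(t)$ is the root of a multitwist.

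Next, I would build a commuting subgroup. Because $M$ has genus at least three, there exists a compact essential subsurface $R \subseteq M$ of genus two with connected essential boundary that is disjoint from an annular neighborhood of the core curve of $t = t_c$. The inclusion yields $\mcg(R, \partial R) \hookrightarrow \Gamma_c(M) \subseteq \Gamma(M)$ with image centralizing $t$ (supports are disjoint), so $\vp(\mcg(R, \partial R))$ centralizes $\vp(t)$. The group $\mcg(R, \partial R)$ is non-elementary and has finite abelianization, providing a rich source of commuting elements.

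The main obstacle is ruling out pseudo-Anosov components. Suppose, toward a contradiction, that $\vp(t)^N$ has a pseudo-Anosov component on some subsurface $Y \subseteq \Sigma'$. Any element of $\mcg(M')$ centralizing $\vp(t)$ preserves $Y$ up to isotopy and restricts on $Y$ to the virtually cyclic centralizer of the pseudo-Anosov $\vp(t)^N\lvert_Y$ in $\mcg(Y, \partial Y)$. Composing $\vp\lvert_{\mcg(R,\partial R)}$ with restriction to $Y$ therefore yields a homomorphism from $\mcg(R, \partial R)$ into a virtually cyclic group; combined with the finite abelianization of $\mcg(R, \partial R)$, the image is finite. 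Hence a finite-index subgroup $H \leq \mcg(R, \partial R)$ has $\vp$-image acting trivially on $Y$. The delicate part, and the technical heart of the argument, is to leverage this together with further choices of disjoint genus-two subsurfaces $R_1, R_2, \ldots \subseteq M$ whose mapping class groups provide additional commuting non-abelian subgroups of the centralizer of $t$, thereby overloading the rigidity imposed by the pseudo-Anosov on $Y$: the $\vp$-images of $\mcg(R_i, \partial R_i)$ must all commute with $\vp(t)$ and interact compatibly with $\mathcal R$ in $\Sigma'$, and producing an explicit obstruction from these interactions is the primary technical challenge in adapting Bridson's finite-type argument to the present setting.
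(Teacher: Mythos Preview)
Your proposal has a genuine gap: you outline Bridson's centralizer strategy but explicitly leave the ``primary technical challenge''---producing the obstruction from the interactions of the $\vp(\mcg(R_i,\partial R_i))$ with the reduction system---unresolved. As written, this is a sketch of a plan, not a proof.

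More to the point, the adaptation you are attempting is unnecessary. The paper's proof is a two-line \emph{reduction} to Bridson's theorem rather than a reworking of his argument. The step you are missing is localization in the \emph{domain}: choose a compact subsurface $\Sigma \subset M$ of genus at least three containing the twist curve, so that $t \in \mcg(\Sigma,\partial\Sigma)$ via the inclusion monomorphism. Because $\mcg(\Sigma,\partial\Sigma)$ is finitely generated and $\vp$ is compact, the $\vp$-image of this \emph{entire group} (not just of $t$) lands in $\mcg(\Sigma',\partial\Sigma')$ for a single compact $\Sigma' \subset M'$. One now has a homomorphism $\vp|_{\mcg(\Sigma,\partial\Sigma)} \co \mcg(\Sigma,\partial\Sigma) \to \mcg(\Sigma',\partial\Sigma')$ between finite-type mapping class groups, and Bridson's result \cite[Remark~3.1(3)]{BridsonSemisimple} applies verbatim to give that $\vp(t)$ is a root of a multitwist.

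You localized only $\vp(t)$ in the codomain and then tried to drag the (infinite-type) centralizer of $t$ in $\Gamma(M)$ into the picture, which forces you to redo Bridson's analysis by hand. Localizing the source group first---possible exactly because compact-surface mapping class groups are finitely generated---turns the lemma into a direct citation.
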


\begin{proof}
Let \( t \) be a Dehn twist about the curve \( a \) in \( M \). 
We can then find a compact surface \( \Sigma \) embedded in \( M \) such that \( a \subset \Sigma \), the inclusion of \( \Sigma \) into \( M \) induces a monomorphism \( \mcg(\Sigma, \partial \Sigma) \to \Gamma(M) \), and the genus of \( \Sigma \) is at least three. 
In particular, \( t \in \mcg(\Sigma, \partial \Sigma) \). 
As \( \vp \) is compact and as \( \mcg(\Sigma, \partial \Sigma) \) is finitely generated, there exists a compact surface \( \Sigma' \) of \( M' \) whose inclusion induces a monomorphism \( \mcg(\Sigma', \partial \Sigma') \to \Gamma(M') \) such that \( \mcg(\Sigma', \partial \Sigma') \) contains \( \vp(\mcg(\Sigma, \partial \Sigma)) \). 
We can then apply a result of Bridson \cite[Remark~3.1(3)]{BridsonSemisimple} to  \( \vp|_{ \mcg(\Sigma, \partial \Sigma)} \co \mcg(\Sigma, \partial \Sigma) \to \mcg(\Sigma', \partial \Sigma') \) to see that \( \vp(t) \) is a root of a multitwist. 
\end{proof}

Note that we allow for the identity to be considered a multitwist; however, Lemma~\ref{lem:infinite-order} and Lemma~\ref{lem:Bridson} imply that the image of a non-separating Dehn twist under a compact, continuous, nontrivial homomorphism is a root of a multitwist of infinite order.
We will now proceed to show that such an image is a nontrivial multitwist, i.e., it is not a root. 
But first, we introduce the following notation.

\textbf{Notation.} 
Let \( M \) and \( M' \) be orientable 2-manifolds with the genus of \( M \) being at least three, and let \( \vp \co \Gamma(M) \to \Gamma(M') \) be a continuous compact nontrivial homomorphism.
If \( a \) is a non-separating simple closed curve in \( M \), Lemma~\ref{lem:infinite-order} and Lemma~\ref{lem:Bridson} imply there exists a multicurve, which we denote \( \vp_*(a) \), such that \( \vp(t_a) \) is a root of a multitwist along \( \vp_*(a) \).

A multicurve is \emph{non-separating} if  the union of all its elements is non-separating; it is \emph{infinite} if it is an infinite set.
Note that every subset of a non-separating multicurve is itself a non-separating multicurve; in particular, each curve in a non-separating multicurve is non-separating. 
Let us record a readily verifiable topological fact.

\begin{Lem}
\label{lem:good_curves}
Let \( M \) be an infinite-genus orientable 2-manifold. 
If \( \{a,b\} \) is a non-separating multicurve, then there exists a locally finite non-separating infinite multicurve containing \( a \) and \( b \).  
\qed
\end{Lem}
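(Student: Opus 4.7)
My plan is to extend $\{a,b\}$ by selecting one non-separating curve $c_n$ from each ``slab'' of a compact exhaustion of $M$. I would first fix a compact exhaustion $K_1 \subset K_2 \subset \cdots$ of $M$ with $\{a,b\} \subset \mathrm{int}(K_1)$, where each $K_n$ is a compact connected subsurface with essential simple closed curve boundary and $K_n \subset \mathrm{int}(K_{n+1})$. Since $M$ has infinite genus, after passing to a subsequence I may assume $g(K_{n+1}) > g(K_n)$ for every $n$, so each slab $S_n := \overline{K_{n+1} \setminus K_n}$ has positive genus, and I can pick a simple closed curve $c_n \subset \mathrm{int}(S_n)$ that is non-separating in $S_n$.

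That $\{a,b,c_1,c_2,\ldots\}$ is a pairwise disjoint infinite collection of essential simple closed curves is immediate: the $c_n$ lie in pairwise disjoint open subsurfaces, each disjoint from $K_1 \supset a \cup b$. For local finiteness, any essential simple closed curve $d$ in $M$ is contained in some $K_N$; since $c_n \subset M \setminus K_N$ whenever $n \geq N$, only the finitely many $c_1, \ldots, c_{N-1}$ can intersect $d$.

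The main content is the non-separating property. Setting $C_n = a \cup b \cup c_1 \cup \cdots \cup c_{n-1}$ and $L_n = K_n \setminus C_n$, I would prove by induction that both $M \setminus C_n$ and $L_n$ (for $n \geq 2$) are connected. Granting $M \setminus C_n$ connected, if some component $U$ of $L_n$ did not meet $\partial K_n$, then $U \subset \mathrm{int}(K_n)$ would be both open and closed in $M \setminus C_n$, hence equal to all of $M \setminus C_n$, contradicting that the nonempty open set $M \setminus K_n \subset M \setminus C_n$ is disjoint from $U$. Thus every component of $L_n$ meets $\partial K_n \subset \partial S_n$. Since $c_n$ is non-separating in $S_n$ and disjoint from $\partial S_n$, the set $S_n \setminus c_n$ is connected and contains $\partial K_n$; attaching it to $L_n$ along $\partial K_n$ merges every component of $L_n$, so $L_{n+1} = L_n \cup (S_n \setminus c_n)$ is connected. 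A parallel cut-and-paste argument applied to the unbounded complement shows $M \setminus C_{n+1}$ is connected, completing the induction.

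Finally, $M \setminus (a \cup b \cup \bigcup_n c_n) = \bigcup_{n \geq 2} L_n$ is an ascending union of connected sets, hence connected, so the multicurve is non-separating. I expect the main obstacle to be the inductive step forcing every component of $L_n$ to reach $\partial K_n$, since this is precisely where the non-separating hypothesis on $\{a,b\}$ is transmitted through the construction.
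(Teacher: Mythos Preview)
The paper treats this as a ``readily verifiable topological fact'' and records no argument beyond the \qed, so your write-up already supplies more than the source. Your strategy---adding one non-separating curve per slab of a compact exhaustion and then realising $M\setminus(a\cup b\cup\bigcup_n c_n)$ as the ascending union of the connected sets $L_n$---is sound, and the key step (forcing every component of $L_n$ to reach $\partial K_n$ via the connectedness of $M\setminus C_n$) is correct.

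One point needs attention. In the inductive step you assert that $S_n\setminus c_n$ is connected and contains all of $\partial K_n$; this requires $S_n$ itself to be connected, which is not automatic for an arbitrary exhaustion (nor preserved by the ``pass to a subsequence'' move you make to force genus growth) when $M$ has more than one end. The repair is routine: construct the exhaustion so that each $K_{n+1}$ is obtained from $K_n$ by attaching a single connected compact piece, extending into one component of $M\setminus K_n$ at a time and cycling through the components so as still to exhaust $M$. Then every slab $S_n$ is connected. If $M$ has planar ends some slabs may have genus zero; for those simply add no curve and observe that $L_{n+1}=L_n\cup S_n$ is again connected. Since $M$ has infinite genus, infinitely many slabs carry positive genus, so the resulting multicurve is still infinite. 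With this adjustment your induction goes through as written.
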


We use this topological observation to show that the images of two Dehn twists about curves that together form a non-separating multicurve are multitwists along disjoint multicurves. Recall that two multicurves $A$ and $B$ are disjoint if they have no curves in common and $i(a,b)=0$ for all $a \in A$ and $b \in B$.
 
\begin{Lem}
\label{lem:disjoint1}
Let \( M \) and \( M' \) be orientable 2-manifolds, with \( M \) of infinite genus, and let \( \vp \co \Gamma(M) \to \Gamma(M') \) be a continuous compact nontrivial homomorphism. 
If \( \{a,b\} \) is a non-separating multicurve in \( M \), then \( \vp_*(a) \) and \( \vp_*(b) \) are disjoint. 
\end{Lem}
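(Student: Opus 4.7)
The plan is to establish the two conditions in the definition of disjointness separately: pairwise vanishing of intersection numbers, and absence of common curves.

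\emph{Vanishing intersection numbers.} Since $\{a,b\}$ is a multicurve, $t_a$ and $t_b$ commute in $\mcg(M)$. Applying $\varphi$ and passing to suitable powers $m,n$ via Lemmas~\ref{lem:infinite-order} and~\ref{lem:Bridson} produces non-trivial commuting multitwists $\varphi(t_a)^m$ along $\varphi_*(a)$ and $\varphi(t_b)^n$ along $\varphi_*(b)$. Lemma~\ref{lem:disjoint-commute} then forces $i(c',d')=0$ for every $c'\in\varphi_*(a)$ and $d'\in\varphi_*(b)$.

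\emph{No common curves.} Suppose for contradiction that $c\in\varphi_*(a)\cap\varphi_*(b)$. Since $\varphi$ is compact, $\varphi(t_a)^m\in\Gamma_c(M')$, so the multicurve $\varphi_*(a)$ is finite. Since $\{a,b\}$ is non-separating, $a$ is itself non-separating, and cutting $M$ along $a$ leaves an infinite-genus connected surface. Inside it one constructs a locally finite pairwise disjoint family of non-separating simple closed curves $\{b_j\}_{j\in\bn}$ with every pair $(a,b_j)$ ambient-homeomorphic to $(a,b)$ in $M$; the change-of-coordinates principle then furnishes compactly supported $h_j\in\Gamma(M)$ with $h_j(a)=a$ and $h_j(b)=b_j$, and $\{a,b_1,b_2,\ldots\}$ is a locally finite non-separating multicurve in $M$. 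Since $h_j(a)=a$, $h_j$ commutes with $t_a$, so $\varphi(h_j)$ commutes with the multitwist $\varphi(t_a)^m$ and therefore preserves its support $\varphi_*(a)$ set-wise. By Lemma~\ref{lem:fact37}, $\varphi_*(b_j)=\varphi(h_j)(\varphi_*(b))$, so $\varphi(h_j)(c)\in\varphi_*(a)\cap\varphi_*(b_j)$ for every $j$. Finiteness of $\varphi_*(a)$ and the pigeonhole principle yield a single curve $c^*\in\varphi_*(a)$ and an infinite set $J\subset\bn$ with $c^*\in\varphi_*(b_j)$ for all $j\in J$.

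Now apply the infinite-multitwist continuity argument to the locally finite non-separating multicurve $A=\{a\}\cup\{b_j\}_{j\in J}$, reindexed as $\{a_i\}$. For each $a_i\in A$ pick $n_i\in\bn$ so $T_i=\varphi(t_{a_i})^{n_i}$ is a multitwist along $\varphi_*(a_i)$. Lemma~\ref{lem infinite twists} gives convergence of $\prod_i t_{a_i}^{n_i}$ in $\mcg(M)$; continuity of $\varphi$ transfers this to convergence of $\prod_i T_i$ in $\mcg(M')$. The intersection part applied pairwise in $A$ (each pair being a non-separating sub-multicurve) shows that the $\varphi_*(a_i)$ pairwise have zero intersection, so the $T_i$ pairwise commute and each partial product $\prod_{i\leq N}T_i$ is itself a multitwist whose coefficient at $c^*$ is a partial sum of nonzero integers over the infinitely many indices $i$ with $c^*\in\varphi_*(a_i)$. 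Convergence of this sequence of multitwists in $\mcg(M')$ forces this integer partial sum to stabilise, which is impossible when infinitely many summands are nonzero---the required contradiction.

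\emph{Main obstacle.} The main difficulty is the geometric construction of the $h_j$: producing compactly supported mapping classes fixing $a$ and realising a locally finite pairwise disjoint family of push-offs of $b$ in $M\setminus a$. The infinite-genus hypothesis together with the absence of planar ends ensures that $M\setminus a$ has enough room for the required iterative construction, so that the resulting $\{a,b_1,b_2,\ldots\}$ is simultaneously locally finite and non-separating in $M$.
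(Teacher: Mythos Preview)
Your proof is correct and follows the same overall strategy as the paper: propagate a hypothetical common curve via change of coordinates, pigeonhole into a fixed curve appearing in infinitely many \(\varphi_*\)-images, then contradict continuity with a divergent infinite product of twists. Your execution, however, is leaner than the paper's. The paper tracks the entire intersection set \(A\cap C_n\), passes to a subsequence so these sets equal a common \(X\), proves \(X\) is fixed setwise by all of \(\varphi(\Gamma_c(M))\), does a \emph{second} pigeonhole on permutations of \(X\) to make the \(X\)-part of each \(\varphi(t_{c_k}^m)\) equal a fixed multitwist \(T_X\), and finally reads off divergence from \(\varphi(F_n)=T_X^{\,n}\cdot(\text{part supported off }X)\). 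You instead track a single curve \(c^*\) and argue directly on its coefficient in the partial products; this sidesteps the global-fixing argument and the second pigeonhole entirely, at no cost.

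Two minor remarks. First, the assertion that convergence of the partial products \(P_N\) forces the coefficient at \(c^*\) to stabilise deserves one explicit line: \(P_N^{-1}P_{N+1}=T_{N+1}\) is a multitwist with \(c^*\) in its support, so by Lemma~\ref{lem:disjoint-commute} (applied to \(T_{N+1}\) and \(t_d\)) it moves every curve \(d\) with \(i(d,c^*)>0\), preventing \(T_{N+1}\to\mathrm{id}\). Second, the ``absence of planar ends'' in your final paragraph is not among the hypotheses of this lemma; infinite genus alone suffices to build the locally finite non-separating family \(\{a,b_1,b_2,\ldots\}\), which is exactly the content of Lemma~\ref{lem:good_curves}.
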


\begin{proof}
First observe that \( t_a \) and \( t_b \) commute, so \( \vp(t_a) \) and \( \vp(t_b) \) also commute, as do their powers.  
Since $\vp(t_a)$ and$\vp(t_b)$ are roots of multitwists along $\vp_*(a)$ and $\vp_*(b)$, it follows then, by Lemma \ref{lem:disjoint-commute}, that \( i(a',b') = 0 \) for all $a' \in \vp_*(a)$ and $b' \in \vp_*(b)$. 
So it is left to show that \( \vp_*(a) \cap \vp_*(b) = \varnothing \). 

By Lemma~\ref{lem:good_curves}, there exists a locally finite non-separating infinite multicurve \( \{a,b, c_1, c_2, \ldots \} \). 
Let \( A' = \vp_*(a) \), \( B' = \vp_*(b) \), and  \( C'_n = \vp_*(c_n) \) for each \( n \in \bn \). 
By the change of coordinates principle, for each \( n \in \bn \), there exists \( f_n \in \Gamma_c(M) \) such that \( f_n(a) = a \) and \( f_n(b) = c_n \). 
Let \( m \in \bn \) such that \( \vp(t_a^m) \) is a multitwist.
It follows that \( \vp(t^m) \) is a multitwist for any non-separating Dehn twist \( t \). 
As \( f_n \) commutes with \( t_a^m \) and conjugates \( t_b^m \) to \( t_{c_n}^m \),
it follows from Lemma~\ref{lem:fact37} that \( \vp(f_n)(A') = A' \) and \( \vp(f_n)(B') = C'_n \).
Moreover, 
\[ \vp(f_n)(A'\cap B') = \vp(f_n)(A') \cap \vp(f_n)(B') = A' \cap C'_n, \]
where the first equality uses that fact that \( \vp(f_n) \) gives a bijection on the set all curves of \( M' \). 

Let us assume that \( A' \cap B' \) is non-empty, so that \( A' \cap C'_n \) is non-empty for all \( n \in \bn \).
As the power set of \( A' \) is finite, by passing to a subsequence, we may assume that \( A'\cap C'_{k} = A' \cap C'_{j} \) for all \( j, k \in \bn \). 
This allows us to define \( X = \vp(f_{k})(A' \cap B') \) to be independent of \( k \).  
Now observe that given \( j\neq k \in \bn \), there exists \( g_{j,k} \in \Gamma(M) \) such that \( g_{j,k}(a) = c_{j} \) and \( g_{j,k}(b) = c_{k} \).
It follows that \( \vp(g_{j,k})(A' \cap B') = C'_{j} \cap C'_{k} \), and hence \( | A' \cap B'| = | C'_{j} \cap C'_{k} | \). 
As \( |X| = |A' \cap B'| \) and as \( X \subset C'_{j} \cap C'_{k} \), we have that \( C'_{j} \cap C'_{k} = X \).

Let \( t \) be a Dehn twist about a non-separating simple closed curve. 
Note that the set \( \{c_{k}\}_{k \in \bn } \) is locally finite, so \( t \) commutes with infinitely many of the \( c_{k} \), and hence \( \vp(t) \) must commute with any multitwist about \( X \), implying \( \vp(t)(X) = X \). 
As \( t \) was arbitrary, the previous statement holds for all non-separating Dehn twists, and as the non-separating Dehn twists generate \( \Gamma_c(M) \), we have that \( \vp(f)(X) = X \) for all \( f \in \Gamma_c(M) \).

For \( k \in \bn \), let \( t_k \) be the Dehn twist about \( c_{k} \).
Let \( m \in \bn \) be as above, so that \( \vp(t_k^m) \) is a multitwist, denoted \( T_k \), about the multicurve \( C'_{k} \). 
As \( X \subset C'_k \), there exist multitwists \( T_{X, k} \) and \( T'_k \) about \( X \) and \( C'_k \ssm X \), respectively, such that \( T_k = T_{X,k} T'_k \). 
Choose \( h_k \in \Gamma_c(M) \) so that \( h_k(c_{1}) = c_{k} \).
It follows that  \( \vp(h_k) \) conjugates \( T_1 \) to \( T_k \).
From the previous paragraph, we know that \( \vp(h_k) \) must fix \( X \) (setwise), and so \( \vp(h_k) \) conjugates \( T_{X,1} \) to \( T_{X,k} \). 
Since $X$ is finite, its permutation group is finite; hence, by passing to a subsequence,  we may assume that $\vp(h_{k})$ induces the same permutation on $X$ for all $k$. 
Therefore, we have \( T_{X,i} = T_{X,j} \) for all $i,j\in \bn$. 
This allows us to define \( T_{X} = T_{X,1} \) and to write 
\[
T_{k} = T_{X} T_{k}'
\]
for every \( k \in \bn \). 

To finish, for \( n \in \bn \), consider the mapping class \( F_n = \prod_{k =1}^n t_{k} \). 
Then \[ \vp(F_n) = \prod_{k=1}^n T_{k} = T^n_{X} \prod_{k=1}^n T_{k}' .\]
From the fact that the multicurve \( \{c_{k}\}_{k\in\bn} \) is locally finite, the limit of the \( F_n \) exists; however, the sequence \( \{\vp(F_n)\}_{n\in\bn} \) diverges.
This is not possible as \( \vp \) is continuous, and therefore, \(  A' \cap B' \) must be empty.
\end{proof}

From Lemma~\ref{lem:disjoint1}, we readily deduce the following.

\begin{Lem}
\label{lem:locally_finite}
Let \( M \) and \( M' \) be orientable 2-manifolds with \( M \) of infinite genus, and let \( \vp \co \Gamma(M) \to \Gamma(M') \) be a continuous compact nontrivial homomorphism. 
If \( A = \{ a_i \}_{i\in I} \) is a locally finite multicurve in \( M \) and each $a_i$ is a non-separating curve, then \( \vp_*(A) := \bigcup_{i\in I} \vp_*(a_i)  \) is a locally finite multicurve in \( M' \). 
\end{Lem}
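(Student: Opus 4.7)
The plan is to combine the uniform--exponent observation furnished by Lemma~\ref{lem:Bridson} with a continuity argument applied to an infinite multitwist. To set up, apply Lemma~\ref{lem:Bridson} to a single non-separating Dehn twist to obtain an integer $m$ with $\vp(t_a^m)$ a multitwist; since all non-separating Dehn twists are conjugate in $\Gamma_c(M)$ and conjugation carries multitwists to multitwists (Lemma~\ref{lem:fact37}), the same $m$ works for every non-separating curve. Lemma~\ref{lem:infinite-order} then guarantees that each $\vp(t_{a_i}^m)$ is a \emph{nontrivial} multitwist along $\vp_*(a_i)$, and $\vp_*(a_i)$ is itself locally finite as it is the support of a multitwist in $\Gamma(M')$.

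First, to see that $\vp_*(A)$ is a multicurve, fix distinct $a_i, a_j \in A$. The curves $a_i$ and $a_j$ are disjoint in $M$, so $t_{a_i}$ and $t_{a_j}$ commute, whence $\vp(t_{a_i}^m)$ and $\vp(t_{a_j}^m)$ commute. Lemma~\ref{lem:disjoint-commute} then gives $i(\alpha,\beta) = 0$ for every $\alpha \in \vp_*(a_i)$ and every $\beta \in \vp_*(a_j)$. Combined with each $\vp_*(a_i)$ being a multicurve, this shows $\vp_*(A)$ is a multicurve.

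For local finiteness, enumerate $A = \{a_n\}_{n\in\bn}$ (the finite case is immediate). Since $A$ is locally finite, the partial products $T_N = \prod_{n=1}^{N} t_{a_n}^{m}$ converge in $\Gamma(M)$ to the infinite multitwist $T = \prod_{n\in\bn} t_{a_n}^{m}$. Continuity of $\vp$ gives $\vp(T_N) \to \vp(T)$, and so
\[
\vp(t_{a_{N+1}}^{m}) \;=\; \vp(T_N)^{-1}\,\vp(T_{N+1}) \;\longrightarrow\; \mathrm{id}
\]
in $\mcg(M')$. Given any curve $c$ in $M'$, convergence to the identity in the compact-open topology forces $\vp(t_{a_n}^{m})(c) = c$ for all sufficiently large $n$. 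Since $\vp(t_{a_n}^{m})$ is a nontrivial multitwist along $\vp_*(a_n)$, this occurs precisely when $i(c,\gamma) = 0$ for every $\gamma \in \vp_*(a_n)$. Combined with the local finiteness of each individual $\vp_*(a_n)$, this shows that $c$ can meet only finitely many curves of $\vp_*(A) = \bigcup_n \vp_*(a_n)$, which is the desired local finiteness.

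I do not foresee a serious obstacle; the essential ingredients are already in place. The one point requiring care is the last step, where convergence of the sequence of multitwists $\vp(t_{a_n}^{m})$ to the identity must be translated back into a locally-finite statement about the union of their supports, using that a multitwist fixes a curve $c$ exactly when $c$ has zero geometric intersection with every curve of its support.
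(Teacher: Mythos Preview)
Your proof is correct and follows essentially the same strategy as the paper's: both use continuity to pass the convergence of the partial products \(\prod_{i=1}^n t_{a_i}^m\) through \(\vp\) and then extract local finiteness of \(\vp_*(A)\) from the existence of that limit in \(\mcg(M')\). The only packaging difference is that the paper applies Lemma~\ref{lem infinite twists} directly to an enumeration of \(\vp_*(A)\) (relying on Lemma~\ref{lem:disjoint1} for the multicurve step), whereas you instead observe \(\vp(t_{a_{n}}^m)\to\mathrm{id}\) and read off local finiteness curve by curve via Lemma~\ref{lem:disjoint-commute}; this is the same argument unwound, and your route has the minor advantage of not needing the pairwise set-disjointness of the \(\vp_*(a_i)\).
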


\begin{proof}
It readily follows from Lemma~\ref{lem:disjoint1} that \( \vp_*(A) \) is a multicurve.
Choose \( m \in \bz \) such that \( \vp(t_a^m) \) is a multitwist for each \( a \in A \),
choose an enumeration \( \{a_n\}_{n\in \bn} \) of \( A \), and let \( T_n = \prod_{i=1}^n \vp(t_{a_n}^m) \).
Then, there exists an enumeration \(  \{ b_i \}_{i\in \bn} \) of \( \vp_*(A) \) such that, for each \( n \in \bn \),  there exist \( j_n \in \bn \) and \( \{ k_1, \ldots, k_{j_n} \} \subset \bz \ssm\{0\} \) satisfying \( \vp(T_n) = \prod_{i=1}^{j_n} t_{b_i}^{k_i} \).
By continuity, \( \lim \vp(T_n) \) exists, as  \( \lim t_{a_n}^m \) exists. 
By Lemma \ref{lem infinite twists}, the existence \( \lim \vp(T_n) \)  implies that \( \vp_*(A) \) is locally finite. 
\end{proof}

It is worth noting that we have not yet put any restriction on the 2-manifold in the codomain in any of the above lemmas, but the previous lemma implies that it must be of infinite type. 

Thus far, we have established that the image of two commuting non-separating Dehn twists have powers that are multitwists about disjoint multicurves; in other words, their images have powers with disjoint support. 
The next lemma upgrades this to saying that their images have disjoint supports.
Recall that the \emph{support} of a homeomorphism \( f\co M \to M \) is the closure of the set \( \{ x \in M : f(x) \neq x\} \), and we say that two mapping classes have \emph{disjoint supports} if they have representative homeomorphisms with disjoint supports. 

Given a locally finite multicurve \( A \) on a surface \( S \), we abuse notation and write \( S \ssm A \) to refer to the surface obtained by removing the union of representatives of the curves in \( A \) that are chosen to be pairwise disjoint (this can always be done by choosing a hyperbolic metric and taking geodesic representatives). 

\begin{Lem}
\label{lem:disjoint_supports}
Let \( M \) and \( M' \) be orientable 2-manifolds with \( M \) of infinite genus, and let \( \vp \co \Gamma(M) \to \Gamma(M') \) be a continuous compact nontrivial homomorphism. 
If \( \{a, b\} \) is a non-separating multicurve, then \( \vp(t_a) \) and \( \vp(t_b) \) have disjoint supports. 
\end{Lem}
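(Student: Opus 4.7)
The plan is to leverage the disjointness of the multicurves $A := \vp_*(a)$ and $B := \vp_*(b)$ from Lemma~\ref{lem:disjoint1}, amplifying it via a change-of-coordinates argument in the spirit of that lemma's proof, combined with the compactness of $\vp$, to obtain disjoint supports.

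By Lemma~\ref{lem:disjoint1} together with Lemma~\ref{lem:Bridson} and Lemma~\ref{lem:infinite-order}, pick $m \in \bn$ so that $T_A := \vp(t_a)^m$ and $T_B := \vp(t_b)^m$ are nontrivial multitwists along the disjoint locally finite multicurves $A$ and $B$; let $N_A, N_B$ be disjoint annular neighborhoods carrying $T_A$ and $T_B$, respectively. Since $\vp$ is compact, $\vp(t_a)$ and $\vp(t_b)$ lie in $\Gamma_c(M')$ and hence have compact support. The key step is to show that $\vp(t_a)$ admits a representative that is the identity on $N_B$. Because $N_A \cap N_B = \varnothing$, the restriction $\vp(t_a)^m|_{N_B} = T_A|_{N_B}$ is trivial, so $\vp(t_a)|_{N_B}$ is a periodic homeomorphism of order dividing $m$. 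Use Lemma~\ref{lem:good_curves} to extend $\{a,b\}$ to a locally finite non-separating multicurve $\{a, b, c_1, c_2, \ldots\}$. By the change-of-coordinates principle on $M$, choose $g_n \in \Gamma_c(M)$ with $g_n(a) = a$ and $g_n(b) = c_n$ for each $n$; then $\vp(g_n)$ commutes with $\vp(t_a)$ and conjugates $T_B$ to $T_{C_n} := \vp(t_{c_n})^m$, where $C_n := \vp_*(c_n)$. Consequently $\vp(g_n)(N_B)$ is, up to isotopy, an annular neighborhood $N_{C_n}$ of $C_n$, and the restriction $\vp(t_a)|_{N_{C_n}}$ is conjugate via $\vp(g_n)$ to $\vp(t_a)|_{N_B}$. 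If $\vp(t_a)|_{N_B}$ were nontrivial, then $\vp(t_a)$ would act nontrivially on every $N_{C_n}$; but by Lemma~\ref{lem:locally_finite} the $N_{C_n}$ form a locally finite family in $M'$, so only finitely many of them can meet the compact set $\supp(\vp(t_a))$, a contradiction.

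The main obstacle is converting the key claim, together with its symmetric analog ``$\vp(t_b)|_{N_A} = \mathrm{id}$'', into genuinely disjoint supports, since a priori the supports of $\vp(t_a)$ and $\vp(t_b)$ could still overlap inside $M' \smallsetminus (N_A \cup N_B)$. To close this gap, I would apply the key argument iteratively, replacing $b$ by each of the $c_n$ in turn and, more generally, by any non-separating curve $d$ in $M$ for which $\{a, d\}$ is a non-separating multicurve; this yields $\vp(t_a)|_{N_D} = \mathrm{id}$ for the corresponding annular neighborhood $N_D$ of $\vp_*(d)$. Varying $d$ over a sufficiently rich family of such curves (again invoking Lemma~\ref{lem:good_curves} to build locally finite configurations and Lemma~\ref{lem:locally_finite} to control their images) squeezes $\supp(\vp(t_a))$ into an arbitrarily thin neighborhood of $A$; symmetrically, $\supp(\vp(t_b))$ is confined to a thin neighborhood of $B$. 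Since $A$ and $B$ are disjoint, these neighborhoods can be chosen disjoint, delivering disjoint supports. The delicate point is justifying that the admissible family of $d$'s is rich enough for the squeeze to terminate, which is where compactness of the supports (a consequence of $\vp$ being compact) combines with local finiteness to exclude any residual contribution away from $A$.
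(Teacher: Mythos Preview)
Your first step---showing that $\vp(t_a)$ is trivial ``on $N_B$'' via the change-of-coordinates/local-finiteness trick---is close in spirit to part of the paper's argument, but the notion of restriction is doing more work than you justify. Triviality of the induced element of $\mcg(N_B)$ (not rel boundary) does \emph{not} imply that $\vp(t_a)$ has a representative equal to the identity on $N_B$: a Dehn twist about a curve in $B$ already restricts trivially to $\mcg(N_B)$ yet is certainly not supported off $N_B$. What the argument actually delivers is that $\vp(t_a)$ fixes each curve of $B$ and does not flip the sides of any $b'\in B$; that is strictly weaker than the stated ``key step''.

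The more serious problem is the squeezing. You acknowledge that the supports could still meet inside $M'\smallsetminus(N_A\cup N_B)$ and propose to eliminate this by running the key step over a ``rich'' family of $d$'s with $\{a,d\}$ non-separating, so that the $N_D = N_{\vp_*(d)}$ eventually confine $\supp(\vp(t_a))$ to a thin neighborhood of $A$. But nothing proved so far says the family $\{\vp_*(d)\}$ fills up $M'\smallsetminus A$, or even escapes a fixed compact region; local finiteness of the images (Lemma~\ref{lem:locally_finite}) only tells you they do not accumulate, not that they spread out. So the squeeze need not terminate, and the argument stalls exactly at the point you flagged as ``delicate''.

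The paper closes this gap by a different, structural observation. Since $\vp(t_a)^m=T_A$ is supported on $A$, the restriction of $\vp(t_a)$ to each component of $M'\smallsetminus A$ has finite order; on each \emph{infinite-type} component this restriction lies in $\Gamma_c$, which is torsion free (Lemma~\ref{lem:no-torsion}), so it is the identity there. Hence $\supp(\vp(t_a))$ is contained in the union $F'_a$ of the \emph{finite-type} complementary components together with annuli about $A$. Only then is the change-of-coordinates/local-finiteness argument invoked, but to show $B\cap F'_a=\varnothing$ (your idea, applied to a compact target rather than to an annular one). By symmetry $A\cap F'_b=\varnothing$, and an elementary boundary argument forces $F'_a\cap F'_b=\varnothing$, yielding disjoint supports. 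The missing ingredient in your proposal is precisely this confinement to $F'_a$ via torsion-freeness on infinite-type pieces; without it, the annular information you extract cannot be upgraded to a statement about the full support.
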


\begin{proof}
Let \( A' = \vp_*(a) \), let \( B' = \vp_*(b) \), and let \( M'_a = M' \ssm A'  \). 
By Lemma~\ref{lem:disjoint1}, we know that \( A' \) and \( B' \) are disjoint multicurves. 
As \( A' \) is a finite multicurve, \( M_a' \) has finitely many components.
Let \( f \in \Gamma_c(M) \) commute with \( t_a \).
It follows that \( \vp(f)(A') = A' \), and hence permutes the components of \( M_a' \).
As \( \vp(f) \in \Gamma_c(M') \), it must be the case that \( \vp(f) \) fixes each infinite-type component of \( M_a' \). 

Let \( \Sigma \) be an infinite-type component of \( M'_a \). 
Letting \( \rm{Stab}(A') \) denote the stabilizer of \( A' \) in \( \Gamma(M') \), we see that there exists a restriction homomorphism \( r: \rm{Stab}(A') \to \Gamma(\Sigma) \) given by taking the restriction of an element of \( \rm{Stab}(A') \) to \( \Sigma \). 
Under this homomorphism, any multitwist about \( A' \) is trivial, and hence, \( (r\circ\vp)(t_a) \) has finite order in \( \Gamma(\Sigma) \), as \( \vp(t_a) \) is a root of a multitwist about \( A' \). 
But, \( (r\circ \vp)(t_a) \in \Gamma_c(\Sigma) \), and hence is trivial by Lemma~\ref{lem:no-torsion}, allowing us to conclude that \( \vp(t_a) \) has a representative that restricts to the identity on the interior of any infinite-type component of \( M_a' \). 

From the above, we observe that if each component of \( M_a' \) has infinite type, then \( \vp(t_a) \) is a multitwist.
As \( \vp(t_b) \) is conjugate to \( \vp(t_a) \), it is also a multitwist, and as \( A' \) and \( B' \) are disjoint multicurves, \( \vp(t_a) \) and \( \vp(t_b) \) have disjoint supports, as desired. 

The previous paragraph dealt with the case in which \( M_a' \) has no finite-type component, so we may now assume that \( M_a' \) contains at least one finite-type component.
Let \( F_a' \) be the closures in \( M' \) of the finite-type components of \( M_a' \). 
Then there exists a representative of \( \vp(t_a) \) supported in the union of \( F_a' \) with annuli about the curves in \( A' \). 

Let \( b' \in B' \). 
As \( A' \) and \( B' \) are disjoint, \( b' \) is either an essential curve in \( F_a' \) or in \( M \ssm F_a' \). 
Let \( C = \{a, b, c_1, c_2, \ldots \} \) be a locally finite non-separating multicurve, which is guaranteed to exist by Lemma~\ref{lem:good_curves}.
By Lemma~\ref{lem:locally_finite}, \( \vp_*(C) \) is a locally finite multicurve in \( M' \). 
So, there exists \( c_i \in C \) such that \( \vp_*(c_i) \) is disjoint from \( F_a' \). 
We can then choose \( f \in \Gamma_c(M) \) such that \( f(a) = a \) and \( f(b) = c_i \), so that \( \vp(f)(B') = \vp_*(c_i) \); in particular, \( \vp(f)(b') \subset M \ssm F_a' \). 
As \( f(a) = a \), we have that \( f \) commutes with \( t_a \). 
From above, it follows  that \( \vp(f)(F_a') = F_a' \).
Therefore, \( b' \) cannot be an essential curve in \( F_a' \), as \( \vp(f)(F_a') = F_a' \) and \( \vp(f)(b')  \subset M \ssm F_a' \), implying no curve in \( B' \) is contained in \( F_a' \).

Defining \( F_b' \) analogously, a similar argument shows that no curve in \( A' \) is contained in \( F_b' \). 
As \( A' \) and \( B' \) are disjoint multicurves, this forces \( F_a' \) and \( F_b' \) to be disjoint, and hence, the supports of \( \vp(t_a) \) and \( \vp(t_b) \) are disjoint. 
\end{proof}

We can now eliminate the possibility of roots. 

\begin{Lem}
\label{lem:multitwist}
Let \( M \) and \( M' \) be orientable 2-manifolds with \( M \) of infinite genus, and let \( \vp \co \Gamma(M) \to \Gamma(M') \) be a continuous compact nontrivial homomorphism. 
If \( t \in \Gamma(M) \) is a non-separating Dehn twist, then \( \vp(t) \) is a multitwist. 
\end{Lem}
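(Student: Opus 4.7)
The plan is to suppose for contradiction that $\rho := \vp(t)$ is a proper root of a multitwist and derive a contradiction using continuity, following the style of the proof of Lemma~\ref{lem:disjoint1}. Specifically, assume $\rho^m = T$ is a multitwist about $A = \vp_*(t)$ with $m \geq 2$ minimal. The proof of Lemma~\ref{lem:disjoint_supports} already shows that $\rho$ has a representative supported in $F \cup \mathcal{N}(A)$, where $F$ is the union of the finite-type components of $M' \setminus A$ and $\mathcal{N}(A)$ is a pairwise disjoint collection of small annular neighborhoods of the curves of $A$. Since $T$ is supported in $\mathcal{N}(A)$, the restriction of $\rho$ to $F$ has finite order dividing $m$ in $\mcg(F)$ taken modulo boundary Dehn twists; minimality of $m$ forces this order to be exactly $m$. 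In particular, assuming $\rho$ is not a multitwist, $\rho|_F$ must be non-trivial on some connected component of $F$.

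Next, I would use the change-of-coordinates principle together with Lemma~\ref{lem:good_curves} to inductively construct an infinite locally finite non-separating multicurve $\{c_i\}_{i \geq 1}$ in $M$ with $c_1 = a$ and each $c_i = h_i(a)$ for some $h_i \in \Gamma_c(M)$. Setting $\rho_i := \vp(t_{c_i}) = \vp(h_i)\rho\vp(h_i)^{-1}$, each $\rho_i$ inherits the structural properties of $\rho$: it is a proper $m$-th root of a multitwist $T_i$ about $A_i := \vp_*(c_i)$, with an order-$m$ action on some component of the finite-type part of $M'\setminus A_i$. By Lemma~\ref{lem:disjoint_supports}, the supports of the $\rho_i$ are pairwise disjoint, and by Lemma~\ref{lem:locally_finite}, $\bigcup_i A_i$ is locally finite in $M'$. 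Local finiteness of $\{c_i\}$ ensures $\prod_i t_{c_i}$ converges in $\Gamma(M)$, and so by continuity of $\vp$, $\prod_i \rho_i$ converges in $\Gamma(M')$.

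The contradiction should come from producing a second sequence in $\Gamma(M)$ that converges but whose image under $\vp$ does not, in direct analogy to how the common multitwist factor $T_X^n$ drove divergence in the proof of Lemma~\ref{lem:disjoint1}. The hope is that the common finite-order rotation data inherent in each $\rho_i$ accumulates across a suitably chosen product of Dehn twists, violating continuity. I expect the main obstacle to be making this ``accumulation of rotation parts'' precise: unlike multitwists, a root of a multitwist does not admit a canonical decomposition into an annular twist part and a rotation part, and the rotation parts of the $\rho_i$ live in the pairwise disjoint finite-type components of the $M' \setminus A_i$, so they cannot directly combine. Overcoming this likely requires either an explicit classification of roots of multitwists as fractional Dehn twists composed with a cyclic rotation (which would isolate the rotation data in a controlled way), or an indirect argument using the braid relations $t_a t_b t_a = t_b t_a t_b$ for curves with $i(a,b) = 1$ to transfer the rotation data between different non-separating curves and produce a common accumulating factor.
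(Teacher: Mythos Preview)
Your proposal has a genuine gap, and it is precisely the one you flag yourself. In the proof of Lemma~\ref{lem:disjoint1} the divergence came from a \emph{common} factor $T_X$ shared by all the multitwists; here, Lemma~\ref{lem:disjoint_supports} tells you the $\rho_i$ have pairwise disjoint supports, so any product $\prod_i \rho_i$ is automatically a locally finitely supported product and converges. There is no shared piece to accumulate. Your suggested fixes do not help: a structure theorem for roots of multitwists would only confirm that the rotation data of each $\rho_i$ is confined to its own finite-type region $F_i$, and braid relations concern twists about \emph{intersecting} curves, whereas you are trying to combine twists about pairwise disjoint curves. I do not see a way to manufacture a common accumulating factor within this framework.

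The paper's argument avoids continuity entirely at this step and instead uses the \emph{lantern relation}. Embed a four-holed sphere in $M$ so that its boundary curves $a_1,\ldots,a_4$ and the three interior curves $a_5,a_6,a_7$ are all non-separating and, crucially, so that $\{a_1,a_i\}$ is a non-separating multicurve for every $i\ne 1$. The lantern relation gives
\[
t_{a_1} \;=\; t_{a_5}t_{a_6}t_{a_7}\,(t_{a_2}t_{a_3}t_{a_4})^{-1}.
\]
Applying $\vp$ and Lemma~\ref{lem:disjoint_supports}, each $\vp(t_{a_i})$ with $i\ne 1$ has a representative supported on a subsurface $\Sigma_i'$ disjoint from the support $\Sigma_1'$ of $\vp(t_{a_1})$. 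But the relation forces $\vp(t_{a_1})$ to be supported in $\bigcup_{i=2}^{7}\Sigma_i'$, which is disjoint from $\Sigma_1'$. A mapping class supported simultaneously in a set and in its complement must be supported in a union of annuli, so $\vp(t_{a_1})$ is a multitwist. Conjugacy finishes the proof for an arbitrary non-separating Dehn twist.
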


\begin{proof}
Every surface of genus at least three contains an embedded 4-holed sphere \( \Sigma \) in which each of its boundary components and each curve contained in \( \Sigma \) is non-separating in the ambient surface.  
Let \( a_1 \), \( a_2 \), \( a_3 \), and \( a_4 \) be the boundary components of \( \Sigma \).
By the lantern relation (see \cite[Proposition~5.1]{Primer}), there exist curves \( a_5 \), \( a_6 \), and \( a_7 \) in \( \Sigma \) such that 
\[
t_1 = t_5t_6t_7 (t_2t_3t_4)^{-1},
\]
where \( t_i \) is the Dehn twist about \( a_i \) and such that \( \{a_1, a_i\} \) is a non-separating multicurve for each \( i \neq 1 \).

By Lemma~\ref{lem:disjoint_supports}, for each \( i \), we can choose a representative of \( \vp(t_i) \) in \( M ' \) supported in a subsurface \( \Sigma_i' \) such that \( \Sigma_1' \) and \( \Sigma_i' \) are disjoint whenever \( i \neq 1 \). 
In particular, by the lantern relation, \( \vp(t_1) \) has a representative supported in \( \bigcup_{i=2}^7 \Sigma_i' \), which is disjoint from \( \Sigma_1' \). 
The only way this is possible is if \( \vp(t_1) \) is supported in a union of annuli, implying \( \vp(t_1) \) is a multitwist. 
But, \( t_1 \) is conjugate to \( t \), and hence \( \vp(t) \) is a multitwist. 
\end{proof}

So far, we have established that the image of a Dehn twist is a multitwist under the given hypotheses.  
We now want to show that the image is in fact a Dehn twist.  
To accomplish this, we will need to use irreducibility and the non-planarity of the ends, which has not been the case up to this point.

First, we will need a result of the second author characterizing when multitwists are braided, as well as a quick lemma.
We say two multitwists \( T_1 \) and \( T_2 \) are \emph{braided} if they satisfy the braid relation, that is, if \( T_1T_2T_1 = T_2T_1T_2 \). 
If \( a \) and \( b \) are two simple closed curves, then \( i(a,b) = 1 \) if and only if \( t_a \) and \( t_b \) are braided (see \cite[Propositions~3.11 \& 3.13]{Primer}).  
The following theorem generalizes this fact to multitwists. 

\begin{Thm}[\!{\cite{DePoolBraided}}]
\label{thm:depool}
Let \( S \) be an orientable surface.
If \( T_1 \) and \( T_2 \) are braided multitwists, then there exist \( k \in \bn \), simple closed curves \( a_1, \ldots, a_k \), \( b_1, \ldots, b_k \), a multitwist $T$ fixing every curve $a_i, b_j$ for $i,j \in \{1, \ldots, k\}$, and \( n_i \in \{-1,1\} \) for \( i \in \{1, \ldots, k \} \) such that
\begin{align*}
T_1 &= T \prod_{i=1}^k t_{a_i}^{n_i} \\
T_2 &= T \prod_{i=1}^k t_{b_i}^{n_i}
\end{align*}
and such that for \( i,j \in \{1, \ldots, k\} \)
\[
i(a_i,b_j) = \left\{
    \begin{array}{l l}
    1 & \text{if } i = j \\
    0 & \text{if } i \neq j
    \end{array} \right..
\]
\qed
\end{Thm}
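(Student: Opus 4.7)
Let $A = \supp(T_1)$ and $B = \supp(T_2)$, and write $T_1 = \prod_{c \in A} t_c^{m_c}$ and $T_2 = \prod_{c \in B} t_c^{n_c}$ with nonzero exponents. The plan is to first peel off a common ``inert'' part of the two supports, then show that in the residual configuration every curve of $A$ must interact nontrivially with $B$, and finally extract the chain matching from local braid relations.

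For the peeling step, any $c \in A \cap B$ is disjoint from every other curve in $A \cup B$ (since both $A$ and $B$ are multicurves), so $t_c$ commutes with the multitwists $U_1, U_2$ obtained from $T_1, T_2$ by deleting their $t_c$-factors. Substituting $T_1 = t_c^{m_c} U_1$ and $T_2 = t_c^{n_c} U_2$ into $T_1 T_2 T_1 = T_2 T_1 T_2$ and cancelling yields
\[ t_c^{m_c - n_c} \;=\; U_2 U_1 U_2 \, (U_1 U_2 U_1)^{-1}. \]
Evaluating on a curve $\gamma$ that crosses $c$ once but is disjoint from $(A \cup B) \setminus \{c\}$---such a $\gamma$ exists by local finiteness of $A \cup B$---forces $m_c = n_c$, after which the right-hand side is trivial and $U_1, U_2$ are themselves braided. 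Iterating, the common contribution can be collected into a single multitwist $T$, and we may assume $A \cap B = \varnothing$. An analogous substitution shows that in this reduced setting no curve of $A$ can be disjoint from all of $B$, for otherwise the same identity would force the corresponding exponent to vanish.

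The technical crux is to show that each residual $a \in A$ meets exactly one $b \in B$, and does so with $i(a,b) = 1$. If $i(a,b) \geq 2$, or if $a$ meets two distinct curves $b, b' \in B$, one passes to a regular neighborhood of the relevant configuration to work in a finite-type subsurface, and uses the intersection inequality $i(t_a^n(c), b) \geq |n| \, i(a,c) \, i(a,b) - i(c,b)$ (already exploited in the proof of Lemma~\ref{lem infinite twists}) together with Lemma~\ref{lem:disjoint-commute} to track the action of both sides of the braid relation on a test curve supported near the configuration, producing a contradiction. This is the most delicate step, since one must rule out the possibility that the remaining twist factors conspire to restore the relation; a natural remedy is to further restrict to a minimal ``bad'' subconfiguration in which no such hidden cancellation is possible.

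Once the bijective matching $a_i \leftrightarrow b_i$ with $i(a_i, b_i) = 1$ is established, a regular neighborhood of $a_i \cup b_i$ is a one-holed torus $\Sigma_{1,1}$, and every other twist factor of $T_1$ or $T_2$ acts trivially on it. The braid relation restricts on this neighborhood to $t_{a_i}^{m_i} t_{b_i}^{n_i} t_{a_i}^{m_i} = t_{b_i}^{n_i} t_{a_i}^{m_i} t_{b_i}^{n_i}$ in $\mcg(\Sigma_{1,1}) \cong \mathrm{SL}_2(\bz)$, and a direct $2 \times 2$ matrix computation forces $m_i = n_i \in \{-1, +1\}$. Assembling these contributions with the multitwist $T$ from the peeling step yields the decomposition claimed in the theorem.
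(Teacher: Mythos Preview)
The paper does not contain a proof of this statement: Theorem~\ref{thm:depool} is quoted from the external reference \cite{DePoolBraided} and closed with a \texttt{\textbackslash qed}, so there is nothing in the present paper to compare your argument against.

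That said, your outline has genuine gaps that would need to be filled before it could stand as a proof. In the peeling step, you assert the existence of a curve \( \gamma \) meeting \( c \) once and disjoint from every curve in \( (A\cup B)\smallsetminus\{c\} \); local finiteness of \( A \) and of \( B \) separately does not furnish such a \( \gamma \), since the remaining curves may well separate \( c \) from any transversal (think of \( c \) sitting inside a subsurface bounded by curves of \( A\cup B \)). A cleaner route here is to observe that \( U_1 \) and \( U_2 \) have representatives supported away from an annular neighborhood of \( c \), so the right-hand side does too, forcing the power of \( t_c \) on the left to be trivial by a disjoint-support argument. More seriously, the step you yourself flag as the ``technical crux''---ruling out \( i(a,b)\ge 2 \) or \( a \) meeting two curves of \( B \)---is only sketched: invoking the intersection inequality and ``restricting to a minimal bad subconfiguration'' is a plan, not a proof, and the possibility of cancellation among the remaining twist factors is exactly what makes this result nontrivial. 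Your final \( \mathrm{SL}_2(\bz) \) computation on the one-holed torus is fine once the matching is in hand.
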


\begin{Lem}
\label{lem:cyclic}
Let \( M \) be a nonplanar orientable 2-manifold,  and let \( \vp \co \Gamma_c(M) \to H \) be a homomorphism  to a group $H$. 
If there exist curves \( a \) and \( b \) in \( M \) such that \( i(a,b) = 1 \) and \( \vp(t_a) = \vp(t_b) \), then \( \vp(\Gamma_c(M)) \) is cyclic.
Moreover,  if \( M \) has genus at least three, then \( \vp \) is trivial. 
\end{Lem}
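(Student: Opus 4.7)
The plan is to prove that, under the hypothesis $\vp(t_a)=\vp(t_b)=:T$, one has $\vp(t_c)=T$ for every non-separating simple closed curve $c$ in $M$; the cyclic conclusion then follows because $\Gamma_c(M)$ is generated by Dehn twists about non-separating curves, and the moreover statement is a quick consequence of the lantern relation.

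The key auxiliary step is a compactly-supported change-of-coordinates statement: for every pair $(c,c')$ of non-separating curves in $M$ with $i(c,c')=1$, there exists $f\in\Gamma_c(M)$ with $f(a)=c$ and $f(b)=c'$. To see this I would pick a compact subsurface $\Sigma\subset M$ containing $a,b,c,c'$ with each component of $\partial\Sigma$ essential and separating in $M$, enlarging $\Sigma$ if necessary by attaching regular neighborhoods of suitable dual arcs so that all four curves are non-separating inside $\Sigma$. Then $\Sigma\smallsetminus(a\cup b)$ and $\Sigma\smallsetminus(c\cup c')$ are connected compact surfaces of the same genus and with the same number of boundary components, hence are homeomorphic by the classification of compact surfaces; the usual change of coordinates in $\mcg(\Sigma,\partial\Sigma)$ then produces the required $f$, which extends by the identity on $M\smallsetminus\Sigma$. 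From $f t_a f^{-1}=t_c$ and $f t_b f^{-1}=t_{c'}$, together with $\vp(t_a)=\vp(t_b)=T$, one reads off $\vp(t_c)=\vp(f)\,T\,\vp(f)^{-1}=\vp(t_{c'})$.

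Next I would invoke connectivity of the graph whose vertices are non-separating curves in $M$ and whose edges are pairs with geometric intersection number one. This is standard for any orientable 2-manifold of positive genus and reduces to the well-known finite-type statement by passing to a compact subsurface in which the two curves of interest remain non-separating. Chaining $a$ to an arbitrary non-separating curve $c$ through such edges and applying the orbit claim along each edge yields $\vp(t_c)=T$. Since $M$ is non-planar, $\Gamma_c(M)$ is generated by Dehn twists about non-separating curves: each $\mcg(\Sigma,\partial\Sigma)$ with $\partial\Sigma$ essential and separating in $M$ is generated by twists about curves non-separating in $\Sigma$ by Humphries's theorem, and any such curve is automatically non-separating in $M$ because $\Sigma\smallsetminus c$ meets every boundary component of $\Sigma$ and $M\smallsetminus\Sigma$ is connected through those boundary components. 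Therefore $\vp(\Gamma_c(M))$ is generated by $T$ and is cyclic.

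For the moreover part, assume $M$ has genus at least three. Then $M$ contains a compact embedded 4-holed sphere $R$ whose four boundary curves $x_1,x_2,x_3,x_4$ and three interior lantern curves $y_1,y_2,y_3$ are all non-separating in $M$ (this is the configuration used by Harer in the computation of $H_1(\mcg)$, and exists whenever the genus is at least three). The lantern relation $t_{x_1}t_{x_2}t_{x_3}t_{x_4}=t_{y_1}t_{y_2}t_{y_3}$ holds in $\mcg(R,\partial R)\subset\Gamma_c(M)$, so applying $\vp$ gives $T^4=T^3$, forcing $T=1$ and hence the triviality of $\vp$. The main obstacle is the compactly-supported orbit claim from the second paragraph: transitivity on ordered pairs with intersection one is classical inside a fixed finite-type surface, but one must first arrange, via enlargement of $\Sigma$, that all four curves are simultaneously non-separating in a single compact subsurface of $M$.
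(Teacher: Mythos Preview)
Your argument is correct and follows essentially the same route as the paper's proof: the change-of-coordinates step showing $\vp(t_c)=\vp(t_{c'})$ whenever $i(c,c')=1$, the chaining via the intersection-one graph on non-separating curves, and the lantern relation for the moreover clause are exactly what the paper does (the paper phrases the last step as ``$\Gamma_c(M)$ has trivial abelianization'', but this is your $T^4=T^3$ computation in disguise). One small wording quibble: in your justification that a curve non-separating in $\Sigma$ stays non-separating in $M$, the complement $M\smallsetminus\Sigma$ need not itself be connected; the point is that each of its components is glued to the connected surface $\Sigma\smallsetminus c$ along a boundary curve of $\Sigma$, so $M\smallsetminus c$ is connected.
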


\begin{proof}
If \( c \) and \( d \) are curves such that \( i(c,d) = 1 \), then there exists \( g \in \Gamma_c(M) \) such that \( g(a) = c \) and \( g(b) = d \). 
Therefore, \( \vp(t_c) = \vp(g\, t_a\,  g^{-1}) = \vp(g \, t_b \,  g^{-1}) = \vp(t_d) \). 
Now, given any two non-separating curves \( a' \) and \( b' \), there exist curves \( c_0, c_1, \ldots, c_k \) such that \( i(c_i, c_{i+1}) = 1 \), \( c_0 = a' \), and \( c_k = b' \) (see \cite[Lemma~4.5]{Primer}). 
It follows that \( \vp(t_{c_i})= \vp(t_{c_{i+1}}) \), and hence \( \vp(t_{a'}) = \vp(t_{b'}) \). 
As \( \Gamma_c(M) \) is generated by non-separating Dehn twists, we have that \( \vp(\Gamma_c(M)) \) is cyclic. 
Now, if the genus of \( M \) is at least three, then \( \Gamma_c(M) \) has trivial abelianization---this is an application of the lantern relation, see \cite[Section~5.1]{Primer} for a proof in the finite-type setting that extends to the infinite-type setting verbatim. 
Now, the image of \( \vp \) is abelian, as it is cyclic, and hence \( \vp \) is trivial. 
\end{proof}

We say two chains \( \mathcal C \) and \( \mathcal C' \) are isomorphic if there exists a bijection \( \psi \co \mathcal C \to \mathcal C' \) such that \( i(\psi(a), \psi(b)) = i(a,b) \) for all \( a, b \in \mathcal C \); we call \( \psi \) a \emph{chain isomorphism}. 

\begin{Prop}
\label{prop:twist2twist}
Let \( M \) and \( M' \) be orientable 2-manifolds with \( M \) of infinite genus and with no planar ends,  and let \( \vp \co \Gamma(M) \to \Gamma(M') \) be a continuous compact irreducible homomorphism. 
\begin{enumerate}[(1)]
\item If \( \mathcal A \) is an Alexander chain in \( M \) such that \( \{t_a\}_{a\in\mathcal A} \) generates \( \Gamma_c(M) \), then \( \mathcal A' = \bigcup_{a \in A} \vp_*(a) \) is an Alexander chain in \( M' \). 
Moreover, the map \( a \mapsto \vp_*(a) \) is a chain isomorphism from \( \mathcal A \) to \( \mathcal A' \).

\item If \( t \in \Gamma(M) \) is a non-separating Dehn twist, then \( \vp(t) \) is a non-separating Dehn twist. 
\end{enumerate}
\end{Prop}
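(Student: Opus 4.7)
The plan is to first establish part (1) by analyzing the assignment $\vp_*$ on the Alexander chain $\mathcal A$ provided by Theorem~\ref{thm:generators}, and then to deduce part (2) as a consequence.

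Setup: For each $a \in \mathcal A$, Lemmas~\ref{lem:infinite-order}, \ref{lem:Bridson}, and~\ref{lem:multitwist} imply $\vp(t_a)$ is a nontrivial multitwist along the multicurve $\vp_*(a)$, so $\vp(t_a) = \prod_{c \in \vp_*(a)} t_c^{k_c}$ with all $k_c \neq 0$. The conjugation formula from Lemma~\ref{lem:fact37} makes $\vp_*$ equivariant: $\vp_*(g(a)) = \vp(g)(\vp_*(a))$ for all $g \in \Gamma(M)$. Since $\Gamma_c(M)$ acts transitively on non-separating simple closed curves by the change of coordinates principle, the cardinality $m := |\vp_*(a)|$ is the same for every $a \in \mathcal A$. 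For $a,b \in \mathcal A$ with $i(a,b) = 0$, the Alexander chain is arranged so that $\{a,b\}$ is a non-separating multicurve, and Lemma~\ref{lem:disjoint1} yields $\vp_*(a)$ and $\vp_*(b)$ disjoint. For $a,b$ with $i(a,b) = 1$, Theorem~\ref{thm:depool} applied to the braided pair $(\vp(t_a), \vp(t_b))$ produces a common multitwist $T$ and paired curves $\{a_i', b_i'\}_{i=1}^k$ satisfying $i(a_i', b_j') = \delta_{ij}$, with $\vp_*(a) = \mathrm{supp}(T) \sqcup \{a_1', \ldots, a_k'\}$ and symmetrically for $\vp_*(b)$.

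The crux is showing $m = 1$. First I would prove that $T$ is trivial for every braided pair $(a,b)$ in $\mathcal A$. Given such a pair, pick a second neighbor $b' \in \mathcal T(\mathcal A)$ of $a$ with $i(b,b') = 0$ (such $b'$ exists at all but the leaf vertices of $\mathcal T(\mathcal A)$; leaves can be handled by conjugation via $\Gamma_c(M)$-transitivity). If $c \in \mathrm{supp}(T) \subseteq \vp_*(a) \cap \vp_*(b)$, then applying Theorem~\ref{thm:depool} to $(\vp(t_a), \vp(t_{b'}))$ puts $c$ either into the common part (giving $c \in \vp_*(b')$, contradicting $\vp_*(b) \cap \vp_*(b') = \varnothing$ by Lemma~\ref{lem:disjoint1}), or paired with some $d \in \vp_*(b')$ with $i(c,d) = 1$ (contradicting that the multicurves $\vp_*(b)$ and $\vp_*(b')$ are disjoint). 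Hence $T = 1$, giving $\vp(t_a) = \prod_{i=1}^m t_{a_i'}^{n_i}$ and $\vp(t_b) = \prod_{i=1}^m t_{b_i'}^{n_i}$ with matched exponents. Now, assuming $m \geq 2$, the DePool pairing propagates equivariantly across the connected tree $\mathcal T(\mathcal A)$ to give a consistent $m$-sheeted labeling $\vp_*(a) = \{a_{[1]}', \ldots, a_{[m]}'\}$; each $\mathcal A_{[i]} := \{a_{[i]}' : a \in \mathcal A\}$ is itself a chain in $M'$ with the intersection pattern of $\mathcal A$, filling pairwise disjoint subsurfaces $\Sigma_1, \ldots, \Sigma_m$ of $M'$ that the finite-index label-preserving subgroup of $\vp(\Gamma(M))$ stabilizes setwise. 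A topological analysis of the essential curves separating the $\Sigma_i$'s then extracts a single curve or arc fixed by every element of $\vp(\Gamma(M))$, contradicting irreducibility. So $m = 1$ and we write $\vp_*(a) = \{\psi(a)\}$.

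With $m = 1$, the map $\psi\co \mathcal A \to \mathcal A'$ is injective and preserves intersection numbers $0$ and $1$, so it is a chain isomorphism; each $\psi(a)$ is non-separating because the geometric intersection of a separating curve with any simple closed curve is even, yet $i(\psi(a), \psi(b)) = 1$ for any neighbor $b$ of $a$ in $\mathcal T(\mathcal A)$. The graph $\mathcal T(\mathcal A')$ is a tree since it is isomorphic to $\mathcal T(\mathcal A)$ via $\psi$, and $\mathcal A'$ is filling: any curve or arc $d$ disjoint from every $\psi(a)$ would be fixed by every $\vp(t_a)$, hence by all of $\vp(\Gamma_c(M))$, and by continuity by all of $\vp(\Gamma(M))$, contradicting irreducibility. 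For part (2), equivariance gives $\vp(t_a) = t_{\psi(a)}^n$ for a fixed integer $n \neq 0$; applying $\vp$ to a lantern relation in a four-holed sphere $P \subset M$ whose seven relevant curves lie in $\mathcal A$ (arranged via Theorem~\ref{thm:generators}), and noting that $\psi$ sends these to a valid lantern configuration in $M'$ since $\psi$ preserves intersection patterns, the comparison of the $n$-th power relation with the lantern relation in $M'$ forces $n = \pm 1$, so $\vp(t_a)$ is a non-separating Dehn twist. The main obstacle will be the step ruling out $m \geq 2$: precisely constructing a $\vp(\Gamma(M))$-invariant curve or arc from the multi-sheeted structure requires a careful topological analysis of how the finite-index label-preserving subgroup acts on the boundaries separating the subsurfaces $\Sigma_i$.
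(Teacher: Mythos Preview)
Your overall strategy is sound up through showing that each $\vp(t_a)$ is a nontrivial multitwist, but the two places where you depart from the paper's line of argument both contain real gaps.

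\textbf{Ruling out $m \geq 2$.} You yourself flag this as the main obstacle, and the sketch you give is not close to a proof: the consistency of an $m$-sheeted labeling across non-adjacent edges of $\mathcal T(\mathcal A)$ is delicate, there is no reason the filled subsurfaces $\Sigma_1,\ldots,\Sigma_m$ should be pairwise disjoint, and extracting a single $\vp(\Gamma(M))$-invariant curve or arc from their boundaries would require substantial additional work that you do not supply. The paper avoids all of this with a short combinatorial argument. It first proves, \emph{before} knowing $m=1$, that $\mathcal T(\mathcal A')$ is a tree: the contrapositive ``$i(a',b')=1 \Rightarrow i(a,b)=1$'' together with Theorem~\ref{thm:depool} shows that any cycle in $\mathcal T(\mathcal A')$ projects to a non-backtracking closed walk in the tree $\mathcal T(\mathcal A)$, which is impossible. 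Once $\mathcal T(\mathcal A')$ is known to be a tree, $|\vp_*(a)|>1$ is ruled out immediately: a path in $\mathcal T(\mathcal A')$ joining two distinct elements of $\vp_*(a)$ again projects to a non-backtracking closed walk in $\mathcal T(\mathcal A)$ with coinciding endpoints. No analysis of subsurfaces or invariant curves is needed, and your separate step showing the common multitwist $T$ is trivial becomes unnecessary.

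\textbf{The exponent $n=\pm 1$.} The lantern argument as stated cannot work: the seven curves in a lantern relation cannot all lie in $\mathcal A$, since the three interior curves of the four-holed sphere pairwise intersect in two points, whereas a chain allows geometric intersection at most one. Even if you extend $\psi$ to all non-separating curves by equivariance, you have only established that $\psi$ preserves intersection numbers $0$ and $1$, so there is no reason the images should form a lantern configuration in $M'$. The paper instead reads $n\in\{-1,1\}$ directly from Theorem~\ref{thm:depool}: once $\vp(t_a)$ and $\vp(t_b)$ are known to be powers of single Dehn twists about curves meeting once, they are braided multitwists, and the exponents $n_i$ in that theorem already lie in $\{-1,1\}$, so $\vp(t_a)$ is a Dehn twist with no further relation needed.
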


\begin{proof}
Apply Theorem~\ref{thm:generators} to obtain an Alexander chain \( \mathcal A \) in \( M \) such that \( \{t_a\}_{a\in\mathcal A} \) generates \( \Gamma_c(M) \), and let \( \mathcal A' = \bigcup_{a \in A} \vp_*(a) \).
Note that, by Lemma~\ref{lem:multitwist}, the images $\varphi(t_a)$ of the \( t_a \) are multitwists. 

Let \( a, b \in \mathcal A \), let \( a' \in \vp_*(a) \), and let \( b' \in \vp_*(b) \). 
If \( i(a,b) = 0 \), then $t_a$ and $t_b$ commute, and  $\varphi(t_a)$ and $\varphi(t_b)$ are commuting multitwists. 
Therefore, by Lemma~\ref{lem:disjoint-commute}, \( i(a',b') = 0 \). 
We will use the contrapositive of this fact multiple times, so we record it here for reference:
\begin{equation}
\label{eq1}
\text{if }\, i(a',b') \neq 0 \text{ then   } \,i(a,b) = 1. \tag{\(\star\)}
\end{equation}
Now, if \( i(a,b) \neq 0 \), then \( i(a,b) = 1 \), implying \( t_a \) and \( t_b \) are braided. 
It follows that the multitwists $\varphi(t_a)$ and $\varphi(t_b)$ are braided; hence, by Theorem~\ref{thm:depool}, \( \vp_*(a) \cup \vp_*(b) \) is a chain, so  \( i(a',b') \in \{0,1\} \). 
In either case, we have that \( i(a',b') \in \{0,1\} \).

To show that \( \mathcal A' \) is a chain, it is left to show that it is locally finite.
As \( \mathcal A \) is given by the construction in Theorem~\ref{thm:generators}, we can write \( \mathcal A =  A_1 \cup  A_2 \cup  A_3 \), where \( A_1 \), \( A_2 \), and \( A_3 \) are pairwise disjoint locally finite multicurves (e.g., take \( A_1 \) to be the red curves, \( A_2 \) to be the blue curves, and \( A_3 \) to be  the remaining curves, which are purple in Figure~\ref{fig:alexander-chain}). 
By Lemma~\ref{lem:locally_finite}, \( \vp_*(A_i) \) is locally finite; hence, \( \mathcal A' = \vp_*(A_1) \cup \vp_*(A_2) \cup \vp_*(A_3) \) expresses \( \mathcal A' \) as a finite union of locally finite sets, implying \( \mathcal A' \) is itself locally finite. 
Therefore, \( \mathcal A' \) is a chain.

Recall that \( \mathcal T(\mathcal A') \) is the graph whose vertices are the curves of \( \mathcal A' \) and that two vertices are adjacent if they have geometric intersection one; to show that \( \mathcal A' \) is tree-like, we must show that \( \mathcal T(\mathcal A') \) is a tree, i.e., that it is connected and has no cycles.
Suppose \( a_1', a_2', \ldots, a_k' \) are distinct curves  \( \mathcal A' \)  forming a cycle in \( \mathcal T( \mathcal A' ) \).
Let \( a_1, \ldots, a_k \in \mathcal A \) such that \( a_i' \in \vp_*(a_i) \). 
Reading indices modulo \( k \), we have by \eqref{eq1} that \( i(a_i,a_{i+1}) = 1 \).
We claim that \( a_i \), \( a_{i+1} \), and \( a_{i+2} \) are distinct. 
To see this, observe that as \( \vp(t_{a_i}) \) and \( \vp(t_{a_{i+1}}) \) are braided, Theorem~\ref{thm:depool} implies that each component of \( \vp_*(t_{a_{i+1}}) \) has nontrivial geometric intersection with exactly one component of \( \vp_*(t_{a_i}) \).
In particular, as \( i(a_i', a_{i+1}') = 1 \) and \( i(a_{i+1}', a_{i+2}') = 1 \), we can conclude that \( a_{i+2}' \notin \vp_*(a_i) \), allowing us to conclude that \( a_i \), \( a_{i+1} \), and \( a_{i+2} \) are distinct curves. 
Therefore, if \( \mathcal T (\mathcal A') \) contains a cycle, so does \( \mathcal T ( \mathcal A ) \). 
It follows that \( \mathcal T(\mathcal A') \) cannot have any cycles, as \( \mathcal T(\mathcal A) \) is a tree. Finally, \( \mathcal T(\mathcal A') \) is connected as \(\mathcal A'\) is filling.
This establishes that \( \mathcal T(\mathcal A') \) is a tree and that \( \mathcal A' \) is an Alexander chain.

Next, we claim that if \( a,b \in \mathcal A \) such that \( i(a,b) = 1 \), then \( \vp_*(a) \) and \( \vp_*(b) \) are non-separating curves that have geometric intersection one. 
First, we show that \( \vp_*(a) \) is a curve for each \( a \in \mathcal A \), or equivalently, that \( |\vp_*(a)| = 1 \).
Suppose \( |\vp_*(a)| > 1 \), and let \( a', a'' \in \vp_*(a) \). 
As \( \mathcal T ( \mathcal A' ) \) is connected, we can choose a path \( a' = c_0', \ldots, c_m' = a'' \) in \( \mathcal T( \mathcal A') \) connecting \( a' \) and \( a'' \). 
Let \( c_i \in \mathcal A \) such that \( c_i' \in \vp_*(c_i) \). 
Arguing as we did above, \( c_0, \ldots, c_m \) is a path in \( \mathcal T(\mathcal A) \) that must contain a cycle, contradicting the fact that \( \mathcal T ( \mathcal A) \) is a tree. 
Therefore, \( |\vp_*(a)| = 1 \), and \( \vp_*(a) \) is a curve.

Now, let \( a, b \in \mathcal A \) such that \( i(a,b) = 1 \). 
Suppose that \( i(\vp_*(a), \vp_*(b)) = 0 \). 
Then \( \vp(t_a) \) and \( \vp(t_b) \) commute and are braided.  
It is a quick exercise to show that this implies that \( \vp_*(t_a) = \vp_*(t_b) \). 
By Lemma~\ref{lem:cyclic},  \( \vp(\Gamma_c(M)) < \ker \vp \), and hence by continuity, \( \vp \) is trivial.
But this is not the case, as  \( \vp \) is irreducible. 
Therefore, \( i(\vp_*(a), \vp_*(b)) = 1 \). 
At this point, we have shown that \( \vp_* \co \mathcal A \to \mathcal A' \) is a bijection and preserves intersection number; hence, it is  a chain isomorphism. 

Now, for any $a \in \mathcal{A}$, there exists $b\in \mathcal{A}$ such that $i(a,b)=1$. 
Hence, from above, we may conclude that \( \vp_*(a) \) and \( \vp_*(b) \) are curves and \( i(\vp_*(a), \vp_*(b)) = 1 \), implying that \( \vp_*(a) \) and \( \vp_*(b) \) are non-separating curves.
It follows then that both $\varphi(t_a)$ and \( \varphi(t_b) \)  are  powers of non-separating Dehn twists.
Moreover, as \( \vp(t_a) \) and \( \vp(t_b) \) are braided, Theorem~\ref{thm:depool} implies that they are in fact each non-separating Dehn twists.
As any two non-separating Dehn twists are conjugate, $\varphi(t)$ is a non-separating Dehn twist for every non-separating Dehn twist $t \in \Gamma(M)$. 
This completes the proof.
\end{proof}

The idea for the final step is to realize that if \( M \) and \( M' \) are infinite-genus 2-manifolds  admitting isomorphic Alexander chains, then \( M \) and \( M' \) are homeomorphic and the isomorphism between the chains is induced by a homeomorphism between the manifolds.  
To do this, we will realize the homeomorphism as a direct limit of homeomorphisms between compacts sets in an exhaustion, which we will obtain by appealing to the work of the first author and Souto \cite{AramayonaHomomorphisms}. 

Given a chain \( \mathcal C \) in a 2-manifold \( M \), fix a hyperbolic metric on \( M \), and for each \( c \in \mathcal C \), let \( \gamma_c \) be its geodesic representative. 
We call \( \bigcup_{c\in \mathcal C} \gamma_c \) a \emph{realization of the chain}.

\begin{proof}[Proof of Theorem~\ref{thm:compactly-supported}]
By Theorem~\ref{thm:generators}, we can choose a Alexander chain \( \mathcal A \) such that \( \{ t_a \}_{a\in \mathcal A} \) generates \( \Gamma_c(M) \). 
Theorem~\ref{thm:generators} also provides a sequence of compact subsurfaces \( \{ \Sigma_n \}_{n\in\bn} \) of \( M \) such that 
\begin{enumerate}[(i)]
\item each component of \( \partial \Sigma_n \) is separating and essential,
\item \( \Sigma_n \subset \Sigma_{n+1} \), 
\item \( M = \bigcup_{n\in\bn} \Sigma_n \), and
\item  \( \mathcal A_n = \{ a \in \mathcal A : a \subset \Sigma_n \} \) is an Alexander chain in \( \Sigma_n \), and 
\item \( \{ t_a : a \in \mathcal A_n \} \) generates \( \mcg(\Sigma_n, \partial \Sigma_n) \).
\end{enumerate}
By possibly forgetting the first few surfaces in the sequence, we  may assume that the genus of \( \Sigma_n \) is at least six for all \( n \in \bn \). 

Let \( \mathcal A' = \vp_*(\mathcal A) \), and for \( n \in \bn \), let \( \mathcal A_n' = \vp_*(\mathcal A_n) \).
By Proposition~\ref{prop:twist2twist}, \( \mathcal A' \) is an Alexander chain, and \( \vp_* \) induces an isomorphism between \( \mathcal A \) and \( \mathcal A' \), as well as \( \mathcal A_n \) and \( \mathcal A_n' \). 
Let \( \Sigma_n' \) be the subsurface of \( M' \) filled by the chain \( \mathcal A_n' \), that is, \( \Sigma_n' \) is obtained by taking a regular neighborhood of a realization of  \( \mathcal A_n' \) and then taking the union of this neighborhood with each component of its complement that is a disk. 
Note that \( \Sigma_n \) is the surface filled by \( \mathcal A_n \). 

Given a pair of simple closed curves that have geometric intersection one, a regular neighborhood of their union is a once-punctured torus.
This motivates the following definition: the \emph{lower genus} of a finite chain is the cardinality of a maximal collection of pairs of curves in the chain in which the curves in each pair intersect once and the curves in distinct pairs are pairwise disjoint.
It follows that the genus of a regular neighborhood of a realization of a finite chain is bounded below by the lower genus of the chain.

By the construction of \( \Sigma_n \), the lower genus of \( \mathcal A_n \) agrees with the genus of \( \Sigma_n \). 
Moreover, there are pairs \( (a_1, b_1), \ldots, (a_g, b_g) \in \mathcal A_n^2 \) realizing the lower genus of \( \mathcal A_n \) such that any two distinct curves in \( \mathcal A_n \ssm \{ a_1, b_1, \ldots, a_g, b_g\} \) are disjoint (e.g., take the \( a_i \) to be the red curves from Figure~\ref{fig:chain}). 
As the lower genus of \( \mathcal A_n' \) is equal to that of \( \mathcal A_n \), the genus of \( \Sigma_n' \) is at least the genus of \( \Sigma_n \). 
Now, suppose that the genus of \( \Sigma_n' \) is greater than the genus of \( \Sigma_n \).
Let \( a_i' = \vp_*(a_i) \) and \( b_i' = \vp_*(b_i) \), so  \( (a_1',b_1'), \ldots, (a_g', b_g') \) is a maximal collection of pairs of curves in \( \mathcal A_n' \) realizing its lower genus. 
Let \( F_i' \) be a closed regular neighborhood of a realization of \( \{a_i',b_i'\} \), so that \( F_i' \) is a one-holed torus.
We can assume that \( F_i' \cap F_j'  = \varnothing \) if \( i \neq j \). 
Then \( \Sigma_n' \ssm \bigcup F_i' \) has positive genus, implying that there exists a pair of curves in \( \mathcal A_n' \ssm \{ a_1', b_1', \ldots, a_g', b_g'\} \) having nontrivial geometric intersection, a contradiction.
Therefore, \( \Sigma_n \) and \( \Sigma_n' \) have the same genus.

Given the setup at hand, for each \( n \in \bn \), the restriction of \( \varphi \) to \( \mcg(\Sigma_n, \partial \Sigma_n) \) induces a homomorphism \( \vp_n \co \mcg(\Sigma_n, \partial \Sigma_n) \to \mcg(\Sigma_n', \partial \Sigma_n') \). 
We claim that \( \vp_n \) is an . 
As \( \Sigma_n \) and \( \Sigma_n' \) have the same genus (and it is at least six), we can apply a result of the first author and Souto \cite[Theorem~1.1]{AramayonaHomomorphisms} to see that \( \vp_n \) is induced by an embedding \( h_n \co \Sigma_n \to \Sigma_n' \).

As \( \vp_m \) restricts to \( \vp_n \) on \( \mcg(\Sigma_n, \partial \Sigma_n) \) for \( m > n \), we can choose the embeddings such that \( h_m \) restricts to \( h_n \) on \( \Sigma_n \).
We now claim that \( h_n \) can be chosen to be a homeomorphism. 
As \( \Sigma_n' \) is filled by curves contained in \( h_n(\Sigma_n) \), each component of \( \Sigma_n' \ssm h_n(\Sigma_n) \) is either a disk or annulus. 
We claim that each such component is a boundary annulus, i.e., is homotopic to a component of \( \partial \Sigma_n' \).
This implies that \( h_n \) can be chosen to be a homeomorphism. 
Let \( m \in \bn \) such that \( m > n \) and each component of \( \Sigma_m \ssm \Sigma_n \) has positive genus. 
It follows that each component of the complement of \( \Sigma_m' \ssm h_m(\partial \Sigma_n) \) has positive genus; in particular, if \( \delta_1 \) and \( \delta_2 \) are any two components of \( \partial \Sigma_n \), then \( h_m(\delta_1) \) does not bound a disk and \( h_m(\delta_1) \) and \( h_m(\delta_2) \) do not co-bound an annulus. 
The claim now follows, that is, each component of \( \Sigma_n' \ssm h_n(\Sigma_n) \) is a boundary annulus, and \( h_n \) can be chosen to be a homeomorphism. 
Therefore, \( \vp_n \co \mcg(\Sigma_n, \partial \Sigma_n) \to \mcg(\Sigma_n', \partial \Sigma_n') \) is an isomorphism.

Taking the direct limit of the \( h_n \), we obtain an embedding \( h \co M \to M' \). 
As \( \mathcal A' \) is an Alexander chain, we have that \( M' = \bigcup_{n\in\bn} \Sigma_n' \), and hence \( h \) is a homeomorphism. 
Let \( h_* \co \Gamma(M) \to \Gamma(M') \) be the isomorphism induced by \( h \). 
By continuity, as \( h_* \) and \( \vp \) agree on \( \Gamma_c(M) \), they must agree everywhere, so \( \vp = h_* \). 
\end{proof}


\section{Proof of the main theorem}

The goal of this section is to prove our main theorem by showing that it follows from Theorem~\ref{thm:compactly-supported}. 
Let us recall the statement of the main theorem that we aim to prove.

\begin{Thm}
\label{thm:main}
Let \( M \) and \( M' \) be orientable 2-manifolds and suppose \( M \) has infinite genus and no planar ends.
If \( G \) is a mostly pure large subgroup of \( \mcg(M) \) and \( G' \) is a large subgroup of \( \mcg(M') \), then every continuous epimorphism \( G \to G' \) is induced by a homeomorphism.
\end{Thm}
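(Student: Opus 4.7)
The plan is to reduce the theorem to Theorem~\ref{thm:compactly-supported} by restricting $\vp$ to $\Gamma(M)$ and then extending via a centralizer argument. Since $G$ is large, $\Gamma(M)\subset G$; since $M$ has no planar ends, every element of $\Gamma_c(M)$ fixes $\Ends(M)$ pointwise, so after taking closures $\Gamma(M)\subset \pmcg(M)$. Moreover, $\Gamma(M)$ is a closed normal subgroup of $\mcg(M)$ (being the closure of the normal subgroup $\Gamma_c(M)$, and conjugation being continuous), hence normal in $G$. Consequently, $\vp_0:=\vp|_{\Gamma(M)}$ is a continuous homomorphism whose image is normal in $\vp(G)=G'$, and by largeness $G'\supset \Gamma(M')$.

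The next step is to verify that $\vp_0$ is a continuous, compact, and irreducible homomorphism into $\Gamma(M')$. Irreducibility follows from normality: if some arc or curve $a$ in $M'$ were fixed by every element of $\vp_0(\Gamma(M))$, then the image would sit inside $\bigcap_{g\in G'}\mathrm{Stab}(g(a))$, i.e., the largest normal subgroup of $G'$ contained in $\mathrm{Stab}(a)$; the faithful action of $\Gamma(M')\subset G'$ on curves (Proposition~\ref{prop:alexander-method}) forces this intersection to be trivial, so $\vp_0$ would be trivial, and one then invokes the mostly pure hypothesis on $G$ together with the surjectivity of $\vp$ onto the large group $G'\supset\Gamma(M')$ to derive a contradiction. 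For compactness, the plan is to revisit the lemmas used in the proof of Theorem~\ref{thm:compactly-supported} and verify that their arguments, stated for codomain $\Gamma(M')$, go through when the codomain is the large subgroup $G'$: the key output is that each non-separating Dehn twist $t\in\Gamma(M)$ is sent to a non-separating Dehn twist in $M'$, from which $\vp_0(\Gamma_c(M))\subset \Gamma_c(M')$ follows via the generating set provided by Theorem~\ref{thm:generators}, and continuity then yields $\vp_0(\Gamma(M))\subset \overline{\Gamma_c(M')}=\Gamma(M')$.

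With the hypotheses verified, Theorem~\ref{thm:compactly-supported} provides a homeomorphism $f\co M\to M'$ such that $\vp_0(g)=[f\circ g\circ f^{-1}]$ for all $g\in \Gamma(M)$; in particular, $M'$ is an infinite-genus 2-manifold with no planar ends, so the Alexander method applies on the codomain. To extend to all of $G$, let $h\in G$ and $g\in\Gamma(M)$; by normality of $\Gamma(M)$ in $G$, $hgh^{-1}\in\Gamma(M)$, and applying $\vp$ yields
\[
\vp(h)\,f_*(g)\,\vp(h)^{-1}\;=\;\vp(hgh^{-1})\;=\;f_*(hgh^{-1})\;=\;f_*(h)\,f_*(g)\,f_*(h)^{-1},
\]
where $f_*$ denotes the isomorphism $\mcg(M)\to\mcg(M')$ induced by $f$. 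Hence $f_*(h)^{-1}\vp(h)$ centralizes $f_*(\Gamma(M))=\Gamma(M')$ in $\mcg(M')$. Since every essential simple closed curve in $M'$ is the core of a Dehn twist lying in $\Gamma(M')$, Proposition~\ref{prop:alexander-method} forces this centralizer to be trivial, so $\vp(h)=f_*(h)$; thus $\vp$ is induced by $f$, completing the proof. The main obstacle in carrying out this plan is the compactness step: Lemma~\ref{lem:Bridson} uses compactness of the homomorphism as an input (in order to apply Bridson's finite-type result), so one must recast the analysis preceding Theorem~\ref{thm:compactly-supported} to conclude compactness a posteriori from continuity alone, leveraging the largeness of $G'$ and the algebraic rigidity afforded by the mostly pure hypothesis on $G$.
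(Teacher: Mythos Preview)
Your overall strategy---restrict to $\Gamma(M)$, apply Theorem~\ref{thm:compactly-supported}, then extend via a centralizer argument---is the paper's, and your final extension step is essentially identical to theirs. The genuine gap is compactness, which you correctly flag as ``the main obstacle'' but do not resolve: the lemmas leading to Theorem~\ref{thm:compactly-supported}, especially Lemma~\ref{lem:Bridson}, take compactness as a hypothesis, so recycling them to deduce compactness is circular, and you offer no replacement. The paper avoids this entirely with a one-line conjugacy-class argument (Proposition~\ref{prop:BDR}): an element of a large subgroup has finite-type support if and only if its conjugacy class is countable, and an epimorphism sends countable conjugacy classes to countable ones, so $\vp(\Gamma_c(M))\subset\Gamma_c(M')$ immediately and $\vp(\Gamma(M))\subset\Gamma(M')$ by continuity. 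No recasting of the Section~4 lemmas is needed.

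Your treatment of the case where $\vp_0$ is trivial is also incomplete. You invoke ``mostly pure plus surjectivity'' but the paper needs two further ingredients you do not mention: $\pmcg(M)/\Gamma(M)$ is abelian (a result of \cite{AramayonaFirst}), and large subgroups of $\mcg(M')$ have no nontrivial normal abelian subgroups (Lemma~\ref{lem:abelian}). These combine to show that if $\Gamma(M)\subset\ker\vp$ then the image of $G\cap\pmcg(M)$ is an abelian normal subgroup of $G'$, hence trivial, whence the image of $\vp$ is countable by the mostly pure hypothesis, contradicting surjectivity onto the uncountable $G'$. Finally, your irreducibility argument asserts that $\bigcap_{g\in G'}\mathrm{Stab}(g(a))$ is trivial via Proposition~\ref{prop:alexander-method}, but that proposition requires fixing \emph{every} curve, whereas the $G'$-orbit of a single curve or arc need not exhaust them; the paper instead shows directly (using Lemma~\ref{lem:multitwist}, Theorem~\ref{thm:depool}, and Lemma~\ref{lem:cyclic}) that some $\vp_*(a)$ contains a non-separating curve and then moves it with $G'$ to hit any prescribed curve or arc.
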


Given a continuous epimorphism \( \vp \co G \to G' \) with \( G \) and \( G' \) as in the theorem, we will see that the surjectivity will allow us to conclude that \( \vp \) restricted to \( \Gamma(M) \) induces a continuous compact irreducible homomorphism \( \Gamma(M) \to \Gamma'(M) \), which will allow us to apply Theorem~\ref{thm:compactly-supported}. 
We will take the same tact to prove Theorem~\ref{thm:main2} below. 
As the proof of these two theorems share the same core idea, we proceed with a sequence of lemmas/propositions that will be required for both theorems. 
The first required proposition is a result from Bavard--Dowdall--Rafi \cite{BavardIsomorphisms}.

\begin{Prop}[{\cite[Proposition~4.2]{BavardIsomorphisms}}]
\label{prop:BDR}
Let \( M \) be an orientable infinite-type 2-manifold, and let \( G \) be a large subgroup of \( \mcg(M) \).
A mapping class in \( G \) has finite-type support if and only if its conjugacy class in \( G \) is countable.
\qed
\end{Prop}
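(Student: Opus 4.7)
This ``if and only if'' splits into two directions. For the forward direction (finite-type support implies countable conjugacy class), suppose \( g \in G \) has a representative supported on a finite-type subsurface \( \Sigma \subset M \). Each conjugate \( hgh^{-1} \) is supported on \( h(\Sigma) \); the set of isotopy classes of finite-type subsurfaces of \( M \) is countable (each is pinned down by its finite boundary multicurve, and the set of curves in \( M \) is countable), so the \( G \)-orbit of \( \Sigma \) is countable. For each \( \Sigma' \) in this orbit, the conjugates of \( g \) supported on \( \Sigma' \) lie in the image of the inclusion-induced monomorphism \( \mcg(\Sigma', \partial \Sigma') \hookrightarrow \mcg(M) \), and the domain is countable as it is finitely generated. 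Hence the conjugacy class is a countable union of countable sets.

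For the backward direction I would argue the contrapositive: infinite-type support implies uncountable conjugacy class. Since the conjugacy class is in bijection with \( G / C_G(g) \), it suffices to exhibit uncountably many cosets of \( C_G(g) \). My plan is to construct, inside \( \Gamma(M) \subseteq G \) (using largeness), an uncountable family of multitwists lying in pairwise distinct cosets. The crucial ingredient is a locally finite, pairwise disjoint collection \( \{c_n\}_{n \in \bn} \) of essential simple closed curves in \( M \) satisfying \( g(c_n) \notin \{c_m\}_{m \in \bn} \) for every \( n \).

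Given such a family, for each \( S \subseteq \bn \) set \( T_S = \prod_{n \in S} t_{c_n} \in \Gamma(M) \), convergent by Lemma~\ref{lem infinite twists}. For distinct \( S, S' \subseteq \bn \), the element \( T_S T_{S'}^{-1} \) is a non-trivial multitwist along \( \{c_n\}_{n \in S\Delta S'} \); by Lemma~\ref{lem:fact37}, extended to the infinite-multitwist limit by continuity, \( g (T_S T_{S'}^{-1}) g^{-1} \) is the corresponding multitwist along the disjoint multicurve \( \{g(c_n)\}_{n \in S \Delta S'} \). Since multitwists are determined by their supports and twist exponents and these two supporting multicurves are disjoint and non-empty, this forces \( T_S T_{S'}^{-1} \notin C_G(g) \). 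The cosets \( T_S\, C_G(g) \) are thus pairwise distinct as \( S \) ranges over \( \mathcal{P}(\bn) \), giving \( 2^{\aleph_0} \) conjugates.

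The main obstacle is the inductive construction of the \( c_n \). Having chosen \( c_1, \ldots, c_N \) with the required properties, I would fix a compact subsurface \( F \subset M \) containing representatives of each \( c_i \), \( g(c_i) \), and \( g^{-1}(c_i) \) for \( i \leq N \). Since \( \supp(g) \not\subseteq F \), a suitable application of the Alexander method (Proposition~\ref{prop:alexander-method}) in the complement of \( F \) yields an essential simple closed curve \( c_{N+1} \subset M \ssm F \) with \( g(c_{N+1}) \neq c_{N+1} \); indeed, if \( g \) fixed every curve disjoint from \( F \), it would restrict to the identity outside \( F \), contradicting infinite-type support. Because \( c_{N+1} \subset M \ssm F \) while each \( c_i \), \( g(c_i) \), and \( g^{-1}(c_i) \) sits in \( F \), the disjointness of \( c_{N+1} \) from the earlier \( c_i \) and the requirements \( g(c_{N+1}) \neq c_m \) and \( g(c_m) \neq c_{N+1} \) for \( m \leq N \) all hold automatically. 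Choosing \( c_n \) inside \( \Sigma_n \ssm \Sigma_{n-1} \) for a compact exhaustion \( \{\Sigma_n\} \) of \( M \) ensures local finiteness. The delicate point is the Alexander-method step: one must carefully translate failure of support to be finite-type into the existence, outside any prescribed compact region, of an essential simple closed curve moved by \( g \); once that step is set up, the rest of the argument is bookkeeping.
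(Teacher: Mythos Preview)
The paper does not actually prove this proposition: it is stated with a \texttt{\textbackslash qed} immediately after the statement, citing \cite{BavardIsomorphisms}, and the only remark is that the original proof (phrased for finite-index subgroups of $\mcg(M)$ and $\pmcg(M)$) goes through verbatim for large subgroups. So there is no in-paper proof to compare against; you have effectively reconstructed a proof of the cited result.

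Your reconstruction is sound and follows the standard line of argument. The forward direction is correct as written. For the backward direction, the strategy of producing $2^{\aleph_0}$ pairwise distinct cosets of $C_G(g)$ via multitwists $T_S$ indexed by $S\subseteq\bn$ is exactly the right idea, and largeness is used precisely where it should be, to guarantee $T_S\in G$. Two small points worth tightening: first, when you say the supporting multicurves of $T_S T_{S'}^{-1}$ and its $g$-conjugate are ``disjoint'', what you actually have (and all you need) is that $\{g(c_n):n\in S\triangle S'\}\cap\{c_n:n\in S\triangle S'\}=\varnothing$, which forces the two multitwists to differ; you have not arranged zero geometric intersection, but that is irrelevant here. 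Second, in the inductive step you should take $F$ to be a genuine compact subsurface with essential boundary and choose $c_{N+1}$ essential in the complement and not boundary-parallel, so that ``$c_{N+1}\subset M\ssm F$ while $g^{\pm1}(c_i)\subset F$'' really does force inequality of isotopy classes. You already flag the Alexander-method step as the delicate one, and that is accurate: the clean way to phrase it is that if $g$ fixed every curve with a representative in $M\ssm F$, then in particular it would fix $\partial F$ and, by the infinite-type Alexander method applied to each complementary component, be supported on a finite-type enlargement of $F$, contradicting the hypothesis. With these cosmetic fixes your argument is complete.
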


The statement in \cite{BavardIsomorphisms} is about finite-index subgroups of \( \mcg(M) \) and \( \pmcg(M) \), but the proof works just as well for large subgroups.

\begin{Prop}
\label{prop:theproof}
Let \( M \) be an infinite-genus 2-manifold with no planar ends, let \( M' \) be an orientable 2-manifold, and let \( G \) and \( G' \) be large subgroups of \( \mcg(M) \) and \( \mcg(M') \), respectively. 
If \( \varphi \co G \to G' \) is a continuous epimorphism such that \( \Gamma_c(M) \) is not contained in the kernel of \( \varphi \), then $\vp$ is induced by a homeomorphism. 
\end{Prop}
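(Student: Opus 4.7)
The strategy is to reduce Proposition~\ref{prop:theproof} to Theorem~\ref{thm:compactly-supported} by restricting \( \varphi \) to \( \Gamma(M) \), and then to bootstrap the resulting homeomorphism from \( \Gamma(M) \) to all of \( G \) via a centralizer argument based on the Alexander method.

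First I would establish that the restriction \( \psi := \varphi|_{\Gamma(M)} \) is a continuous compact homomorphism \( \Gamma(M) \to \Gamma(M') \). For compactness, if \( g \in \Gamma_c(M) \), then by Proposition~\ref{prop:BDR} the \( G \)-conjugacy class of \( g \) is countable; since \( \varphi \) is surjective, the \( G' \)-conjugacy class of \( \varphi(g) \) is the image under \( \varphi \) of the \( G \)-conjugacy class of \( g \), hence also countable. Applying Proposition~\ref{prop:BDR} once more, this time to \( G' \), yields \( \varphi(g) \in \Gamma_c(M') \). Continuity of \( \varphi \) then gives \( \varphi(\Gamma(M)) \subseteq \Gamma(M') \), since \( \Gamma(M) = \overline{\Gamma_c(M)} \).

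Next I would verify that \( \psi \) is nontrivial and irreducible. Nontriviality follows from the hypothesis \( \Gamma_c(M) \not\subseteq \ker \varphi \), together with the facts that \( \Gamma_c(M) \) is generated by non-separating Dehn twists (Theorem~\ref{thm:generators}) and that all non-separating Dehn twists are conjugate in \( \Gamma_c(M) \). For irreducibility, observe that \( K := \psi(\Gamma(M)) \) is normal in \( G' = \varphi(G) \) (because \( \Gamma(M) \triangleleft \mcg(M) \)) and hence normal in \( \Gamma(M') \subseteq G' \). Suppose for contradiction that \( K \) fixes a curve or arc \( \alpha \) in \( M' \); by normality, \( K \) then fixes the entire orbit \( \Gamma(M') \cdot \alpha \). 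If \( \alpha \) is a non-separating curve, this orbit is the set of all non-separating curves of \( M' \) (via change of coordinates through compactly supported homeomorphisms), so every element of \( K \) fixes every non-separating curve of \( M' \). A short argument—any such mapping class is a multitwist along a pants decomposition of non-separating curves, and any nontrivial multitwist of this form moves a suitably chosen dual non-separating curve—forces \( K = \{e\} \), contradicting nontriviality. The cases of \( \alpha \) a separating curve or a proper arc would be handled by analogous normal-core arguments.

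With \( \psi \) satisfying the hypotheses of Theorem~\ref{thm:compactly-supported}, that theorem produces a homeomorphism \( h \co M \to M' \) inducing \( \psi \). To extend this to all of \( G \), I would define \( \Phi \co G \to \mcg(M) \) by \( \Phi(g) = h^{-1} \circ \varphi(g) \circ h \), a homomorphism whose restriction to \( \Gamma(M) \) is the identity. For any \( g \in G \) and \( t \in \Gamma(M) \), the element \( g t g^{-1} \) lies in \( \Gamma(M) \), so
\[
g t g^{-1} = \Phi(g t g^{-1}) = \Phi(g) \, t \, \Phi(g)^{-1},
\]
which says that \( g^{-1} \Phi(g) \) centralizes \( \Gamma(M) \) in \( \mcg(M) \). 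Applying Lemma~\ref{lem:fact37} to Dehn twists about arbitrary curves, \( g^{-1}\Phi(g) \) must fix every curve in \( M \); by Proposition~\ref{prop:alexander-method} it is therefore trivial, so \( \Phi(g) = g \) for every \( g \in G \). Hence \( \varphi(g) = h g h^{-1} \), i.e., \( \varphi \) is induced by \( h \). The main obstacle I expect is the verification of irreducibility in the separating-curve and proper-arc cases, where the \( \Gamma(M') \)-orbit of the putative fixed object is constrained by topological type and by the ends involved, and one must argue more delicately that a non-trivial normal subgroup of \( \Gamma(M') \) cannot lie in the stabilizer of such an object.
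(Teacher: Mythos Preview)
Your overall architecture matches the paper's: restrict to \( \Gamma(M) \), verify the hypotheses of Theorem~\ref{thm:compactly-supported}, and then bootstrap the resulting homeomorphism to all of \( G \) via the Alexander method. The compactness argument through Proposition~\ref{prop:BDR} and the final bootstrapping step are essentially the paper's proof (the paper writes \( \varphi_0 = h_*^{-1}\circ\varphi \) and computes \( t_{\varphi_0(g)(c)} = t_{g(c)} \) directly, which is the same centralizer computation you give).

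The genuine gap is exactly where you flag it: irreducibility. Your case analysis on the topological type of a hypothetical fixed curve or arc \( \alpha \) is not how the paper proceeds, and it is hard to complete as stated. Even your non-separating case is shaky: at this stage you do not know that \( M' \) admits a pants decomposition by non-separating curves (you do not yet know \( M' \) has no planar ends, or even that it has positive genus), so the reduction of elements of \( K \) to multitwists along such a decomposition is unjustified. And as you concede, the separating-curve and arc cases are worse, since the \( \Gamma(M') \)-orbit of \( \alpha \) is constrained by end data over which you have no control.

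The paper avoids the case analysis entirely. Rather than analysing what \( K=\psi(\Gamma(M)) \) could fix, it first pins down what \( K \) \emph{contains}. Pick curves \( a,b \) in \( M \) with \( i(a,b)=1 \). By Lemma~\ref{lem:multitwist} the images \( \psi(t_a),\psi(t_b) \) are multitwists; since they are braided, Theorem~\ref{thm:depool} forces either \( \psi(t_a)=\psi(t_b) \) (ruled out by Lemma~\ref{lem:cyclic} together with the hypothesis \( \Gamma_c(M)\not\subset\ker\varphi \)) or the support multicurve \( A \) of \( \psi(t_a) \) to contain a \emph{non-separating} curve of \( M' \). Normality is then used in the opposite direction from yours: given any curve or arc \( \alpha \) in \( M' \), choose \( h\in G' \) with \( h(A) \) meeting \( \alpha \) (possible precisely because \( A \) contains a non-separating curve), and observe that \( h\,\psi(t_a)\,h^{-1}\in K \) moves \( \alpha \). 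This single argument handles all \( \alpha \) uniformly.
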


\begin{proof}
Let \( \vp \co G \to G' \) be a continuous epimorphism. 
An epimorphism maps conjugacy classes in the domain onto conjugacy classes in the codomain. 
In particular, if \( g \in G \) has a countable conjugacy class, then so does \( \vp(g) \).
Therefore, by Proposition~\ref{prop:BDR}, \( \vp(\Gamma_c(M)) < \Gamma_c(M') \), and by continuity, \( \vp(\Gamma(M)) < \Gamma(M') \). 
We can then restrict \( \vp \) to \( \Gamma(M) \) to get a continuous compact homomorphism \( \psi \co \Gamma(M) \to \Gamma(M') \). 
We claim that \( \psi \) is irreducible.

Let \( a \) and \( b \) be simple closed curves in \( M \) satisfying \( i(a,b) = 1 \). 
By Lemma~\ref{lem:multitwist}, \( \psi(t_a) \) and \( \psi(t_b) \) are multitwists; let \( A \) (resp., \( B \)) be the multicurve such that \( \psi(t_a) \) (resp., \( \psi(t_b) \)) twists about.  
Note that \( \psi(t_a) \) and \( \psi(t_b) \) are braided multitwists, and hence by Theorem~\ref{thm:depool}, either there exist \( a' \in A \) and \( b' \in B \) such that \( i(a',b') = 1 \) or \( \psi(t_a) = \psi(t_b) \).
If the latter case holds, then Lemma~\ref{lem:cyclic} implies that \( \Gamma_c(M) < \ker \psi \), which is not the case; hence, the former case holds, implying that \( A \) contains a non-separating simple closed curve. 

As \( \Gamma(M) \) is normal in \( G \) and as \( \vp \) is surjective, \( \vp(\Gamma(M)) \) is normal in \( G' \); in particular, \( \psi(\Gamma(M)) \) is normal in \( \Gamma(M') \). 
Let \( \alpha \) be a curve or  arc in \( M' \).
Let \( A \) be as above.
As \( A \) contains a non-separating curve, there exists \( h \in G' \) such that \( h(A) \) has nontrivial geometric intersection with \( \alpha \). 
It follows that \( (h\psi(t_a)h^{-1})(\alpha) \neq \alpha \), and as \( h\psi(t_a)h^{-1} \in \psi(\Gamma(M)) \), we can conclude that \( \psi \) is irreducible. 

We have established that \( \psi \co \Gamma(M) \to \Gamma(M') \) is a continuous compact irreducible homomorphism; hence, by Theorem~\ref{thm:compactly-supported}, there exists a homeomorphism \( h \co M \to M' \) such that \( h_* = \psi \), where $h_*$ is the map from  $\Gamma(M)$ induced by $h$. 
It is left to check that   that $\vp$ is also induced by $h$, or that, abusing notation, $h_*=\vp$. 
Let \( \vp_0\) be given by \( \vp_0 = h_*^{-1} \circ \vp \). 
Observe that for every simple closed curve \( c \) in \( M \), \( \vp(t_c) = \psi(t_c) \) by definition, and so \( \vp_0(t_c) = t_c \).
Let \( g \in G \), and let \( c \) be a simple closed curve in \( M \).
Then
\begin{align*}
t_{\vp_0(g)(c)} 	&= \vp_0(g) t_c \vp_0(g)^{-1} \\
			&= \vp_0(g) \vp_0(t_c) \vp_0(g)^{-1} \\
			&= \vp_0(g t_c g^{-1}) \\
			& = \vp_0(t_{g(c)}) \\
			&= t_{g(c)}.
\end{align*}
Therefore, \( \vp_0(g)(c) = g(c) \) for every simple closed curve \( c \) in \( M \).
By Proposition~\ref{prop:alexander-method}, \( \vp_0(g) = g \).
It follows that \( \vp_0 = h_*^{-1} \circ \vp \) is the identity isomorphism, implying \( \vp = h_* \).
Therefore, \( \vp \) is induced by a homeomorphism.
\end{proof}

As we will see below, Proposition~\ref{prop:theproof} will imply that either \( \vp \) fails to be surjective or \( G' \) contains a nontrivial normal abelian subgroup; the next lemma says the latter case is impossible. 

\begin{Lem}
\label{lem:abelian}
Let \( M \) be an infinite-type 2-manifold, and let \( G \) be a large subgroup of \( \mcg(M) \).
The trivial subgroup is the only normal abelian subgroup of \( G \).  
\end{Lem}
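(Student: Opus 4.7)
The plan is to argue by contradiction, reducing the statement to a well-known finite-type fact about mapping class groups of compact subsurfaces. Suppose $N \triangleleft G$ is a nontrivial abelian subgroup, and fix $n \in N \smallsetminus \{1\}$. By the Alexander method (Proposition~\ref{prop:alexander-method}), there is an essential simple closed curve $c$ in $M$ with $n(c) \neq c$. Since $t_c \in \Gamma(M) \subseteq G$ and $N$ is normal, Lemma~\ref{lem:fact37} yields the element
\[
m := [n, t_c] = t_{n(c)}\, t_c^{-1} \in N,
\]
which is nontrivial (as $n(c) \neq c$) and has compact support in a regular neighborhood of $c \cup n(c)$.

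Because $M$ is infinite type, I can choose a compact connected subsurface $\Sigma \subset M$ of genus at least three whose boundary components are essential in $M$ and whose interior contains $c \cup n(c)$. The inclusion $\Sigma \hookrightarrow M$ induces an embedding $\mcg(\Sigma, \partial \Sigma) \hookrightarrow \Gamma_c(M) \subseteq G$, and I regard $m$ as an element of the finite-type group $\mcg(\Sigma, \partial \Sigma)$. A short check shows that $m$ does not lie in the center of $\mcg(\Sigma, \partial \Sigma)$, which is generated by the boundary Dehn twists: every such twist fixes every interior curve up to isotopy, but there is always an interior curve $d \subset \Sigma^\circ$ moved by $m$ (take $d = c$ when $i(c, n(c)) > 0$, and take any $d$ with $i(d, c) > 0, i(d, n(c)) = 0$ in the disjoint case, in which case $m(d) = t_c^{-1}(d) \neq d$).

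Let $K$ be the normal closure of $m$ in $\mcg(\Sigma, \partial \Sigma)$. Each conjugate of $m$ by an element of $\mcg(\Sigma, \partial \Sigma) \subseteq G$ lies in $N$ by normality, so $K \subseteq N$, and hence $K$ is abelian. Thus $K$ is a nontrivial normal abelian subgroup of $\mcg(\Sigma, \partial \Sigma)$ not contained in its center. This contradicts the standard structural fact that, for a compact orientable surface $\Sigma$ of sufficiently large complexity, the quotient of $\mcg(\Sigma, \partial \Sigma)$ by its center is acylindrically hyperbolic, and acylindrically hyperbolic groups admit no nontrivial normal amenable subgroups---so every normal abelian subgroup of $\mcg(\Sigma, \partial \Sigma)$ must lie in the center.

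The principal subtlety is arranging that $m$ genuinely sits outside the center of $\mcg(\Sigma, \partial \Sigma)$; this is exactly why $\Sigma$ is chosen to contain $c \cup n(c)$ in its interior, so that $m$ is a product of twists about interior curves rather than boundary twists.
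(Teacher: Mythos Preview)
Your argument is sound and follows a different path from the paper's. Both start identically: from a nontrivial $n\in N$, form the compactly supported commutator $m=[n,t_c]=t_{n(c)}t_c^{-1}\in N$. From here the paper stays in $M$ and argues directly: by Thurston's construction $m$ is pseudo-Anosov on the subsurface $S_1$ filled by $\{c,n(c)\}$, so a curve is fixed by $m$ exactly when it misses $\partial S_1$; conjugating $m$ by a Dehn twist about a curve crossing $\partial S_1$ produces a second element of $N$ that moves $\partial S_1$ and therefore cannot commute with $m$. You instead embed $m$ into a finite-type group $\mcg(\Sigma,\partial\Sigma)$ and invoke the structural fact that such groups, modulo center, have no nontrivial normal abelian subgroups. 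Your route trades an explicit hands-on construction for a heavier but standard black box; the paper's is more self-contained, yours is shorter once the machinery is granted.

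One point to fix: you take $\Sigma$ of genus at least three, but the lemma is stated for arbitrary infinite-type $M$ (and in the paper it is applied to the large subgroup $G'<\mcg(M')$, where $M'$ may well be planar), so this choice of $\Sigma$ is not always available. The remedy is immediate---replace ``genus at least three'' by ``sufficiently large complexity,'' achieved by taking $\Sigma$ with enough boundary components; the relevant fact about $\mcg(\Sigma,\partial\Sigma)$ modulo its center holds for planar $\Sigma$ just as well. A smaller imprecision: acylindrically hyperbolic groups can have a nontrivial finite radical, so ``no nontrivial normal amenable subgroups'' is slightly too strong as stated; your conclusion is unaffected since $m$ has infinite order modulo the center (it is either a nontrivial multitwist about interior curves or, when $i(c,n(c))>0$, pseudo-Anosov on the subsurface they fill).
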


\begin{proof}
Let \( H \) be a nontrivial normal subgroup of \( G \). 
If \( h \in H \) is nontrivial, then there exists a curve \( a \) such that \( h(a) \neq a \). 
Let \( g_1 = [t_a, h] \), so
\[ g_1 = t_a \, h \, t_a^{-1} \, h^{-1} =  (t_a \, h \, t_a^{-1}) \, h^{-1} = t_a \, (h \,  t_a^{-1} \, h^{-1}) = t_a \, t_{h(a)}^{-1}. \]
The second equality implies that \( g_1 \in H \), and the last equality guarantees that \( g_1 \) is nontrivial, as \( h(a) \neq a \).
Let \( S_1 \) be the surface filled by a realization of \( \{ a, h(a) \} \); note that \( S_1 \) has finite type and contains the support of a representative of \( g_1 \). 
Thurston's construction (see \cite[Theorem~14.1]{Primer}) implies that \( g_1 \) is the image of a pseudo-Anosov element under the homomorphism \( \mcg(S_1, \partial S_1) \to G \) induced by the embedding \( S_1 \hookrightarrow M \) (if \( a \) and \( h(a) \) are disjoint, then \( S_1 \) is a union of disjoint annuli implying that \( g_1 \) is a multitwist, which we will also consider pseudo-Anosov in this disjoint union of annuli). 
It follows that if a curve \( c \) has nontrivial geometric intersection with \( \partial S_1 \), then \( g_1(c) \neq c \); in particular, \( g_1(c) = c \) if and only if \( i(c, \partial S) = 0 \). 

Let \( d \) be the isotopy class of a component of \( \partial S_1 \), and choose a curve \( c \) such that \( i(c,d)>0 \). 
Set \(  g_2 := t_c \, g_1 \, t_c^{-1} \in H \), and let \( S_2 \) be the image of \( S_1 \) under a representative of \( t_c \). 
Then \( g_2 \) is the image of a pseudo-Anosov element of \( \mcg(S_2, \partial S_2) \) under the homomorphism into \( \mcg(M) \) and \( i(d, \partial S_2) \neq 0 \). 
It follows that \( g_2(d) \neq d \) and \( g_1(d) = d \).
Therefore, \( g_1 \) and \( g_2 \) do not commute; indeed, if they commuted, then \( g_2 \) would preserve the fix set of \( g_1 \), which  is not the case.  
We can now conclude that \( H \) is not abelian, and the result follows, as \( H \) was an arbitrary nontrivial normal subgroup of \( G \). 
\end{proof}

We can now proof Theorem~\ref{thm:main}.

\begin{proof}[Proof of Theorem~\ref{thm:main}]
Let \( \varphi \co G \to G' \) be a continuous epimorphism. 
If \( \Gamma_c(M) \) is contained in the kernel of \( \vp \), then by continuity, \( \Gamma(M) \) is in the kernel of \( \vp \).
In this case,  \( \vp \) factors through \( G/\Gamma(M) \). 
By \cite{AramayonaFirst}, \( \pmcg(M)/ \Gamma(M) \) is abelian, so the image of \( (G \cap \pmcg(M)) / \Gamma(M) \) is an abelian normal subgroup of \( G' \) and hence is trivial by Lemma~\ref{lem:abelian}. 
But, as \( G \) is mostly pure, it follows that the image of \( \vp \) is countable, and hence cannot surject onto \( G' \), a contradiction.
Therefore, \( \Gamma_c(M) \) is not contained in the kernel of \( \varphi \).
By Proposition~\ref{prop:theproof}, \( \varphi \) is induced by a homeomorphism. 
\end{proof}

\subsection{Moving to the full mapping class group}
\label{sec:full}

If \( M \) has infinitely many ends, then \( \mcg(M) \) is not mostly pure, and hence, Theorem~\ref{thm:main} does not apply. 
The place where the proof is incomplete in this case is in establishing that \( \Gamma_c(M) \) is not in the kernel of the given epimorphism. 
Here we provide a strategy for dealing with this that works in some cases, but it is not clear if it can be adapted to work in all cases. 
To exhibit the technique, we extend Theorem~\ref{thm:main} to \( \mcg(M) \) when \( M \) is perfectly self-similar. 

\begin{Thm}
\label{thm:main2}
Let \( M \) and \( M' \) be orientable 2-manifolds, and suppose \( M \) is infinite genus with no planar ends. 
If \( M \) is perfectly self-similar and \( G' \) is a large subgroup of \( \mcg(M') \), then every continuous epimorphism from \( \mcg(M) \) to \( G' \) is induced by a homeomorphism. 
\end{Thm}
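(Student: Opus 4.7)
The plan is to mirror the proof of Theorem~\ref{thm:main} and ultimately apply Proposition~\ref{prop:theproof}. Given a continuous epimorphism \( \varphi \co \mcg(M) \to G' \), it suffices to show that \( \Gamma_c(M) \not\subset \ker \varphi \). I would argue this by contradiction, supposing \( \Gamma_c(M) \subset \ker \varphi \).

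By continuity, the assumption upgrades to \( \Gamma(M) \subset \ker \varphi \), so \( \varphi \) factors through \( \mcg(M)/\Gamma(M) \). As in the proof of Theorem~\ref{thm:main}, the quotient \( \pmcg(M)/\Gamma(M) \) is abelian by \cite{AramayonaFirst}, so \( \varphi(\pmcg(M)) \) is a normal abelian subgroup of \( G' \) and hence trivial by Lemma~\ref{lem:abelian}. Therefore \( \pmcg(M) \subset \ker \varphi \), and \( \varphi \) descends further to a continuous epimorphism \( \mcg(M)/\pmcg(M) \to G' \). Since \( M \) has no planar ends, the forgetful map \( \mcg(M) \to \Homeo(\Ends(M)) \) is surjective with kernel \( \pmcg(M) \), identifying \( \mcg(M)/\pmcg(M) \) with \( \Homeo(\Ends(M)) \). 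The problem is thus reduced to showing that no continuous epimorphism \( \Homeo(\Ends(M)) \to G' \) exists.

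This last step is the main obstacle, and it is precisely where perfect self-similarity must intervene; my plan is to adapt the strategy of Afton-Calegari-Chen-Lyman \cite{AftonNielsen}. Since \( G' \) is large it contains a non-separating Dehn twist \( t \), and any putative continuous epimorphism \( \Homeo(\Ends(M)) \to G' \) would have to hit \( t \). Perfect self-similarity provides, for every clopen partition \( \Ends(M) = E_1 \sqcup \cdots \sqcup E_n \) into pieces each homeomorphic to \( \Ends(M) \), an embedded family of \( n \) pairwise commuting copies of \( \Homeo(\Ends(M)) \), each supported on a single \( E_i \) and permuted by elements of \( \Homeo(\Ends(M)) \) that interchange the \( E_i \). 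Under any such epimorphism these would descend to a large family of pairwise commuting, pairwise conjugate, nontrivial subgroups of \( G' \). The plan is then to play these conjugate copies against the geometric constraints on commuting subgroups inside a large subgroup of a mapping class group—in particular, against the centralizer structure of Dehn twists and the disjointness forced on their supports—to reach a contradiction. Once this obstruction is secured, \( \Gamma_c(M) \not\subset \ker \varphi \) and Proposition~\ref{prop:theproof} produces the homeomorphism inducing \( \varphi \).
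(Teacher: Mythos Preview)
Your reduction to a continuous epimorphism \( \bar\vp \co \Homeo(\Ends(M)) \to G' \) is correct and matches the paper exactly.  The gap is in the last step.  Your proposed obstruction---producing, for each \( n \), a family of \( n \) pairwise commuting, pairwise conjugate subgroups of \( G' \)---does not yield a contradiction.  First, you have not justified that the images \( \bar\vp(H_i) \) are nontrivial: if \( \bar\vp \) kills the subgroup supported on one clopen piece \( E_1 \), you would need to know that this subgroup normally generates \( \Homeo(\Ends(M)) \) to conclude \( \bar\vp \) is trivial, and that is essentially the content of the lemma you are trying to avoid.  Second, even if the images are nontrivial, large subgroups of mapping class groups happily accommodate arbitrarily large (even infinite) families of pairwise commuting, pairwise conjugate nontrivial subgroups---for instance, the cyclic groups generated by Dehn twists about an infinite locally finite non-separating multicurve.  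So ``playing against centralizer structure'' does not produce a contradiction without a much more specific argument, and your sketch does not supply one.  Note also that your final step never invokes continuity of \( \bar\vp \), which is a warning sign.

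The paper's argument for this step is quite different and uses exactly the result of Afton--Calegari--Chen--Lyman you cite, but not in the way you suggest.  Their Proposition~9 (Proposition~\ref{prop:afton} here) says that \( \mcg(M') \) has an identity neighborhood containing no nontrivial torsion.  Perfect self-similarity is then used (Lemma~\ref{lem:involutions}) to produce a sequence of involutions \( \tau_n \in \Homeo(\Ends(M)) \) converging to the identity, each of which \emph{normally generates} \( \Homeo(\Ends(M)) \).  Continuity forces \( \bar\vp(\tau_n) \to \mathrm{id} \), the no-small-torsion result forces \( \bar\vp(\tau_n) = \mathrm{id} \) for large \( n \), and normal generation then forces \( \bar\vp \) to be trivial.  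This is where continuity and perfect self-similarity actually interact; your commuting-copies picture does not capture it.
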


The main tool in proving Theorem~\ref{thm:main2} is a result of Afton--Calegari--Chen--Lyman.

\begin{Prop}[\!{\cite[Proposition~9]{AftonNielsen}}]
\label{prop:afton}
If \( M \) is an orientable 2-manifold, then there exists an open neighborhood of the identity in \( \mcg(M) \) such that the identity is the  only torsion element in the neighborhood.
\qed
\end{Prop}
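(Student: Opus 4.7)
The goal is to produce a specific basic open neighborhood of the identity in \( \mcg(M) \) that contains no non-trivial torsion. Recall that a neighborhood basis of the identity consists of the sets \( U_A = \{ f \in \mcg(M) : f(a)=a \text{ for all } a \in A\} \) indexed by finite sets of curves \( A \); the plan is to construct one such \( A \) for which any finite-order \( f \in U_A \) is forced to be the identity.

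First, I would fix a finite-type subsurface \( \Sigma \subset M \) of genus at least three with every component of \( \partial \Sigma \) an essential curve, and let \( A \) consist of the boundary curves of \( \Sigma \) together with a finite collection of curves in \( \Sigma \) forming an Alexander system for \( \mcg(\Sigma, \partial \Sigma) \); such a finite system exists by the finite-type Alexander method. Any \( f \in U_A \) then preserves \( \Sigma \) setwise, and its restriction \( f|_\Sigma \in \mcg(\Sigma, \partial \Sigma) \) fixes every curve in \( A \) and hence is trivial. Consequently \( f \) admits a representative equal to the identity on \( \Sigma \), which restricts on each component \( C \) of \( M \ssm \mathrm{int}(\Sigma) \) to a homeomorphism fixing \( \partial C \) pointwise; in particular, \( f \) lies in the image of the natural map \( \prod_{C} \mcg(C,\partial C) \to \mcg(M) \) extending by the identity on \( \Sigma \).

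To finish, assume additionally that \( f \) has finite order \( n \geq 2 \). For each component \( C \), the element \( f|_C \in \mcg(C,\partial C) \) has order dividing \( n \), so it suffices to prove that \( \mcg(C, \partial C) \) is torsion-free whenever \( \partial C \) is non-empty and compact. For finite-type \( C \) this is Corollary 7.3 of the Primer, proved by realizing finite-order classes as finite-order homeomorphisms and invoking the fact that a non-trivial finite-order orientation-preserving homeomorphism of a 2-manifold has a discrete fixed-point set, and so cannot pointwise fix a boundary circle. The main obstacle is extending this torsion-freeness to the infinite-type components \( C \); my plan is to realize \( f|_C \) by a finite-order homeomorphism \( \phi \) fixing \( \partial C \) pointwise, via an exhaustion argument that saturates a finite-type exhaustion of \( C \) containing \( \partial C \) by the orbit of a candidate \( \phi \) and applies the finite-type result at each stage, after which the discrete-fixed-set dichotomy forces \( \phi = \mathrm{id} \) and hence \( f = \mathrm{id} \) in \( \mcg(M) \).
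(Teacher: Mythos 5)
The paper itself offers no proof of this proposition---it is imported verbatim from \cite{AftonNielsen} with a \(\square\)---so I can only assess your argument on its own terms. Your reduction is reasonable in outline, with some fixable imprecisions: \(f|_\Sigma\) is a priori only an element of \(\mcg(\Sigma)\) with free boundary (not of \(\mcg(\Sigma,\partial\Sigma)\), and it could be a product of boundary twists, which is harmless for your purposes but should be said); you need \(\partial\Sigma\) essential and pairwise non-isotopic in \(M\) for \(\prod_C \mcg(C,\partial C)\to\mcg(M)\) to be injective, which is what actually transports finite order to the factors; and a genus-three subsurface does not exist when \(M\) is planar, so low-complexity and finite-type cases need separate (easy) treatment. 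The genuine gap is the step you yourself flag: torsion-freeness of \(\mcg(C,\partial C)\) for an infinite-type component \(C\). That statement is essentially Nielsen realization for infinite-type surfaces rel boundary, i.e.\ the main theorem of the very paper being cited, and your exhaustion sketch does not engage with its actual difficulties. Concretely: \(f|_C\) does not preserve any finite-type piece \(C_k\) of an exhaustion; after saturating \(C_k\) by the \(f\)-orbit so that \(f\) preserves it up to isotopy, \(f\) does not fix the newly created boundary pointwise, so its restriction to the saturated piece is well-defined only modulo twists about those boundary curves, and the relation \(f^n=1\) only forces that restriction to be a \emph{root of a multitwist}, not a finite-order element---so the finite-type torsion-freeness result does not apply at each stage. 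Moreover, finite-order realizations produced stage-by-stage need not be compatible across the exhaustion; arranging that compatibility is precisely the hard content of \cite{AftonNielsen}. As written, your argument is therefore circular in difficulty.

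There is a way to close the gap that keeps all realization input in finite type, and which avoids cutting rel boundary altogether: given finite-order \(f\in U_A\) of order \(n\) and an arbitrary curve \(c\), pass to an essential finite-type subsurface \(\Sigma'\supset\Sigma\) containing the \(f\)-orbit of \(c\) and preserved by \(f\) up to isotopy, and reduce \(f\) to an element \(f'\) of \(\mcg(\Sigma')\) with \emph{free} boundary. There the boundary-parallel twists are trivial, so the reduction is well defined and \((f')^n=1\) genuinely holds, while \(f'\) still fixes the filling system of curves on \(\Sigma\). Realizing \(f'\) by an isometry of a hyperbolic structure on \(\Sigma'\) (finite-type Nielsen realization, classical in the cyclic case) and observing that an orientation-preserving finite-order isometry fixing setwise the geodesic realization of a filling system of an essential subsurface must be the identity gives \(f'=\mathrm{id}\), hence \(f(c)=c\); since \(c\) was arbitrary, Proposition~\ref{prop:alexander-method} gives \(f=\mathrm{id}\). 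I recommend restructuring your final step along these lines rather than attempting infinite-type realization rel boundary.
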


The idea behind the proof of Theorem~\ref{thm:main2} is that if \( \Gamma_c(M) \) is in the kernel of an epimorphism, then there must be torsion elements arbitrarily close to the identity that normally generate \( G' \), contradicting Proposition~\ref{prop:afton}. 
The existence of these generators come from the following lemma. 

\begin{Lem}
\label{lem:involutions}
If \( E \) is the end space of a perfectly self-similar 2-manifold, then every neighborhood of the identity in \( \Homeo(E) \) contains an involution that normally generates \( \Homeo(E) \). 
\end{Lem}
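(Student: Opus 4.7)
The plan is to use a standard neighborhood basis of the identity in $\Homeo(E)$, to construct an explicit involution in each such neighborhood using the perfect self-similarity of $M$, and then to verify that this involution normally generates.

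Since $E$ is compact and totally disconnected, a basis of neighborhoods of the identity in $\Homeo(E)$ is given by the sets $\sW(\mathcal{U}) = \{f \in \Homeo(E) : f(U) = U \text{ for each } U \in \mathcal{U}\}$, as $\mathcal{U}$ ranges over finite clopen partitions of $E$. Fix such a partition $\mathcal{U} = \{U_1, \ldots, U_n\}$. The perfect self-similarity of $M$ transfers to strong properties of $E$: the relation $M \# M \cong M$ gives $E \sqcup E \cong E$, while the displacement condition on compacta in $M$ forces enough homogeneity that at least one $U_i$ contains two disjoint clopen subsets $V_1$ and $V_2$, each homeomorphic to $E$. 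Fixing a homeomorphism $\phi \colon V_1 \to V_2$, I would define $\tau \in \Homeo(E)$ by $\tau|_{V_1} = \phi$, $\tau|_{V_2} = \phi^{-1}$, and $\tau|_{E \setminus (V_1 \sqcup V_2)} = \mathrm{id}$. Then $\tau$ is an involution with support inside $U_i$, so $\tau \in \sW(\mathcal{U})$.

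To see that $\tau$ normally generates $\Homeo(E)$, I would first show that any other involution $\sigma \in \Homeo(E)$ of the same form---swapping two disjoint clopen $E$-copies via some homeomorphism and fixing the complement---lies in the normal closure of $\tau$. The conjugating element is produced by separately matching the pairs of $E$-copies and the complements, the required homeomorphisms existing because any two clopen $E$-copies are homeomorphic and their complements are also homeomorphic (since $E \sqcup E \cong E$ forces the complement of any clopen $E$-copy to be again homeomorphic to $E$). I would then argue that such swap involutions generate $\Homeo(E)$ by an Anderson-style fragmentation argument: any $h \in \Homeo(E)$ permutes a sufficiently fine clopen partition, and by iteratively splitting each part into a pair of clopen $E$-copies using the self-similar decomposition $E \sqcup E \cong E$, the action of $h$ decomposes into a finite product of such swaps.

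The main obstacle is this generation step. In the special case when $E$ is a Cantor set---which includes the blooming Cantor tree surface---it follows immediately from Anderson's theorem on the simplicity of $\Homeo(\text{Cantor set})$. For other perfectly self-similar end spaces, one must adapt Anderson's fragmentation argument carefully, leveraging $E \sqcup E \cong E$ together with the displacement-driven homogeneity of $E$ to realize the needed pairs of clopen $E$-copies inside each part of an arbitrary clopen refinement, and thereby expressing every homeomorphism of $E$ as a finite product of the basic swap involutions.
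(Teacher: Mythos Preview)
Your construction of swap involutions with small support is essentially the paper's approach. The paper, however, does not attempt a self-contained proof of normal generation: it cites two results from \cite{VlamisHomeomorphism}, one giving a decomposition \( E \smallsetminus \{x\} = \bigcup_{n\in\bn} E_n \) into pairwise-homeomorphic disjoint clopen sets (so the involution \( \tau_n \) swapping \( E_n \) with \( E_{n+1} \) and fixing the rest has support shrinking to \( x \)), and another asserting directly that each such \( \tau_n \) normally generates \( \Homeo(E) \).

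Your self-contained route to normal generation has a genuine gap. The fragmentation claim---that any \( h \in \Homeo(E) \) permutes a sufficiently fine clopen partition---is false even for the Cantor set: the two-sided shift on \( \{0,1\}^{\bz} \) permutes no nontrivial finite clopen partition, since every power of the shift is topologically transitive and hence has no proper invariant clopen set. Anderson's simplicity proof does not proceed this way; it uses a commutator trick together with transitivity of the homeomorphism group on proper nonempty clopen subsets. Adapting that argument to a general perfectly self-similar \( E \) requires precisely the structure theory you are trying to sidestep: you need to know which clopen subsets of \( E \) are homeomorphic to \( E \), and your assertion that the complement of any clopen \( E \)-copy is again homeomorphic to \( E \) is neither justified nor obvious outside the Cantor case. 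The obstacle you flag is real, and the paper resolves it by citation rather than by direct argument.
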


\begin{proof}
By \cite[Proposition~5.4.6]{VlamisHomeomorphism}, there exists \( x \in E \) and a sequence \( \{ E_n \}_{n\in\bn} \) of pairwise-homeomorphic and disjoint clopen subsets of \( E \) such that \( E \ssm \{x\} = \bigcup_{n\in\bn} E_n \). 
From this decomposition, for each \( n \in \bn \), we can find a involution \( \tau_n \co E \to E \) such that \( \tau_n(E_n) = E_{n+1} \) and \( \tau_n \) restricts to the identity on \( E_k \) for \( k \neq n, n+1 \). 
Observe that \( \lim_{n\to \infty} \tau_n \) is the identity. 
Moreover, \cite[Corollary~5.2.3]{VlamisHomeomorphism} implies that \( \tau_n \) is a normal generator for \( \Homeo(E) \). 
\end{proof}

\begin{proof}[Proof of Theorem~\ref{thm:main2}]
Let \( \varphi \co \mcg(M) \to G' \) be a continuous epimorphism, and suppose that \( \Gamma_c(M) \) is contained in \( \ker \vp \). 
Arguing as in the proof of Theorem~\ref{thm:main}, it follows that \( \vp \) induces an epimorphism \( \mcg(M)/\pmcg(M) \to G' \).
Richards's proof of the classification of surfaces \cite{RichardsClassification}, together with the fact that all the ends are nonplanar, implies that \( \mcg(M)/\pmcg(M) \) is isomorphic to \( \Homeo(\Ends(M)) \), where \( \Ends(M) \) is the end space of \( M \). 
Let \( \bar\vp \) denote the induced epimorphism \( \Homeo(\Ends(M)) \to G' \). 
By Lemma~\ref{lem:involutions}, there exists a sequence of involutions \( \{\tau_n\}_{n\in\bn} \) in \( \Homeo(\Ends(M)) \) that limit to the identity and such that each involution normally generates \( \Homeo(\Ends(M)) \).
But Proposition~\ref{prop:afton} implies that \( \bar\vp(\tau_n) \) must be the identity for all large \( n \).
As the \( \tau_n \) are normal generators, \( \bar\vp \) is trivial, a contradiction.
Therefore, \( \Gamma_c(M) \) is not in the kernel of \( \vp \), and hence \( \vp \) is induced by a homeomorphism by Proposition~\ref{prop:theproof}.
\end{proof}

The argument in the proof of Theorem~\ref{thm:main2} works for a larger class of 2-manifolds; however, it is not clear how to naturally define the class.
For example, we can extend to another class of self-similar 2-manifolds using \cite{BhatAlgebraic}.
In particular, replacing the use of \cite[Proposition~5.4.6]{VlamisHomeomorphism} and \cite[Corollary~5.2.3]{VlamisHomeomorphism} in Lemma~\ref{lem:involutions} with \cite[Proposition~2.22]{BhatAlgebraic} and \cite[Theorem~3.12]{BhatAlgebraic}, respectively, then the following theorem has the same proof as Theorem~\ref{thm:main2}.

\begin{Thm}
Let \( M \) and \( M' \) be orientable 2-manifolds.
Suppose \( M \) is infinite genus with no planar ends and that the end space of \( M \) is countable with Cantor--Bendixson degree a successor ordinal and with Cantor--Bendixson rank one (i.e., it is homeomorphic to an ordinal of the form \( \omega^{\alpha+1} +1 \) equipped with its order topology, where \( \omega \) is the first countable ordinal and \( \alpha \) is a countable ordinal). 
If \( G' \) is a large subgroup of \( \mcg(M') \), then every continuous epimorphism from \( \mcg(M) \) to \( G' \) is induced by a homeomorphism.
\qed
\end{Thm}

We do not know if the proof of Theorem~\ref{thm:main2} can be extended to cover all infinite-genus self-similar 2-manifolds with no planar ends; however, we know the strategy cannot work for all infinite-genus 2-manifolds.
For instance, consider the infinite-genus 2-manifold \( M \) with no planar ends and whose end space \( E \) is homeomorphic to \( \omega\cdot 2 + 1 \) (or equivalently, the two-point compactification \( \widehat \bz = \bz \cup \{\pm \infty \} \) of \( \bz \)).
 Then the closure of the finitely supported homeomorphisms in \( \Homeo(E) \) of \( E \), call it \( G \), is an open neighborhood of the identity and a normal subgroup of \( \Homeo(E) \) such that \( \Homeo(E) / G \) is isomorphic to \( (\bz/2\bz)^2 \) (this follows from the results in \cite[Section~4]{BhatAlgebraic}).
In particular, \( G \) is a neighborhood of the identity that cannot contain a normal generating set of \( \Homeo(E) \), implying the techniques in the proof of Theorem~\ref{thm:main2} cannot be applied to \( M \) (though, we note that this discussion provides enough information to make an argument that allows one to extend the conclusion of Theorem~\ref{thm:main2} to \( \mcg(M) \)). 



\section{Relaxing continuity}

In this final section, we weaken the continuity assumption in Theorem~\ref{thm:main}.
The idea is to replace continuity with a version of ``algebraic continuity'' that preserves certain limits that can be described in terms of infinite multiplication.
We will work with a version of \emph{infinitely multiplicative} homomorphisms---as introduced by Cannon--Conner \cite{CannonCombinatorial}---suited to our purposes. 

Let us begin by expanding our discussion of infinite products from Section~\ref{sec:prelim}.
A sequence of mapping classes \( \{ f_n \}_{n\in\bn} \) in \( \mcg(M) \) is \emph{locally finitely supported} if \( \{ n \in \bn : f_n(a) \neq a \} \) is finite for every curve \( a \) in \( M \).
If \( \{ f_n\}_{n\in\bn} \) is locally finitely supported, then we can define \( f = \prod_{n\in\bn} f_n \in \mcg(M) \) as follows: given a curve \( a \) in \( M \) there exists \( N_a \in \bn \) such that \( f_n(a) = a \) for all \( n > N_a \), allowing us to define \( f(a) = (f_1 \, f_2 \, \cdots \, f_{N_a})(a) \).
Note that, in the compact-open topology, \( f = \lim \prod_{i=1}^n f_i \), where we write the product left to right, i.e., \( \prod_{i=1}^n f_i = f_1 \, f_2 \, \cdots \, f_n \).

\begin{Def}
Let \( M \) and \( M' \) be 2-manifolds with \( M \) nonplanar, and let \( G \) and \( G' \) be subgroups of \( \mcg(M) \) and \( \mcg(M') \), respectively. 
A homomorphism \( \vp \co G \to G' \) is \emph{infinitely multiplicative on twists} if, for any locally finitely supported sequence of non-separating Dehn twists \( \{ t_n \}_{n \in \bn} \), the infinite product \( \prod_{n \in \bn} \vp(t_n) \) exists. 
\end{Def}

We only need one lemma to replace the continuity assumption in Theorem~\ref{thm:main} with requiring the epimorphisms to be infinitely multiplicative on twists. 

\begin{Lem}
\label{lem:infinite-products}
Let \( M \) be an orientable infinite-genus 2-manifold in which every planar end is isolated. 
If \( f \in \Gamma(M) \ssm \Gamma_c(M) \), then there exists a sequence \( \{t_n\}_{n\in\bn} \) of non-separating Dehn twists such that \(  \prod_{n\in\bn} t_n \) exists and is equal to \( f \).
\end{Lem}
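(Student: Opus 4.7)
The plan is to approximate $f$ by compactly supported mapping classes that agree with $f$ on larger and larger finite sets of curves, then write the successive differences as finite products of non-separating Dehn twists supported in the complement of the already-stabilized region. Concatenating these pieces will yield the required sequence.

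First, I would invoke Theorem~\ref{thm:generators} together with the remark following it (which extends the result to 2-manifolds with isolated planar ends) to fix an Alexander chain $\mathcal A$ in $M$ and a compact exhaustion $\{\Sigma_k\}_{k\in\bn}$ with every boundary component of $\Sigma_k$ essential and separating in $M$, such that $\mathcal A_k := \{a\in\mathcal A : a\subset\Sigma_k\}$ is a finite filling Alexander chain in $\Sigma_k$ whose Dehn twists generate $\mcg(\Sigma_k,\partial\Sigma_k)$ (or its pure subgroup, which already contains $\Gamma_c(M)\cap\mcg(\Sigma_k,\partial\Sigma_k)$). Setting $A_k := \mathcal A_k\cup\partial\Sigma_k$, density of $\Gamma_c(M)$ in $\Gamma(M)$ lets me choose $h_k\in\Gamma_c(M)\cap f\,U_{A_k}$ for each $k\in\bn$, so that $h_k(a)=f(a)$ for every $a\in A_k$ and $h_k\to f$. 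The compactly supported mapping class $s_k := h_k^{-1}h_{k+1}$ then fixes every curve in $A_k$.

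The key step is to show that $s_k$ admits a representative supported in $\overline{M\ssm\Sigma_k}$. Because $s_k$ fixes each component of $\partial\Sigma_k$ as an isotopy class and is compactly supported, it can be isotoped to fix $\partial\Sigma_k$ pointwise; its restriction to $\Sigma_k$ then lives in $\mcg(\Sigma_k,\partial\Sigma_k)$ and fixes every curve in the filling set $\mathcal A_k$. The finite-type Alexander method forces this restriction to be trivial, so $s_k$ may be chosen to act as the identity on $\Sigma_k$. Being compactly supported, $s_k$ is then an element of $\mcg(\overline{\Sigma_{N(k)}\ssm\Sigma_k},\partial)$ for some $N(k)>k$. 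A brief connectedness argument, using that each boundary component of this sandwich subsurface is separating in $M$, shows that every curve non-separating in $\overline{\Sigma_{N(k)}\ssm\Sigma_k}$ is also non-separating in $M$. Humphries's theorem applied to $\overline{\Sigma_{N(k)}\ssm\Sigma_k}$ then expresses $s_k$ as a finite product of Dehn twists about non-separating curves of $M$ supported in $\overline{M\ssm\Sigma_k}$. The same reasoning writes $h_1$ as a finite product of non-separating Dehn twists in $M$.

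Concatenating these finite products in the order $h_1, s_1, s_2, \ldots$ produces a sequence $\{t_n\}_{n\in\bn}$ of non-separating Dehn twists whose partial products at the end of stage $k$ equal $h_{k+1}$. Local finiteness is then automatic: given any curve $c$ in $M$, pick $K$ with $c\subset\Sigma_K$; for each $k\geq K$ every twist contributed by $s_k$ is about a curve in $\overline{M\ssm\Sigma_k}\subset\overline{M\ssm\Sigma_K}$, hence disjoint from $c$, hence fixes $c$. Only finitely many $t_n$ can therefore move $c$, so the sequence is locally finitely supported and $\prod_{n\in\bn}t_n$ is defined; since the subsequence $\{h_k\}$ of partial products converges to $f$ in the compact-open topology, this infinite product equals $f$. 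The main obstacle will be the middle paragraph---performing the Alexander-method reduction to isotope $s_k$ to the identity on $\Sigma_k$ and verifying that the Humphries generators of the sandwich subsurface yield twists about curves non-separating in all of $M$; the local-finiteness bookkeeping is then immediate from the support conditions.
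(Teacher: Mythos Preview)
Your proof is correct and follows the same strategy as the paper: approximate $f$ by a telescoping product of compactly supported increments and expand each increment into non-separating Dehn twists. The paper's argument is terser---it simply records that the $n$-th increment lies in $U_{n-1}$ and asserts the concatenated twist sequence is locally finitely supported---whereas you explicitly verify (via the Alexander method and Humphries) that each increment can be taken to be supported in $\overline{M\ssm\Sigma_k}$ and decomposed into twists about curves lying there, which is exactly the point needed to justify that final assertion.
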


\begin{proof}
Let \( \{ \Sigma_n \}_{n\in\bn} \) be an exhaustion of \( M \) by finite-type subsurfaces.
Let \( U_n \subset \mcg(M) \) be defined as follows:  \( f \in U_n \) if and only if \( f(a) = a \) for every curve \( a \) with representative contained in \( \Sigma_n \). 
Then \( \{ U_n\}_{n\in\bn} \) is a neighborhood basis for the identity in \( \mcg(M) \). 

As \( \Gamma_c(M) \) is generated by non-separating Dehn twists and is dense in \( \Gamma(M) \), there exists \( f_1 \in \Gamma_c(M) \) such that \( f_1 \in f \, U_1 \) and such that \( f_1 \) is a finite product of non-separating Dehn twists. 
Therefore, \( f_1^{-1} \, f \in U_1 \). 
Arguing as above, there exists \( f_2 \in f_1^{-1} \, f \, U_2 \) such that \( f_2 \) is a finite product of non-separating Dehn twists.  
Moreover, \( f_2 \in U_1 \).
Proceeding recursively, we obtain a sequence of mapping class \( \{ f_n \}_{n\in\bn} \) such that
\begin{itemize}
\item \( f_{n+1} \in U_n \),
\item \( f_n^{-1} \, \cdots \, f_1^{-1} f \in U_n  \), and
\item \( f_n \) is a finite product of non-separating Dehn twists. 
\end{itemize}
It follows that \( \{ f_n \} \) is locally finitely supported,  \( \lim f_n = f \), and \( f = \prod_{n \in \bn } t_n \) for some locally finitely supported sequence of non-separating Dehn twists \( \{ t_n \}_{n\in\bn} \).  
\end{proof}

Tracing through the various results in the prior sections, one readily checks that every instance of continuity can be replaced with infinite multiplicativity on twists using Lemma~\ref{lem:infinite-products}, which yields a more algebraic version of the main theorem.

\begin{Thm}
\label{thm:main3}
Let \( M \) and \( M' \) be orientable 2-manifolds and suppose \( M \) is infinite genus and has no planar ends.
If \( G \) is a mostly pure large subgroup of \( \mcg(M) \) and \( G' < \mcg(M') \) is large, then every epimorphism \( G \to G' \) that is infinitely multiplicative on twists is induced by a homeomorphism.
\qed
\end{Thm}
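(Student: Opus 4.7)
The plan is to retrace the proof of Theorem~\ref{thm:main} and verify that every appeal to continuity can be replaced by the hypothesis that $\vp$ is infinitely multiplicative on twists. The bridging tool is Lemma~\ref{lem:infinite-products}: any $f \in \Gamma(M)$ admits a presentation $f = \prod_{n\in\bn} t_n$ as a locally finitely supported infinite product of non-separating Dehn twists. Given an epimorphism $\vp \co G \to G'$ that is infinitely multiplicative on twists, this forces $\vp(f) = \prod_{n\in\bn} \vp(t_n)$, since the right-hand side exists by hypothesis and $\vp$ is a homomorphism that respects the tail of the product.

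Next, I would audit the lemmas of Section~3 for uses of continuity. Each use has the form ``a property established on $\Gamma_c(M)$ passes to $\Gamma(M)$ via a convergent sequence'': in Lemma~\ref{lem:infinite-order} to argue that $\vp(t)=1$ forces $\vp\equiv 1$; in the final contradiction of Lemma~\ref{lem:disjoint1}, where $\{\vp(F_n)\}$ must converge because $\{F_n\}$ does; in Lemma~\ref{lem:locally_finite}, where the limit of the images of partial multitwists must exist; in the filling step of Proposition~\ref{prop:twist2twist}, where an arc fixed by $\vp(\Gamma_c(M))$ must be fixed by $\vp(\Gamma(M))$; and at the end of the proof of Theorem~\ref{thm:compactly-supported}, where $\vp$ and $h_*$ must be matched on all of $\Gamma(M)$, given agreement on $\Gamma_c(M)$. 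In every case, Lemma~\ref{lem:infinite-products} combined with infinite multiplicativity on twists supplies the desired extension or convergence. The outcome of this audit is an algebraic analog of Theorem~\ref{thm:compactly-supported}: every compact irreducible homomorphism $\Gamma(M) \to \Gamma(M')$ that is infinitely multiplicative on twists is induced by a homeomorphism.

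With this strengthened version of Theorem~\ref{thm:compactly-supported} in hand, the proofs of Proposition~\ref{prop:theproof} and Theorem~\ref{thm:main} require only two further appeals to continuity: promoting $\vp(\Gamma_c(M)) \subset \Gamma_c(M')$ to $\vp(\Gamma(M)) \subset \Gamma(M')$, and promoting $\Gamma_c(M) \subset \ker\vp$ to $\Gamma(M) \subset \ker\vp$. Both are handled by the same mechanism: rewrite $f \in \Gamma(M)$ as $\prod t_n$, apply $\vp$, and observe that $\prod \vp(t_n)$ lies in $\Gamma(M')$ (respectively, equals the identity) because $\Gamma(M')$ is closed under such infinite products and contains each $\vp(t_n)$. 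The remaining ingredients, namely Proposition~\ref{prop:BDR} and Lemma~\ref{lem:abelian}, are purely algebraic and carry over verbatim. I do not expect any single step to present a genuine obstacle; the task is essentially careful bookkeeping. The main subtlety is checking that at each continuity step the underlying convergent sequence can be realized as $\prod_{i=1}^{n} t_i$ for non-separating Dehn twists $t_i$, which is precisely what Lemma~\ref{lem:infinite-products} is designed to provide.
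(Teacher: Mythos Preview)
Your proposal is correct and matches the paper's approach exactly: the paper's entire proof of Theorem~\ref{thm:main3} is the single sentence ``tracing through the various results in the prior sections, one readily checks that every instance of continuity can be replaced with infinite multiplicativity on twists using Lemma~\ref{lem:infinite-products},'' followed by a \qed. Your audit of the specific continuity uses (Lemmas~\ref{lem:infinite-order}, \ref{lem:disjoint1}, \ref{lem:locally_finite}, Proposition~\ref{prop:twist2twist}, Theorem~\ref{thm:compactly-supported}, Proposition~\ref{prop:theproof}, and Theorem~\ref{thm:main}) is precisely the bookkeeping the paper leaves to the reader.
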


\bibliographystyle{amsplain}
\bibliography{references}

\end{document}